\title{Critical exponent for nonlinear wave equations with frictional and viscoelastic damping terms}
\author{
Ryo Ikehata\thanks{ikehatar@hiroshima-u.ac.jp},\\
Department of Mathematics, \\
Graduate School of Education, Hiroshima University\\
Higashi-Hiroshima 739-8524, Japan \\
\ \\
Hiroshi Takeda\thanks{Corresponding author: h-takeda@fit.ac.jp},\\
Department of Intelligent Mechanical Engineering, \\
Faculty of Engineering, Fukuoka Institute of Technology, \\
3-30-1 Wajiro-higashi, Higashi-ku, Fukuoka, 811-0295 JAPAN 
}
\date{}
\newcommand{\R}{\mathbb R}
\newcommand{\K}{\mathcal{K}}
\newcommand{\supp}{\mathop{\mathrm{supp}}\nolimits}
\newtheorem{thm}{Theorem}[section]
\newtheorem{prop}[thm]{Proposition}
\newtheorem{lem}[thm]{Lemma}
\theoremstyle{remark}
\newtheorem{rem}[thm]{Remark}
\theoremstyle{definition}
\begin{document}
\maketitle

\numberwithin{equation}{section}
\begin{abstract}
In this paper,  we study the Cauchy problem for a nonlinear wave equation with 
frictional and viscoelastic damping terms.
Our aim is to obtain the threshold, to classify the global existence of solution for small data or 
the finite time blow-up pf the solution, 
with respect to the growth order of the nonlinearity.  
\end{abstract}

\noindent
\textbf{Keywords: }nonlinear wave equation, frictional damping, viscoelastic damping, the Cauchy problem, critical exponent, asymptotic profile,\\
\noindent
\textbf{2010 Mathematics Subject Classification.} Primary 35L15, 35L05; Secondary 35B40
\newpage
\section{Introduction}

In this paper we consider the Cauchy problem for a wave equation with two types of damping terms 
\begin{equation} \label{eq:1.1}
\left\{
\begin{split}
& \partial_{t}^{2} u -\Delta u + \partial_{t} u -\Delta \partial_{t} u =f(u), \quad t>0, \quad x \in \R^{n}, \\
& u(0,x)=u_{0}(x), \quad \partial_{t} u(0,x)=u_{1}(x) , \quad x \in \R^{n}, 
\end{split}
\right.
\end{equation}
where $u_{0}(x)$ and $u_{1}(x)$ are given functions.\\

After A. Matsumura \cite{Ma} has established a pioneering basic decay estimates to the linear equation
\begin{equation} \label{eq:1.1.0}
\left\{
\begin{split}
& \partial_{t}^{2} u -\Delta u + \partial_{t} u = 0, \quad t>0, \quad x \in \R^{n}, \\
& u(0,x)=u_{0}(x), \quad \partial_{t} u(0,x)=u_{1}(x) , \quad x \in \R^{n}, 
\end{split}
\right.
\end{equation}
many mathematicians have concentrated on solving a typical important nonlinear problem of the semi-linear damped wave equation 
\begin{equation} \label{eq:1.1.1}
\left\{
\begin{split}
& \partial_{t}^{2} u -\Delta u + \partial_{t} u = \vert u\vert^{p}, \quad t>0, \quad x \in \R^{n}, \\
& u(0,x)=u_{0}(x), \quad \partial_{t} u(0,x)=u_{1}(x) , \quad x \in \R^{n}, 
\end{split}
\right.
\end{equation}
and at that case we necessarily remind of the Fujita critical exponent. That is, there exists a real number $p_{F} \in (1,\infty)$ such that if $p > p_{F}$, then the corresponding Cauchy problem (1.3) has a small global in time solution $u(t,x)$ for small initial data $[u_{0},u_{1}]$, while in the case when $p \in (1,p_{F}]$, the corresponding problem does not have any nontrivial global solutions. The number $p_{F}$ is called as the Fujita critical exponent, and nowadays it is well-known that $p_{F} = 1+ \displaystyle{\frac{2}{n}}$. Even if we restricted to the Cauchy problem case in ${\R^{n}}$, one can cite so many related research papers due to \cite{HKN}, \cite{H}, \cite{HO}, \cite{Ik-3}, \cite{IMN}, \cite{IT}, \cite{K}, \cite{KU}, \cite{MK}, \cite{MN}, \cite{Na}, \cite{N-2}, \cite{T}, \cite{TY}, \cite{Z} and the references therein. It should be emphasized that the first success to find out the Fujita number in a complete style for all $n \geq 1$ is in the work due to Todorova-Yordanov \cite{TY}. Anyway, these results are based on an important recognition that the asymptotic profile as $t \to +\infty$ of the solution of the linear equation (1.2) is a constant multiple of the Gauss kernel or a solution of the corresponding heat equation with an appropriate initial data. This type of diffusion phenomenon is discussed in \cite{CH}, \cite{DE}, \cite{DR}, \cite{HM}, \cite{HO}, \cite{Ik-1}, \cite{IN}, \cite{K}, \cite{KU}, \cite{MN}, \cite{Na}, \cite{N}, \cite{N-2}, \cite{RTY}, \cite{said}, \cite{SW}, \cite{T-2}, \cite{W} and the references therein. \\

On the other hand, if the problem \eqref{eq:1.1.1} is replaced by the following strongly damped wave equation case,
\begin{equation} \label{eq:1.1.2}
\left\{
\begin{split}
& \partial_{t}^{2} u -\Delta u -\Delta\partial_{t} u = \mu f(u), \quad t>0, \quad x \in \R^{n}, \\
& u(0,x)=u_{0}(x), \quad \partial_{t} u(0,x)=u_{1}(x) , \quad x \in \R^{n}, 
\end{split}
\right.
\end{equation}
there seems not so many related research papers at present. In the case when the linear equations are concerned with $\mu = 0$ in \eqref{eq:1.1.2}, one has two pioneering papers due to Ponce \cite{Ponce} and Shibata \cite{Shibata}, in which they studied $L^{p}-L^{q}$ decay estimates of the solutions to \eqref{eq:1.1.2} with $\mu = 0$. Quite recently, Ikehata-Todorova-Yordanov \cite{ITY} and Ikehata \cite{Ik-4, IR} have caught an asymptotic profile of solutions to problem \eqref{eq:1.1.2} with $\mu = 0$, and in fact, its profile is so called the diffusion waves, which are well-studied in the field of the Navier-Stokes equation case. In this case, an oscillation property occurs in the low frequency region, while in the usual frictional damping case \eqref{eq:1.1.0} one cannot observe any such oscillation properties. There is a big difference between \eqref{eq:1.1.0} and \eqref{eq:1.1.2} with $\mu = 0$. In connection with this,  several decay estimates for wave equations with structural damping, which interpolate \eqref{eq:1.1.1} and \eqref{eq:1.1.2} are extensively studied to the equation 
\begin{equation} \label{eq:1.1.3}
\partial_{t}^{2} u -\Delta u + b(t)(-\Delta)^{\theta}\partial_{t} u = \mu f(u),
\end{equation}
where $\theta \in [0,1]$ and $\mu \geq 0$ in the papers due to \cite{CLI}, \cite{DE}, \cite{DEP}, \cite{DR}, \cite{INatsume}, \cite{LIC}, \cite{LR}, \cite[$\mu = 0$, $b(t) \equiv 1$, $\theta = 0$, exterior domain case]{racke}, \cite[$\mu= 0$, $\theta = 0$]{JW}, and the references therein. At this stage, if one considers the original problem \eqref{eq:1.1}, which has two types of damping terms, a natural question arises that which is dominant as $t \to +\infty$, frictional damping or viscoelastic one?\, About this fundamental question, quite recently Ikehata-Sawada \cite{IS} have studied the problem  
\begin{equation} \label{eq:1.1.4}
\left\{
\begin{split}
& \partial_{t}^{2} u -\Delta u + \partial_{t} u -\Delta \partial_{t} u = 0, \quad t>0, \quad x \in \R^{n}, \\
& u(0,x)=u_{0}(x), \quad \partial_{t} u(0,x)=u_{1}(x) , \quad x \in \R^{n}, 
\end{split}
\right.
\end{equation}
and have shown that $u(t,x) \sim (P_{0}+P_{1})G_{t}(x)$ as $t \to +\infty$ in the $L_{x}^{2}$-sense, where $P_{j} := \displaystyle{\int_{{\R^{n}}}}u_{j}(x)dx$ ($j = 0,1$), and $G_{t}(x) = \displaystyle{\frac{1}{(\sqrt{4\pi t})^{n}}}e^{-\frac{\vert x\vert^{2}}{4t}}$ is the usual $n$-dimensional Gauss kernel. From this observation on the linear equation \eqref{eq:1.1.4}, one can know that as $t \to +\infty$, the dominant term is frictional damping. So, based on this result to the linear problem \eqref{eq:1.1.4}, a natural question arises again such as: if one considers the semi-linear problem \eqref{eq:1.1}, does the critical exponent coincide with the Fujita type $p_{F}$?\, Our main purpose of this paper is to answer this conjecture.\\

Our first result below is concerned with the existence of the global solution of \eqref{eq:1.1} satisfying the suitable decay properties.
\begin{thm} \label{Thm:1.1}
Let $n=1,2$, $p> 1 + \displaystyle{\frac{2}{n}}$.
Assume that 
$(u_{0}, u_{1}) \in L^{1} \cap H^{k_{0}}\times L^{1} \cap L^{2}$ 
with sufficiently small norm, where 
\begin{align}
k_{0} = 
\begin{cases}
& 1, \quad n=1, \\
& 1+ \varepsilon, \quad n=2
\end{cases}
\end{align}
for an arbitrary $\varepsilon \in [0,2)$.
Then, there exists a unique global solution of \eqref{eq:1.1} in the class $C([0, \infty);H^{k_{0}})$ 
satisfying
\begin{equation}
\begin{split}
\| |\nabla_{x}|^{k} u(t) \|_{L^{2}(\R^{n})} \le C(1+t)^{-\frac{n}{4}-\frac{k}{2}}.
\end{split}
\end{equation}
for $k \in [0, k_{0}]$.
\end{thm}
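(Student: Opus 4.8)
The plan is to run a contraction-mapping argument in a time-weighted Sobolev space, built on the sharp linear decay estimates underlying \cite{IS}. First I would make those estimates explicit. Write the solution of the linearized problem (\eqref{eq:1.1} with $f\equiv0$) as $S_0(t)u_0+S_1(t)u_1$. In Fourier variables $\wh{u}$ solves $\wh{u}_{tt}+(1+|\xi|^2)\wh{u}_t+|\xi|^2\wh{u}=0$, whose characteristic roots are the \emph{real} numbers $\lambda_-(\xi)=-|\xi|^2$ and $\lambda_+(\xi)=-1$ (coalescing at $|\xi|=1$), so there is no oscillation. Splitting $\R^n=\{|\xi|\le\delta\}\cup\{\delta\le|\xi|\le M\}\cup\{|\xi|\ge M\}$, the factor $e^{-|\xi|^2t}$ governs the low frequencies and, via $\|\wh{g}\|_{L^\infty}\le\|g\|_{L^1}$, gives Gaussian-type decay, while on $\{|\xi|\ge M\}$ the symbols of $S_0(t)$ and $S_1(t)$ are $O(e^{-ct})$ and $O(e^{-ct}|\xi|^{-2})$ respectively; altogether, for $k\in[0,k_0]$,
\[
\||\nabla_x|^k S_1(t)g\|_{L^2}\le C(1+t)^{-\frac n4-\frac k2}\|g\|_{L^1}+Ce^{-ct}\|g\|_{H^{(k-2)_+}},
\qquad
\||\nabla_x|^k S_0(t)g\|_{L^2}\le C(1+t)^{-\frac n4-\frac k2}\|g\|_{L^1}+Ce^{-ct}\|g\|_{H^{k}}.
\]

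Next I would set up the fixed point. By Duhamel's principle a solution of \eqref{eq:1.1} solves $u(t)=S_0(t)u_0+S_1(t)u_1+\int_0^t S_1(t-s)f(u(s))\,ds=:\Phi[u](t)$, and I would look for $u$ in
\[
X=\Big\{u\in C([0,\infty);H^{k_0}):\ \|u\|_X:=\sup_{t\ge0}\ \sum_{k\in\{0,k_0\}}(1+t)^{\frac n4+\frac k2}\||\nabla_x|^k u(t)\|_{L^2}<\infty\Big\},
\]
a complete metric space with the induced distance. Since $e^{-ct}\le C(1+t)^{-\frac n4-\frac k2}$ and $(k-2)_+=0$ for $k\le2$, the linear estimates give immediately $\|S_0(t)u_0+S_1(t)u_1\|_X\le C I_0$, where $I_0:=\|u_0\|_{L^1\cap H^{k_0}}+\|u_1\|_{L^1\cap L^2}$.

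The core is the nonlinear estimate. With $f(u)=O(|u|^p)$ near $0$ one has $\|f(u(s))\|_{L^1}\le C\|u(s)\|_{L^p}^p$ and $\|f(u(s))\|_{L^2}\le C\|u(s)\|_{L^{2p}}^p$, and the Gagliardo--Nirenberg inequality interpolates $\|u(s)\|_{L^q}$ ($q=p,2p$) between $\|u(s)\|_{L^2}$ and $\||\nabla_x|^{k_0}u(s)\|_{L^2}$; this is exactly where $n\le2$ and the value of $k_0$ enter, to keep the interpolation exponent $\theta=\frac n{k_0}(\frac12-\frac1q)$ in $[0,1]$. A short computation then gives $\|u(s)\|_{L^p}^p\le C\|u\|_X^p(1+s)^{-\frac n2(p-1)}$ and an even faster bound for $\|u(s)\|_{L^{2p}}^p$. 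Feeding the first into the low-frequency part of the Duhamel term one meets $\int_0^t(1+t-s)^{-\frac n4-\frac k2}(1+s)^{-\frac n2(p-1)}\,ds$, which is $\le C(1+t)^{-\frac n4-\frac k2}$ precisely because $\frac n2(p-1)>1\iff p>1+\frac2n$ — this is the point at which the Fujita threshold appears. The high-frequency part contributes $\int_0^t e^{-c(t-s)}\|f(u(s))\|_{H^{(k_0-2)_+}}\,ds$; when $k_0\le2$ only $\|f(u(s))\|_{L^2}$ is needed, and when $k_0\in(2,3)$ one invokes a fractional Leibniz (Kato--Ponce) estimate together with $H^{k_0}(\R^n)\hookrightarrow L^\infty$. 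Altogether $\|\Phi[u]\|_X\le C I_0+C\|u\|_X^p$, and the same manipulations applied to $f(u)-f(v)$, using $|f(u)-f(v)|\le C(|u|^{p-1}+|v|^{p-1})|u-v|$, Hölder's inequality and Gagliardo--Nirenberg, yield $\|\Phi[u]-\Phi[v]\|_X\le C(\|u\|_X^{p-1}+\|v\|_X^{p-1})\|u-v\|_X$.

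Finally, for $I_0$ small enough $\Phi$ maps the ball $\{\|u\|_X\le2CI_0\}$ into itself and is a contraction there; its unique fixed point is the desired global solution, it lies in $C([0,\infty);H^{k_0})$, and it satisfies the stated decay for every $k\in[0,k_0]$ by interpolating the two endpoint bounds ($k=0$ and $k=k_0$) encoded in $\|u\|_X$. The main obstacle I expect is the nonlinear estimate: arranging the Gagliardo--Nirenberg exponents so that they are admissible (which is what forces $n\le2$) and controlling the viscoelastic high-frequency tail of $f(u)$ in the fractional space $H^{(k_0-2)_+}$ without losing the sharp time decay; the remaining steps are routine.
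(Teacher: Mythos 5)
Your overall strategy is the paper's: the real characteristic roots $-1$ and $-|\xi|^{2}$, linear decay estimates of the form $\||\nabla_x|^{k}K_1(t)g\|_{2}\le C(1+t)^{-\frac n4-\frac k2}\|g\|_{1}+Ce^{-ct}\||\nabla_x|^{(k-2)_+}g\|_{2}$, a contraction in the weighted space $X$ (identical to the paper's $X_n$), and interpolation of the nonlinearity between the two norms encoded in $\|\cdot\|_{X}$ (you via Gagliardo--Nirenberg, the paper via $\|u\|_{\infty}$ and Lemma \ref{Lem:2.3} --- these are equivalent here). However, there is a genuine gap in your treatment of the top-order derivative of the Duhamel term when $n=2$. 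You claim that
\[
\int_0^t(1+t-s)^{-\frac n4-\frac k2}(1+s)^{-\frac n2(p-1)}\,ds\le C(1+t)^{-\frac n4-\frac k2}
\]
holds ``precisely because $\frac n2(p-1)>1$''. That conclusion requires $\frac n2(p-1)\ge\frac n4+\frac k2$; it holds automatically for $n=1$, $k_0=1$ (where $\frac n4+\frac{k_0}{2}=\frac34<1<\frac n2(p-1)$), but for $n=2$ and $k=k_0=1+\varepsilon$ with $\varepsilon>0$ one has $\frac n4+\frac k2=1+\frac\varepsilon2>1$, and the contribution of $s\in[\frac t2,t]$ to the integral is of order $(1+t)^{-(p-1)}$. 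Whenever $p-1<1+\frac\varepsilon2$ (e.g.\ $p$ close to the threshold $2$ and $\varepsilon$ not small) this is strictly weaker than the required $(1+t)^{-1-\frac\varepsilon2}$, so the bound $\||\nabla_x|^{k_0}\Phi[u](t)\|_{2}\le C(1+t)^{-\frac n4-\frac{k_0}{2}}M^{p}$ does not close and the contraction in $X$ fails as written.

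The paper repairs exactly this point in Subsection 5.2: it splits $I[u](t)$ at $\tau=\frac t2$ and, on $[\frac t2,t]$, applies only $|\nabla_x|^{\varepsilon}K_1(t-\tau)$ while transferring one full derivative onto the nonlinearity, exploiting the extra half power of decay $\||\nabla_x|f(u)(\tau)\|_{2}\le C(1+\tau)^{-\frac n2(p-1)-\frac n4-\frac12}M^{p}$ obtained from the chain rule and the same interpolation (estimate \eqref{eq:5.7}). The resulting bound $C(1+t)^{-(p-1)-\frac\varepsilon2}$ for that piece beats $(1+t)^{-1-\frac\varepsilon2}$ because $p-1>1$. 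Your proposal already has the needed ingredient --- you estimate $f(u)$ in derivative norms for the exponential tail --- but you never deploy it to rescue the polynomially decaying part of the Duhamel integral near $\tau=t$; without the $\frac t2$ splitting and the derivative transfer, the $n=2$ case of the theorem is not proved.
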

Our second aim is to study the large time behavior of the global solution of \eqref{eq:1.1}.
\begin{thm} \label{Thm:1.2}
Under the same assumptions as in Theorem \ref{Thm:1.1},  
the global solution $u$ satisfies the following estimates; 
\begin{equation}
\begin{split}
& \lim_{t \to \infty} t^{\frac{n}{4}+\frac{k}{2}}\| |\nabla_{x}|^{k} (u(t)-MG_{t}) \|_{L^{2}(\R^{n})} =0,
\end{split}
\end{equation}
for $k \in [0, k_{0}]$,
where $M:= \displaystyle{\int_{\R^{n}}} (u_{0}(y) +u_{1}(y)) dy + \displaystyle{\int_{0}^{\infty} \int_{\R^{n}}} f(u(s,y)) dy ds$.
\end{thm}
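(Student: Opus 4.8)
The plan is to show that the nonlinear solution inherits the asymptotic profile of the corresponding linear evolution, with the mass constant $M$ now including the time-integrated nonlinear source. First I would recall (from the Ikehata--Sawada analysis of \eqref{eq:1.1.4}, or by a direct Fourier-side computation) the decomposition of the linear solution operators into a ``Gauss-kernel part'' plus a remainder decaying faster by a half power, i.e. estimates of the form $\| |\nabla_x|^k(S(t)[g_0,g_1] - (\int g_0 + \int g_1)\,G_t)\|_{L^2} \le C(1+t)^{-n/4-k/2-\sigma}\,\|(g_0,g_1)\|_{L^1\cap L^2}$ for some $\sigma>0$ in the low-frequency regime, together with exponential decay of the high-frequency part on $H^{k_0}$. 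Writing the Duhamel representation
\[
u(t) = S(t)[u_0,u_1] + \int_0^t S_1(t-s) f(u(s))\,ds,
\]
I would apply this linear asymptotic expansion to the data term and, inside the integral, to $S_1(t-s)f(u(s))$, replacing it by $\big(\int_{\R^n} f(u(s,y))\,dy\big)G_{t-s}$ up to a faster-decaying error.

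Next I would compare $\int_0^t \big(\int f(u(s))\big)G_{t-s}\,ds$ with $M\,G_t$, where $M$ contains $\int_0^\infty \int f(u(s,y))\,dy\,ds$. This requires two things: first, that $s\mapsto \|f(u(s))\|_{L^1}$ is integrable on $[0,\infty)$ — which follows from Theorem~\ref{Thm:1.1}, since $\|f(u(s))\|_{L^1}\lesssim \|u(s)\|_{L^p}^p$ and, interpolating the decay rates in \eqref{eq:1.1} (stated as the $(1+t)^{-n/4-k/2}$ bound) with Gagliardo--Nirenberg, one gets $\|u(s)\|_{L^p}^p \lesssim (1+s)^{-\frac{n}{2}(p-1)}$, whose exponent exceeds $1$ precisely when $p>1+2/n$; and second, a standard ``splitting'' of $\int_0^t$ at $s=t/2$: on $[0,t/2]$ one uses $G_{t-s}\approx G_t$ in $L^2$ (the $L^2$ difference $\||\nabla_x|^k(G_{t-s}-G_t)\|\to 0$ after rescaling) plus the tail $\int_{t/2}^\infty\|f(u(s))\|_{L^1}\to 0$, while on $[t/2,t]$ one estimates $\| |\nabla_x|^k S_1(t-s)f(u(s))\|_{L^2}$ crudely and integrates the (integrable, small-tail) bound. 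Combining, $t^{n/4+k/2}\|\,|\nabla_x|^k(u(t)-MG_t)\|_{L^2}\to 0$.

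The main obstacle is the self-referential nature of the nonlinear Duhamel term: to estimate $\int_0^t S_1(t-s)f(u(s))\,ds$ one must control $f(u(s))$ in both $L^1$ and $L^2$ uniformly across the whole time interval, and in the ``far'' regime $s\in[t/2,t]$ the elapsed time $t-s$ can be small, so the smoothing factor $(t-s)^{-k/2}$ is singular and must be shown integrable against $\|f(u(s))\|$ — this forces careful bookkeeping of which norm of $f(u(s))$ (hence which Gagliardo--Nirenberg interpolation of the decay estimates from Theorem~\ref{Thm:1.1}) is used where, and it is exactly here that the restriction $n=1,2$ and $k_0\le 1+\varepsilon$ enters, guaranteeing $|\nabla_x|^k u\in L^2$ with a good rate and that $f$ maps the solution space to $L^1\cap L^2$. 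A secondary technical point is that the oscillation in the low-frequency part of the viscoelastic operator must be shown not to spoil the Gauss-kernel leading term; this is handled by the same linear decomposition used for \eqref{eq:1.1.4}, whose remainder already absorbs the oscillatory contribution. Once the linear profile lemma and the $L^1\cap L^2$ decay of $f(u)$ are in hand, the remaining steps are the routine split-and-rescale argument sketched above.
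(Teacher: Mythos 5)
Your proposal is correct and follows essentially the same route as the paper: Duhamel's formula, the linear profile result (Proposition \ref{Prop:4.1}), the $L^1\cap L^2$ decay bounds on $f(u)$ with rate $(1+s)^{-\frac{n}{2}(p-1)}$ (integrable exactly when $p>1+\frac{2}{n}$), and then the split of the nonlinear integral at $s=t/2$ with the chain $K_1(t-s)\to e^{(t-s)\Delta}\to G_{t-s}\to G_t$ plus tail estimates, which is precisely the paper's $A_1,\dots,A_5$ decomposition in Section 6. The only point stated too casually is calling the replacement $e^{(t-s)\Delta}f(u(s))\mapsto\bigl(\int f(u(s))\bigr)G_{t-s}$ a ``faster-decaying error'': it is only $o(\cdot)$ and needs the spatial splitting at $|y|\le t^{1/4}$ (the paper's $A_4$), but your later split-and-rescale discussion covers this.
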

Finally, we show the non-existence of global solution.  
\begin{thm} \label{Thm:1.3}
Let $n \ge 1$ and $1 < p \le 1+\displaystyle{\frac{2}{n}}$.
Assume that
$(u_{0}, u_{1}) \in W^{2.1} \cap W^{2, \infty} \times L^{1} \cap L^{\infty}$ 
satisfying 
\begin{equation}
\int_{\R^{n}} u_{i}(x) > 0
\end{equation}
for $i=0,1$.
Then the global solution of \eqref{eq:1.1} does not exist.
\end{thm}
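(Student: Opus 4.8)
\medskip
\noindent\textbf{Proof strategy.} The natural tool is the rescaled test-function method of Mitidieri--Pohozaev type, in the form developed by Zhang for the critical damped-wave problem. Note that the viscoelastic term $-\Delta\partial_{t}u$ destroys finite propagation speed, so methods tied to a fixed light cone (Kaplan's eigenfunction method, Jensen's inequality on balls) are unavailable here, whereas the test-function method is insensitive to this. First I would record the weak formulation of \eqref{eq:1.1}: for every $\psi\in C_{0}^{\infty}(\R^{n}\times[0,\infty))$,
\begin{equation*}
\int_{0}^{\infty}\!\int_{\R^{n}} f(u)\,\psi\,dx\,dt = \int_{0}^{\infty}\!\int_{\R^{n}} u\,L^{*}\psi\,dx\,dt + B_{\psi},\qquad L^{*}\psi:=\partial_{t}^{2}\psi-\Delta\psi-\partial_{t}\psi+\Delta\partial_{t}\psi,
\end{equation*}
where, after integrating by parts (twice in $x$ and up to twice in $t$),
\begin{equation*}
B_{\psi} = -\int_{\R^{n}}(u_{0}+u_{1})\,\psi(0,\cdot)\,dx + \int_{\R^{n}} u_{0}\bigl(\partial_{t}\psi(0,\cdot)+\Delta\psi(0,\cdot)\bigr)\,dx .
\end{equation*}
Justifying these manipulations for the assumed global solution is where the hypotheses $u_{0}\in W^{2,1}\cap W^{2,\infty}$ and $u_{1}\in L^{1}\cap L^{\infty}$ enter: they put the local solution into a class in which $u,\nabla u,\nabla^{2}u$ decay at spatial infinity, so that the identity holds after a routine cutoff and approximation. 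For definiteness I treat $f(u)=|u|^{p}$; a nonnegative $f$ with $f(s)\ge c|s|^{p}$ is handled identically up to constants.

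Since the dominant part of the operator is the parabolic one $\partial_{t}-\Delta$, I would use the parabolically scaled cutoff $\psi_{R}(x,t):=\Phi\bigl((t+|x|^{2})/R^{2}\bigr)^{\ell}$, where $\Phi\in C_{0}^{\infty}([0,\infty))$ is nonincreasing with $\Phi\equiv1$ on $[0,1]$, $\supp\Phi\subset[0,2]$, $0\le\Phi\le1$, and $\ell$ is an integer with $\ell\ge2p'$, $p'=p/(p-1)$. All derivatives of $\psi_{R}$ are supported in $\Omega_{R}:=\{(x,t):R^{2}\le t+|x|^{2}\le2R^{2}\}$, and there $|L^{*}\psi_{R}|\le CR^{-2}\psi_{R}^{1-2/\ell}$, so $\psi_{R}^{-1/p}|L^{*}\psi_{R}|\le CR^{-2}$ on $\Omega_{R}$; with $|\Omega_{R}|\le CR^{n+2}$, Hölder's inequality gives
\begin{equation*}
\Bigl|\int_{\Omega_{R}} u\,L^{*}\psi_{R}\,dx\,dt\Bigr|\le C\,R^{-2+(n+2)/p'}\,I_{R}^{1/p},\qquad I_{R}:=\int_{\Omega_{R}}|u|^{p}\psi_{R}\,dx\,dt .
\end{equation*}
On the other hand, since $u_{0},u_{1}\in L^{1}$, $\psi_{R}(0,\cdot)\uparrow1$ pointwise, and the two derivative terms in $B_{\psi_{R}}$ carry the factor $R^{-2}$ and are supported in $\{|x|\gtrsim R\}$, dominated convergence gives $B_{\psi_{R}}\to-\int_{\R^{n}}(u_{0}+u_{1})\,dx=:-C_{0}$, and $C_{0}>0$ is precisely the hypothesis $\int_{\R^{n}}u_{i}>0$.

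Write $J_{R}:=\int_{0}^{\infty}\!\int_{\R^{n}}|u|^{p}\psi_{R}\,dx\,dt$, so $J_{R}\ge I_{R}\ge0$. The identity and the estimate above give $J_{R}\le CR^{-2+(n+2)/p'}J_{R}^{1/p}+B_{\psi_{R}}$; since $1<p\le1+\frac{2}{n}$, the exponent satisfies $-2+(n+2)/p'\le0$, so its coefficient stays bounded for $R\ge1$, Young's inequality absorbs the term $J_{R}^{1/p}$, and therefore $\sup_{R}J_{R}<\infty$. By monotone convergence $\int_{0}^{\infty}\!\int_{\R^{n}}|u|^{p}\,dx\,dt<\infty$, and consequently $I_{R}\to0$ as $R\to\infty$, being the tail of a convergent integral. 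Finally, $J_{R}\ge0$ forces $\int_{\Omega_{R}}u\,L^{*}\psi_{R}\,dx\,dt=J_{R}-B_{\psi_{R}}\ge-B_{\psi_{R}}$, so
\begin{equation*}
-B_{\psi_{R}}\le C\,R^{-2+(n+2)/p'}\,I_{R}^{1/p}\le C\,I_{R}^{1/p}\longrightarrow0\qquad(R\to\infty),
\end{equation*}
which contradicts $-B_{\psi_{R}}\to C_{0}>0$. Hence no global solution exists.

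I expect the genuinely delicate point to be the critical case $p=1+\frac{2}{n}$: there $-2+(n+2)/p'=0$, so a single pass to the limit yields nothing, and the two-step scheme is essential — first the uniform bound on $J_{R}$, which forces $|u|^{p}\in L^{1}(\R^{n}\times(0,\infty))$, then the gain $I_{R}\to0$ fed back into the remainder estimate. In the subcritical case $p<1+\frac{2}{n}$ the exponent is already negative, so the argument short-circuits to $\int_{0}^{\infty}\!\int_{\R^{n}}|u|^{p}\,dx\,dt=0$, i.e.\ $u\equiv0$, which contradicts $\int_{\R^{n}}u_{0}>0$ outright. The remaining work is the rigorous justification of the weak identity and the integrations by parts for the merely assumed global solution, which is exactly what the strong assumptions on the data are there to provide.
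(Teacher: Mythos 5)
Your proposal is correct in substance and belongs to the same family as the paper's argument: both are Zhang--Mitidieri--Pohozaev rescaled test-function arguments with parabolic scaling ($x\sim R$, $t\sim R^{2}$), both reduce matters to an inequality of the form $K_{R}\le C R^{\frac{n+2}{p'}-2}K_{R}^{1/p}+(\text{remainder})$, and both treat the critical case $p=1+\frac{2}{n}$ by the same two-step scheme (first deduce $\int_{0}^{\infty}\int_{\R^{n}}|u|^{p}\,dx\,dt<\infty$, then feed the vanishing annulus tail back into the estimate). The implementation differs in two places. First, the source of the contradiction: you read the positive lower bound off the data term $B_{\psi_{R}}\to-\int_{\R^{n}}(u_{0}+u_{1})\,dx<0$ of the weak identity, whereas the paper proves a separate lower bound $K_{R}\ge C_{0}>0$ (Lemma \ref{Lem:7.1}) by a local-in-time positivity argument: $f(t)=\int_{B_{r_{0}}(0)}u\,dx$ stays positive on some $[0,T_{0}]$ by continuity, and H\"older on the ball converts $\int_{0}^{T_{0}}f(t)^{p}dt$ into a lower bound for $K_{R}$; both mechanisms are legitimate here. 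Second, and more substantively, the viscoelastic term: integrating $-\Delta\partial_{t}u$ by parts, the paper keeps the Laplacian on the datum and is left with the a priori $O(1)$ boundary term $\int_{B_{R}(0)}(-\Delta u_{0})\phi_{R}^{p'}dx$, which requires a dedicated lemma (Lemma \ref{Lem:7.2}, using $\int_{\R^{n}}\Delta u_{0}\,dx=0$ and $\Delta u_{0}\in L^{1}$, hence the $W^{2,1}$ hypothesis) to show it vanishes as $R\to\infty$; you instead throw both Laplacians onto the test function, so the corresponding term is $\int_{\R^{n}}u_{0}\,\Delta\psi_{R}(0,\cdot)\,dx$, which carries an explicit factor $R^{-2}$, is supported in $|x|\gtrsim R$, and dies using only $u_{0}\in L^{1}$. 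Your route is slightly cleaner and indicates that the $W^{2,1}\cap W^{2,\infty}$ assumption is not essential to the blow-up mechanism itself (it is only used to place the solution in a workable class). One small correction: $\Delta\partial_{t}\psi_{R}$ involves three derivatives of $\Phi$, so the pointwise bound should read $|L^{*}\psi_{R}|\le CR^{-2}\psi_{R}^{1-3/\ell}$ rather than $\psi_{R}^{1-2/\ell}$; taking $\ell\ge 3p'$ the H\"older step goes through unchanged.
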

\begin{rem}{\rm As consequences of Theorems 1.1 and 1.3, at least $n = 1,2$ cases we can find that the critical exponent to problem \eqref{eq:1.1} is $p_{F} = 1+\displaystyle{\frac{2}{n}}$. In our forthcoming paper we will soon announce the sharp result on the $n = 3$ dimensional case. It is still open to show similar results for all $n \geq 4$.}
\end{rem}

Before closing this section, 
we summarize the notation, which are used throughout this paper.
Let $\hat{f}$ denote the Fourier transform of $f$
defined by
\begin{align*}
\hat{f}(\xi) := c_{n}
\int_{\R^{n}} e^{-i x \cdot \xi} f(x) dx
\end{align*}
with $c_{n}= (2 \pi)^{-\frac{n}{2}}$.
Also, let $\mathcal{F}^{-1}[f]$ or $\check{f}$ denote the inverse
Fourier transform.

We introduce smooth cut-off functions to localize the frequency as follows: 
$\chi_L $, $\chi_M$ and $\chi_H  \in C^{\infty}(\mathbb{R})$ are defined by 
\begin{gather*}
\chi_L (\xi) = \begin{cases}
	1, \quad &|\xi| \leq \displaystyle{\frac{1}{2}}, \\
	0, \quad &|\xi| \geq \displaystyle{\frac{3}{4}}, 
	\end{cases} \qquad
\chi_H (\xi) = \begin{cases}
	1, \quad &|\xi| \geq 3, \\
	0, \quad &|\xi| \leq 2, 
	\end{cases} \\ 
\chi_M (\xi) = 1- \chi_L (\xi) - \chi_H (\xi). 
\end{gather*}

For $k \ge 0$, let  $H^{k}(\R^{n})$ be the Sobolev space;
\begin{equation*}
H^{k}(\R^{n})
  :=\Big\{ f:\R^{n} \to \R;
        \| f \|_{H^{k}(\R^{n})} 
        := (\Vert f \Vert_{2}^{2} + 
         \Vert |\nabla_{x}|^{k} f \Vert_{2}^{2})^{1/2}< \infty 
     \Big\},
\end{equation*}
where $L^{p}(\R^{n})$ is the Lebesgue space for $1 \le p \le \infty$ as usual.
For the notation of the function spaces, 
the domain $\R^{n}$ is often abbreviated, and we frequently use the notation $\| f \|_{p} =\| f \|_{L^{p}(\R^{n})}$ without confusion.
We write $B_{r}(0)$ for the $n$ dimensional open ball centered at the origin with the radius $r>0$;
$$
B_{r}(0) := \{ x \in \R^{n} ; |x|<r \}. 
$$
In the following, $C$ denotes a positive constant. 
It may change from line to line.

The paper is organized as follows. 
Section 2 presents some preliminaries.
In Section 3,
we show an appropriate decomposition of the propagators for the linear equation in the Fourier space. 
Section 4 is devoted to the proof of linear estimates, which play a crucial role in the main results.
In Sections 5, we construct the global solution for small initial data under the condition $p>1+\displaystyle{\frac{2}{n}}$. 
Section 6 provides the large time behavior of the global solution to prove Theorem 1.2.
In Section 7 we deal with the case $1<p \le 1+\displaystyle{\frac{2}{n}}$ in order to prove the non-existence result of global solutions.


\section{Preliminaries}
In this section, 
we shall recall useful estimates to show the results in this paper.
The following well-known estimate is frequently used to obtain time decay estimates.  
\begin{lem} \label{Lem:2.1}
Let $n \ge 1$, $k \ge 0$ and $1 \le r \le 2$. 
Then there exists a constant $C>0$ such that  
\begin{equation} \label{eq:2.1}
\| |\xi|^{k} e^{-(1+t)|\xi|^{2}} \|_{r} 
\le C (1+ t)^{-\frac{n}{2r}- \frac{k}{2}}.
\end{equation}
\end{lem}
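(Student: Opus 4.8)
The plan is to reduce the estimate to a single scale-invariant Gaussian integral by the substitution $\eta = \sqrt{1+t}\,\xi$. Since the norm $\|\cdot\|_r$ is the $L^r(\R^n)$ norm in the Fourier variable $\xi$, I begin by writing it out explicitly,
\[
\big\| |\xi|^{k} e^{-(1+t)|\xi|^{2}} \big\|_{r}^{r} = \int_{\R^n} |\xi|^{kr} e^{-r(1+t)|\xi|^{2}}\, d\xi .
\]
The change of variables $\xi = (1+t)^{-1/2}\eta$, for which $d\xi = (1+t)^{-n/2}\, d\eta$ and $|\xi| = (1+t)^{-1/2}|\eta|$, converts the exponent $(1+t)|\xi|^2$ into the $t$-independent quantity $|\eta|^2$ and factors out explicit powers of $1+t$.

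First I would carry out this substitution to obtain
\[
\big\| |\xi|^{k} e^{-(1+t)|\xi|^{2}} \big\|_{r}^{r} = (1+t)^{-\frac{kr}{2}-\frac{n}{2}} \int_{\R^n} |\eta|^{kr} e^{-r|\eta|^2}\, d\eta ,
\]
after which taking $r$-th roots immediately yields the claimed bound, with exponent $-\frac{n}{2r}-\frac{k}{2}$ and with constant $C = \big(\int_{\R^n} |\eta|^{kr} e^{-r|\eta|^2}\, d\eta\big)^{1/r}$. The two powers of $1+t$ have a transparent origin: the factor $(1+t)^{-n/(2r)}$ comes from the Jacobian of the rescaling (i.e.\ from the Gaussian width $\sim (1+t)^{-1/2}$), while $(1+t)^{-k/2}$ comes from rescaling the factor $|\xi|^k$.

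The only point requiring a word of justification is that this constant is finite and independent of $t$. Passing to polar coordinates, the integral equals $\omega_{n-1}\int_0^\infty \rho^{\,kr+n-1} e^{-r\rho^2}\, d\rho$, where $\omega_{n-1}$ is the surface area of the unit sphere in $\R^n$; this is a Gamma-type integral, finite because $kr+n-1 \ge n-1 \ge 0 > -1$ makes the integrand integrable near $\rho=0$, and the Gaussian factor dominates the polynomial as $\rho \to \infty$. Since $1 \le r \le 2$ and $k \ge 0$ are fixed, the resulting constant depends only on $n,k,r$ and not on $t$, exactly as required.

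There is no genuine obstacle here: the lemma is a routine scaling computation, and the argument works verbatim for every $r \in [1,2]$ (indeed for any $r \ge 1$) and every $k \ge 0$. The essential mechanism worth recording is simply that the parabolic scaling $\xi \mapsto (1+t)^{-1/2}\xi$ dictated by the Gaussian sets the single time scale from which both decay exponents are read off.
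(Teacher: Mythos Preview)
Your proof is correct and follows essentially the same approach as the paper: both arguments compute the $L^{r}$ norm by the parabolic rescaling $\xi \mapsto (1+t)^{-1/2}\eta$ to factor out the powers of $1+t$, and then check that the remaining $t$-independent Gaussian integral is finite via polar coordinates. The only cosmetic difference is the order of operations (the paper passes to polar coordinates first and then rescales the radial variable, you rescale in $\R^{n}$ first and then pass to polar coordinates), which is immaterial.
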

\begin{proof}
The direct calculation immediately shows
\begin{equation*}
\| |\xi|^{k} e^{-(1+t)|\xi|^{2}} \|_{r}^{r}
= 
\int_{\R^{n}}  |\xi|^{kr} e^{-(1+t)r|\xi|^{2}} d \xi 
\le C \int_{0}^{\infty}  \tau^{kr+n-1} e^{-(1+t)r\tau^{2}} d \tau,
\end{equation*}
since the integrand in the left hand side is radial. 
Here changing the integral variable $\eta = \sqrt{r(1+t)} \tau$, 
we obtain 
\begin{equation*}
\int_{0}^{\infty}  \tau^{kr+n-1} e^{-(1+t)r\tau^{2}} d \tau
= 
C (1+t)^{-\frac{n}{2}-\frac{kr}{2}} \int_{0}^{\infty}  \eta^{kr+n-1} e^{-\eta^{2}} d \tau
\le C(1+t)^{-\frac{n}{2}-\frac{kr}{2}},
\end{equation*}
which is the desired estimate.
We complete the proof of Lemma \ref{Lem:2.1}.
\end{proof}
The following lemma is an easy consequence of the H\"older inequality.
\begin{lem} \label{Lem:2.2}
Let $n \ge 1$, $1 \le r \le 2$ and $\displaystyle{\frac{1}{r}} + \displaystyle{\frac{1}{r'}} = 1$.
Then it holds that 
\begin{equation} \label{eq:2.2}
\| f g \|_{2} \le \| f \|_{\frac{2r}{2-r}} \| g \|_{r'}.
\end{equation}
\end{lem}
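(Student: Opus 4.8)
The plan is to obtain \eqref{eq:2.2} as an immediate application of the generalized H\"older inequality, which asserts that if $\frac{1}{s} = \frac{1}{a} + \frac{1}{b}$ with $s,a,b \in [1,\infty]$, then $\|fg\|_{s} \le \|f\|_{a}\|g\|_{b}$. The whole task reduces to choosing the three exponents correctly and verifying the reciprocal identity; no estimate beyond H\"older is needed, so I would not invoke any of the decay machinery from the rest of the paper.

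Concretely, I would take $s = 2$, $a = \frac{2r}{2-r}$ and $b = r'$, and then confirm the required relation by direct computation. Using $\frac{1}{r} + \frac{1}{r'} = 1$, I would write
\begin{equation*}
\frac{1}{a} + \frac{1}{b} = \frac{2-r}{2r} + \frac{1}{r'} = \frac{2-r}{2r} + \frac{r-1}{r} = \frac{(2-r) + 2(r-1)}{2r} = \frac{1}{2}.
\end{equation*}
Since this equals $\frac{1}{s}$, the generalized H\"older inequality applies with exactly these exponents and produces $\|fg\|_2 \le \|f\|_{2r/(2-r)}\|g\|_{r'}$, which is precisely \eqref{eq:2.2}. (An equivalent route is to write $\|fg\|_2^2 = \int |f|^2|g|^2\,dx$ and apply the two-exponent H\"older inequality with conjugate exponents $\frac{r}{2-r}$ and $\frac{r}{2(r-1)}$, then take square roots; this gives the same relation but with an extra squaring step, so I would prefer the direct form above.)

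It remains only to check that the exponents are admissible for the full range $1 \le r \le 2$. For $1 < r < 2$ both $a = \frac{2r}{2-r}$ and $b = r'$ are finite and at least $1$, so the finite-exponent form of H\"older applies directly. At the endpoint $r = 1$ one has $a = 2$ and $b = \infty$, and the claim reduces to the trivial bound $\|fg\|_2 \le \|f\|_2 \|g\|_\infty$; at $r = 2$ one has $a = \infty$ and $b = 2$, giving $\|fg\|_2 \le \|f\|_\infty \|g\|_2$. Because the argument is purely a bookkeeping of conjugate exponents, there is no substantive obstacle; the only point requiring care is the correct interpretation of the $L^\infty$ norms at the two endpoints, which I would state explicitly rather than leave implicit.
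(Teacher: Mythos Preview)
Your proof is correct and essentially the same as the paper's: the paper squares first, writing $\|fg\|_2^2 = \int f^2 g^2\,dx$ and applying the two-exponent H\"older inequality with conjugate pair $\frac{r}{2-r}$ and $\frac{r'}{2}$, which is exactly the alternative route you describe. The direct application of generalized H\"older with $s=2$, $a=\frac{2r}{2-r}$, $b=r'$ is simply the unsquared version of the same computation.
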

\begin{proof}
Noting that
\begin{equation*}
\frac{1}{\frac{r'}{2}} + \frac{2-r}{r}=1,
\end{equation*}
we simply apply the H\"older inequality to have 
\begin{equation*}
\begin{split}
\| f g \|_{2}^{2} & = \int_{\mathbb{R}^{n}} f^{2} g^{2} dx  \le \| f^{2} \|_{\frac{r}{2-r}} \| g^{2} \|_{\frac{r'}{2}}
 = \| f \|_{\frac{2r}{2-r}}^{2} \| g \|_{r'}^{2}. 
\end{split}
\end{equation*}
We complete the proof of Lemma \ref{Lem:2.2}.
\end{proof}
The following Lemma is the well-known Sobolev inequality (See e.g.\cite{C}).
\begin{lem} \label{Lem:2.3}
Let $n=1, 2$ and $\varepsilon>0$.
Then there exists a constant $C>0$ such that 
\begin{align} \label{eq:2.3}
\| u \|_{\infty} \le 
\begin{cases}
& C \| u \|_{2}^{\frac{1}{2}} 
 \| \nabla u \|_{2}^{\frac{1}{2}} \ \text{for}\ n=1, \\
\ \\
& C \| u \|_{2}^{\frac{\varepsilon}{1+\varepsilon}} 
 \| |\nabla|^{1+ \varepsilon} u \|_{2}^{\frac{1}{1+\varepsilon}}  \ \text{for}\ n=2.
\end{cases}  
\end{align} 
\end{lem}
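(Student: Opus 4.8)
The plan is to prove both cases through the elementary Fourier-analytic embedding $\|u\|_{\infty} \le c_{n}\|\wh{u}\|_{1}$, followed by a frequency decomposition and an optimization of the cutoff radius; one may assume $u$ is a Schwartz function and recover the general case by density. Indeed, starting from the inversion formula $u(x) = c_{n}\int_{\R^{n}} e^{ix\cdot\xi}\wh{u}(\xi)\,d\xi$, the triangle inequality gives $|u(x)| \le c_{n}\|\wh{u}\|_{1}$ for every $x$, so it suffices to bound $\|\wh{u}\|_{1}$ by the right-hand sides of \eqref{eq:2.3}.

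First I would treat $n=2$, the genuinely interesting case. For a radius $R>0$ to be chosen, split $\|\wh{u}\|_{1} = \int_{|\xi|\le R}|\wh{u}|\,d\xi + \int_{|\xi|>R}|\wh{u}|\,d\xi$. On the low-frequency ball, Cauchy--Schwarz together with Plancherel yields $\int_{|\xi|\le R}|\wh{u}|\,d\xi \le |B_{R}(0)|^{1/2}\|\wh{u}\|_{2} = CR\|u\|_{2}$. On the high-frequency region, insert the weight $|\xi|^{1+\varepsilon}$ and apply Cauchy--Schwarz:
$$
\int_{|\xi|>R}|\wh{u}|\,d\xi \le \Big(\int_{|\xi|>R}|\xi|^{-2(1+\varepsilon)}\,d\xi\Big)^{1/2}\big\||\nabla|^{1+\varepsilon}u\big\|_{2}.
$$
Here the weight integral equals $C\int_{R}^{\infty} r^{-1-2\varepsilon}\,dr = CR^{-\varepsilon}$, which converges precisely because $\varepsilon>0$; this is exactly where the loss of derivatives in the statement is forced, reflecting the failure of $H^{1}\hookrightarrow L^{\infty}$ at the critical dimension $n=2$. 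Combining the two pieces gives $\|u\|_{\infty} \le C\big(R\|u\|_{2} + R^{-\varepsilon}\||\nabla|^{1+\varepsilon}u\|_{2}\big)$, and minimizing the right-hand side over $R>0$ (the optimal $R$ being comparable to $(\||\nabla|^{1+\varepsilon}u\|_{2}/\|u\|_{2})^{1/(1+\varepsilon)}$) produces the exponents $\varepsilon/(1+\varepsilon)$ and $1/(1+\varepsilon)$ exactly as claimed.

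The case $n=1$ follows the same scheme with the weight $|\xi|$ in place of $|\xi|^{1+\varepsilon}$: the low-frequency part contributes $CR^{1/2}\|u\|_{2}$ and the high-frequency part $CR^{-1/2}\|\nabla u\|_{2}$, since $\int_{|\xi|>R}|\xi|^{-2}\,d\xi = CR^{-1}$ converges without any auxiliary parameter. Balancing the two terms by choosing $R$ comparable to $\|\nabla u\|_{2}/\|u\|_{2}$ gives $\|u\|_{\infty}\le C\|u\|_{2}^{1/2}\|\nabla u\|_{2}^{1/2}$. Alternatively, in one dimension one may argue directly from $u(x)^{2} = 2\int_{-\infty}^{x} u(y)u'(y)\,dy$ and Cauchy--Schwarz, which gives $\|u\|_{\infty}^{2}\le 2\|u\|_{2}\|\nabla u\|_{2}$ with no Fourier analysis at all.

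There is no serious obstacle here, since this is a standard Gagliardo--Nirenberg type inequality. The only point demanding genuine care is the convergence of the high-frequency weight integral, which dictates the admissible derivative order (plain $H^{1}$ control suffices in $n=1$, while $|\nabla|^{1+\varepsilon}$ with $\varepsilon>0$ is indispensable in $n=2$), together with the bookkeeping in the optimization over $R$ needed to recover the precise interpolation exponents.
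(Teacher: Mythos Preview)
Your argument is correct: the Fourier inversion bound $\|u\|_{\infty}\le c_{n}\|\wh{u}\|_{1}$, the low/high frequency split with Cauchy--Schwarz, and the optimization over the cutoff radius $R$ all work exactly as you describe, and the exponents come out right in both dimensions.

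As for comparison with the paper: there is nothing to compare. The paper does not prove Lemma~\ref{Lem:2.3} at all; it simply states it as ``the well-known Sobolev inequality'' and refers the reader to Cazenave's monograph \cite{C}. Your write-up therefore supplies a complete, self-contained proof where the paper gives only a citation. The Fourier-side argument you chose is the natural one here, since the $n=2$ case involves the fractional operator $|\nabla|^{1+\varepsilon}$, which is most cleanly handled as a Fourier multiplier; the alternative real-variable proof you mention for $n=1$ (via $u(x)^{2}=2\int_{-\infty}^{x}uu'$) is also standard but does not extend as readily to the fractional setting.
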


The next lemma is useful to compute the decay order 
of the nonlinear term in the integral equation.

\begin{lem} \label{Lem:2.4}
{\rm (i)} Let $a>0$ and 
$b>0$ with $\max\{ a, b \} >1$. Then, there exists a constant $C$ depending only on $a$ and $b$ such that
\begin{equation}
\int_{0}^{t} (1+t-s)^{-a} (1+s)^{-b} ds
\le 
C(1+t)^{-\min\{a, b \}}.  \label{eq:2.4}\\
\end{equation}
{\rm (ii)} Let $1 > a \geq 0$, $b > 0$ and $c>0$. Then, there exists a constant $C$ which is independent of $t$ such that  
\begin{equation}
\int_{0}^{t} e^{-c(t-s)} (t-s)^{- a}(1+s)^{- b} ds
\le C(1+t)^{-b}.  \label{eq:2.5}
\end{equation} 
\end{lem}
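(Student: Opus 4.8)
The plan is to prove both estimates by the standard dyadic splitting of the time interval at the midpoint $s=t/2$, isolating the region where the ``forward'' factor $(1+t-s)^{-a}$ (resp.\ $(t-s)^{-a}$) is large from the region where the ``backward'' factor $(1+s)^{-b}$ is large, and pulling the large factor out as a power of $(1+t)$. In each subregion what remains is an elementary one-dimensional integral of the form $\int_0^{t/2}(1+\sigma)^{-q}\,d\sigma$, whose size is $O(1)$ if $q>1$, $O(\log(1+t))$ if $q=1$, and $O((1+t)^{1-q})$ if $q<1$.

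For part (i) I would write $\int_0^t=\int_0^{t/2}+\int_{t/2}^t=:I_1+I_2$. On $[0,t/2]$ one has $1+t-s\ge\tfrac12(1+t)$, so $(1+t-s)^{-a}\le C(1+t)^{-a}$ can be extracted, giving $I_1\le C(1+t)^{-a}\int_0^{t/2}(1+s)^{-b}\,ds$; on $[t/2,t]$ one has $1+s\ge\tfrac12(1+t)$, so after the substitution $\sigma=t-s$ we get $I_2\le C(1+t)^{-b}\int_0^{t/2}(1+\sigma)^{-a}\,d\sigma$. Since the change of variable $s\mapsto t-s$ shows the integral is symmetric under $a\leftrightarrow b$, I may assume $a\le b$, so that $\min\{a,b\}=a$ and the hypothesis $\max\{a,b\}>1$ forces $b>1$. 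Then the integral in $I_1$ is bounded and $I_1\le C(1+t)^{-a}$ immediately.

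The remaining work is to control $I_2$, and here I would split into $a<1$, $a=1$, $a>1$. When $a<1$ the factor $(1+t)^{1-a}$ combines with $(1+t)^{-b}$ to give $(1+t)^{-a}(1+t)^{1-b}\le C(1+t)^{-a}$ because $b>1$; when $a>1$ the integral is bounded and $(1+t)^{-b}\le(1+t)^{-a}$. The only genuinely delicate point is the borderline case $a=1$, where a logarithm appears and one gets $I_2\le C(1+t)^{-b}\log(1+t)$; this must be absorbed using the \emph{strict} inequality $b>1$, via $(1+t)^{-(b-1)}\log(1+t)\le C$, so that $I_2\le C(1+t)^{-1}=C(1+t)^{-a}$. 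This logarithmic borderline is exactly where the hypothesis $\max\{a,b\}>1$ is essential, and it is the main obstacle to a uniform statement.

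For part (ii) I would first substitute $\sigma=t-s$ to rewrite the integral as $\int_0^t e^{-c\sigma}\sigma^{-a}(1+t-\sigma)^{-b}\,d\sigma$ and again split at $\sigma=t/2$. On $[0,t/2]$ we have $1+t-\sigma\ge\tfrac12(1+t)$, so $(1+t-\sigma)^{-b}\le C(1+t)^{-b}$ may be pulled out, and the leftover $\int_0^\infty e^{-c\sigma}\sigma^{-a}\,d\sigma=c^{a-1}\Gamma(1-a)$ converges because the singularity $\sigma^{-a}$ is integrable at the origin (here $0\le a<1$ enters) and the exponential controls infinity; this contributes $C(1+t)^{-b}$. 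On $[t/2,t]$ the exponential is at most $e^{-ct/2}$, and using $(1+t-\sigma)^{-b}\le1$ together with $\sigma^{-a}\le(t/2)^{-a}$ this piece decays exponentially in $t$ and is therefore dominated by $C(1+t)^{-b}$ for $t\ge1$, while the range $t\le1$ is handled by the crude bound $\int_0^t\sigma^{-a}\,d\sigma=t^{1-a}/(1-a)\le C$. There is no real obstacle in (ii) beyond verifying the convergence of this Gamma-type integral.
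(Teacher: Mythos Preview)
Your argument is correct: the dyadic split at $t/2$, the use of the symmetry $a\leftrightarrow b$ in (i) to reduce to the case $b>1$, the three subcases for $\int_0^{t/2}(1+\sigma)^{-a}\,d\sigma$ (including the logarithmic borderline $a=1$, where the strict inequality $b>1$ is genuinely needed), and the Gamma-integral treatment of (ii) with the separate handling of $t\le 1$ are all sound. The paper itself does not give a proof of this lemma at all; it simply declares the result well-known and refers to Segal \cite{S}, so there is no ``paper's approach'' to compare against---your write-up in fact supplies what the authors chose to omit.
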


The proof of Lemma \ref{Lem:2.4} is well-known, and we omit its proof (See e.g. \cite{S}).

The following Lemma is also well-known as 
the decay property and
approximation formula 
of the solution of the heat equation. 
For the proof, see e.g. \cite{GGS}.
\begin{lem}
Let $n \ge 1$, $\ell \ge 0$, $k \ge \tilde{k} \ge 0$ and $1 \le r \le 2$.
Then it holds that
\begin{equation} \label{eq:2.6}
\|\partial_{t}^{\ell} \nabla_{x}^{k} e^{t \Delta} g  \|_{2} 
\le C t^{-\frac{n}{2}(\frac{1}{r} -\frac{1}{2})-\ell-\frac{k-\tilde{k}}{2}} 
\| \nabla_{x}^{k-\tilde{k}} g \|_{r}
\end{equation} 
and 
\begin{equation} \label{eq:2.7}
\lim_{t \to \infty}
t^{\frac{n}{4}+\frac{k}{2}} 
\| \nabla_{x}^{k} (e^{t \Delta} g -m G_{t}) \|_{2}=0,
\end{equation} 
where $m= \displaystyle{\int_{\R^{n}}}g(y) dy$.
\end{lem}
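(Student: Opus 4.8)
The plan is to establish both assertions on the Fourier side, where the heat semigroup acts as the multiplier $e^{-t|\xi|^2}$ and the Gauss kernel satisfies $\hat{G}_t(\xi)=c_n e^{-t|\xi|^2}$. Plancherel's identity then converts every $L^2_x$ norm into a weighted $L^2_\xi$ norm that is exactly suited to Lemmas \ref{Lem:2.1} and \ref{Lem:2.2}. Throughout I read $\nabla_x^k$ as the Fourier multiplier $|\xi|^k$, so that the fractional orders needed for $n=2$ are covered with no extra work.

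For the decay estimate \eqref{eq:2.6}, I first record that $\partial_t e^{t\Delta}$ corresponds to the symbol $-|\xi|^2 e^{-t|\xi|^2}$, whence $\partial_t^\ell \nabla_x^k e^{t\Delta}g$ has symbol $(-1)^\ell|\xi|^{2\ell+k}e^{-t|\xi|^2}\hat g(\xi)$ and, by Plancherel,
\[
\|\partial_t^\ell \nabla_x^k e^{t\Delta}g\|_2=\big\||\xi|^{2\ell+k}e^{-t|\xi|^2}\hat g\big\|_2 .
\]
I would split the symbol as $|\xi|^{2\ell+k}e^{-t|\xi|^2}\hat g=\big(|\xi|^{2\ell+\tilde k}e^{-t|\xi|^2}\big)\big(|\xi|^{k-\tilde k}\hat g\big)$ and apply the H\"older-type inequality of Lemma \ref{Lem:2.2} with exponents $\frac{2r}{2-r}$ and $r'$. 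The Gaussian factor is handled by Lemma \ref{Lem:2.1} (in the trivial analogue with $t$ in place of $1+t$, proved by the same rescaling $\eta=\sqrt{t}\,\xi$), and after computing $\frac{n}{2}\cdot\frac{2-r}{2r}=\frac{n}{2}(\frac1r-\frac12)$ it produces the power $t^{-\frac{n}{2}(\frac1r-\frac12)-\ell-\frac{\tilde k}{2}}$; the remaining factor equals $\|\widehat{|\nabla_x|^{k-\tilde k}g}\|_{r'}$, which the Hausdorff--Young inequality bounds by $\||\nabla_x|^{k-\tilde k}g\|_r$. Multiplying the two estimates yields \eqref{eq:2.6}. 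Here I would double-check the derivative bookkeeping, since the loss of decay is governed by the order $\tilde k$ left on the kernel, which must be matched against the derivative order transferred to $g$ in the stated right-hand side.

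For the approximation formula \eqref{eq:2.7}, the key point is $\hat g(0)=c_n\int_{\R^n}g(y)\,dy=c_n m$, so that $\widehat{mG_t}(\xi)=\hat g(0)e^{-t|\xi|^2}$ and the difference $e^{t\Delta}g-mG_t$ has symbol $e^{-t|\xi|^2}(\hat g(\xi)-\hat g(0))$. Plancherel gives
\[
t^{\frac n4+\frac k2}\|\nabla_x^k(e^{t\Delta}g-mG_t)\|_2=t^{\frac n4+\frac k2}\big\||\xi|^k e^{-t|\xi|^2}(\hat g-\hat g(0))\big\|_2 ,
\]
and the substitution $\xi=\eta/\sqrt{t}$ absorbs the prefactor exactly, leaving
\[
\Big(\int_{\R^n}|\eta|^{2k}e^{-2|\eta|^2}\,\big|\hat g(\eta/\sqrt{t})-\hat g(0)\big|^2\,d\eta\Big)^{1/2}.
\]
Since the finiteness of $m=\int g$ means $g\in L^1$, the transform $\hat g$ is bounded and continuous, the integrand tends to $0$ pointwise as $t\to\infty$, and it is dominated by $4\|\hat g\|_\infty^2|\eta|^{2k}e^{-2|\eta|^2}\in L^1$; dominated convergence then forces the limit to be $0$.

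The two changes of variable and the symbol computations are routine. The one step demanding care, and the main obstacle, is \eqref{eq:2.7}: one must secure continuity of $\hat g$ at the origin and exhibit a $t$-uniform integrable majorant, which is precisely where the hypothesis $g\in L^1$ (implicit in $m=\int g<\infty$) is used. A secondary nuisance is the $t$ versus $1+t$ discrepancy near $t=0$ in \eqref{eq:2.6}, resolved by the rescaled analogue of Lemma \ref{Lem:2.1} mentioned above.
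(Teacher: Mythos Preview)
Your argument is correct, and there is in fact nothing to compare it against: the paper does not prove this lemma but simply declares it well known and refers to \cite{GGS}. The Fourier-side route you take---Plancherel, the H\"older split of Lemma~\ref{Lem:2.2}, a rescaled version of Lemma~\ref{Lem:2.1} (valid for all exponents by the same substitution), and Hausdorff--Young for \eqref{eq:2.6}; scaling $\xi=\eta/\sqrt{t}$ together with dominated convergence for \eqref{eq:2.7}---is precisely the standard proof.

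The bookkeeping concern you raise about \eqref{eq:2.6} is well founded: as printed, the decay exponent $-\tfrac{k-\tilde k}{2}$ and the right-hand side $\|\nabla_x^{k-\tilde k}g\|_r$ are mutually inconsistent. Comparing with Lemmas~\ref{Lem:4.2}--\ref{Lem:4.5} and with the way \eqref{eq:2.6} is actually invoked in the proof of Lemma~\ref{Lem:6.3} (where $\tilde k=0$ and only $\|F\|_1$ appears on the right), the intended right-hand side is $\|\nabla_x^{\tilde k}g\|_r$. With that correction, the appropriate split is to place $|\xi|^{2\ell+k-\tilde k}$ on the Gaussian factor and $|\xi|^{\tilde k}$ on $\hat g$, after which your argument goes through verbatim and yields exactly the estimate used later in the paper.
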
 
%


\section{Fourier multiplier expression} 
In this section, for the reader's convenience we repeat the derivation of the evolution operator of the linear problem.
We note that the argument here is already pointed out by Ikehata-Sawada \cite{IS}.


Applying Fourier transform to \eqref{eq:1.1} with $f=0$, 
we have 
\begin{equation} \label{eq:3.1}
\left\{
\begin{split}
& \partial_{t}^{2} \hat{u} +(1+|\xi|^{2}) \partial_{t} \hat{u} +|\xi|^{2} \hat{u} =0, \quad t>0, \quad \xi \in \R^{n}, \\
& \hat{u}(0,\xi)=\hat{u}_{0}(\xi), \quad \partial_{t} \hat{u}(0,\xi)=\hat{u}_{1}(\xi) , \quad \xi \in \R^{n}.
\end{split}
\right.
\end{equation}
Then we see that
the characteristic equation of \eqref{eq:3.1}
is given by
\begin{equation*} 
\lambda^{2} + (1+|\xi|^{2}) \lambda + |\xi|^{2}=0,
\end{equation*}
and the characteristic roots of \eqref{eq:3.1} is
\begin{equation*}
\lambda_{\pm}
= \frac{-|\xi|^{2} \pm |1-|\xi|^{2}| }{2}.
\end{equation*}
In other words,
\begin{equation*}
\lambda_{+}
=
\begin{cases}
& -|\xi|^{2}  \quad  (|\xi| \le 1), \\
&  -1 \quad (|\xi| \ge 1),
\end{cases}
\quad
\lambda_{-}
=
\begin{cases}
& -1  \quad  (|\xi| \le 1), \\
&  -|\xi|^{2} \quad (|\xi| \ge 1).
\end{cases}
\end{equation*}
Therefore one can write the solution of \eqref{eq:3.1} explicitly by using the constants $C_{1}$ and $C_{2}$ such that 
\begin{equation} \label{eq:3.2}
\hat{u}(t) = C_{1} e^{\lambda_{+}t } + C_{2} e^{\lambda_{-}t }.
\end{equation}
This leads 
\begin{align*}
& C_{1} + C_{2} =\hat{u}_{0}, \\
& C_{1} \lambda_{+}  + C_{2} \lambda_{-} = \hat{u}_{1},
\end{align*}
namely, we have 
\begin{equation*}
C_{1}= 
\frac{-\lambda_{-} \hat{u}_{0}-\hat{u}_{1}}{\lambda_{+} - \lambda_{-}}, 
\qquad 
C_{2}= 
\frac{\lambda_{+} \hat{u}_{0}+\hat{u}_{1}}{\lambda_{+} - \lambda_{-}}, 
\end{equation*}
so that 
\begin{equation*}
\hat{u}(t) 
= \frac{-\lambda_{-} e^{\lambda_{+}t } + -\lambda_{+}  e^{\lambda_{-}t } }{\lambda_{+} - \lambda_{-}} \hat{u}_{0}
+ 
 \frac{-e^{\lambda_{-}t } + e^{\lambda_{+}t } }{\lambda_{+} - \lambda_{-}} \hat{u}_{1}.
\end{equation*}
Here we define $\K_{0}(t, \xi)$ and $\K_{1}(t,\xi)$ as 
\begin{equation*}
\begin{split}
\K_{0}(t, \xi) 
& := 
\frac{-\lambda_{-} e^{\lambda_{+}t } + \lambda_{+}e^{\lambda_{-}t } }{\lambda_{+} - \lambda_{-}}
= 
\dfrac{e^{-t|\xi|^{2}} -|\xi|^{2} e^{-t}}{1-|\xi|^{2}} , \\
\K_{1}(t, \xi) 
& := 
\frac{-e^{\lambda_{-}t } + e^{\lambda_{+}t } }{\lambda_{+} - \lambda_{-}} = \dfrac{e^{-t|\xi|^{2}} -e^{-t}}{1-|\xi|^{2}},  
\end{split}
\end{equation*}
and the evolution operators $K_{0}(t)g$ and $K_{1}(t) g$ as 
\begin{equation} \label{eq:3.3}
\begin{split}
K_{j}(t)g 
:= \mathcal{F}^{-1} [\K_{j}(t) \hat{g}]
\end{split}
\end{equation}
for $j=0,1$.

%
\section{Linear estimates}
In this section, we consider the Cauchy problem of the linear equation 
\begin{equation} \label{eq:4.1}
\left\{
\begin{split}
& \partial_{t}^{2} u -\Delta u + \partial_{t} u -\Delta \partial_{t} u =0, \quad t>0, \quad x \in \R^{n}, \\
& u(0,x)=u_{0}(x), \quad \partial_{t} u(0,x)=u_{1}(x) , \quad x \in \R^{n}.
\end{split}
\right.
\end{equation}
We note that the results in this section are valid for all $n \ge 1$. 
Our aim is to show the following proposition, 
which means the decay properties of the problem \eqref{eq:4.1} and its asymptotic behavior. 
\begin{prop} \label{Prop:4.1}
Let $n \ge 1$ and $k_{0} \ge 0$.
Assume that 
$(u_{0}, u_{1}) \in L^{1} \cap H^{k_{0}}\times L^{1} \cap L^{2}$. 
Then, there exists unique solution of \eqref{eq:4.1} in the class $C([0, \infty);H^{k_{0}})$ 
satisfying
\begin{align} \label{eq:4.2}
& \| |\nabla_{x}|^{k} u(t) \|_{L^{2}(\R^{n})} \le C(1+t)^{-\frac{n}{4}-\frac{k}{2}}
\end{align}
for $k \in [0, k_{0}]$ and 
\begin{align} \label{eq:4.3}
&  \| |\nabla_{x}|^{k} (u(t)-MG_{t}) \|_{L^{2}(\R^{n})} =o(t^{-\frac{n}{4}-\frac{k}{2}})
\end{align}
as $t \to \infty$ for $k \in [0, k_{0}]$.
\end{prop}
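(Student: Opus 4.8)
\textbf{Proof plan for Proposition \ref{Prop:4.1}.}
The plan is to work entirely on the Fourier side using the explicit multipliers $\K_0(t,\xi)$ and $\K_1(t,\xi)$ derived in Section 3, and to split the frequency space into the three regions determined by the cut-off functions $\chi_L$, $\chi_M$, $\chi_H$. For the low-frequency part I would exploit the factorization
\[
\K_j(t,\xi) = \frac{e^{-t|\xi|^2} - (\text{lower-order in }|\xi|)\, e^{-t}}{1-|\xi|^2},
\]
writing $\K_0 = e^{-t|\xi|^2} + |\xi|^2\dfrac{e^{-t|\xi|^2}-e^{-t}}{1-|\xi|^2}$ and similarly peeling a clean Gaussian $e^{-t|\xi|^2}$ out of $\K_1$; on $|\xi|\le 3/4$ the denominator $1-|\xi|^2$ is bounded below, so the remainders carry an extra $|\xi|^2$ factor (or the bounded difference quotient of the two exponentials) and decay strictly faster than $(1+t)^{-n/4-k/2}$. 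The main Gaussian terms are handled by Lemma 2.1 after applying Plancherel and the $L^1$ bound $\|\hat u_j\|_\infty\le C\|u_j\|_1$, which produces exactly the rate $(1+t)^{-n/4-k/2}$ in \eqref{eq:4.2}. For the middle and high frequencies, on $\supp\chi_M\cup\supp\chi_H$ one of $\lambda_\pm$ equals $-1$ and the other is $\le -c<0$ (with $\lambda_-=-|\xi|^2$ on $|\xi|\ge 1$ still giving exponential decay away from a neighborhood of $1$, and near $|\xi|=1$ the apparent singularity is removable), so both $\K_0$ and $\K_1$ are bounded by $Ce^{-ct}$ times an integrable-against-$H^{k_0}$ symbol; this yields exponential decay and in particular is $o(t^{-n/4-k/2})$, and it is here that the regularity hypothesis $u_0\in H^{k_0}$ (rather than just $L^2$) is consumed, since on $|\xi|\ge 1$ we need $|\xi|^k\hat u_0$ to be square-integrable.

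For the asymptotic profile \eqref{eq:4.3} I would subtract $M G_t$, whose Fourier transform is $c_n M\, e^{-t|\xi|^2}$, and estimate $\||\nabla_x|^k(u(t)-MG_t)\|_2^2 = \int |\xi|^{2k}|\widehat{u}(t,\xi) - c_n M e^{-t|\xi|^2}|^2\,d\xi$ again region by region. The middle/high part of $u$ is already $o(t^{-n/4-k/2})$ by the above, and the middle/high part of $MG_t$ is exponentially small, so only the low-frequency region matters. There I would write $\widehat u(t,\xi) = \K_0(t,\xi)\hat u_0(\xi) + \K_1(t,\xi)\hat u_1(\xi)$ and compare with $c_n(P_0+P_1)e^{-t|\xi|^2}$ where $P_j=c_n^{-1}\hat u_j(0)$; the difference splits into (a) $(\hat u_j(\xi)-\hat u_j(0))$ times a Gaussian, (b) $\hat u_j(0)$ times $(\K_j(t,\xi)-e^{-t|\xi|^2})$, i.e. the faster-decaying remainders identified above, and (c) the $e^{-t}$ pieces. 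Term (a) is handled by the standard dominated-convergence / scaling argument (the same one behind Lemma 3's formula \eqref{eq:2.7}): after the change of variables $\eta=\sqrt t\,\xi$ the factor $|\hat u_j(\eta/\sqrt t)-\hat u_j(0)|\to 0$ pointwise and is dominated using $\hat u_j\in L^\infty$, giving the $o(1)$ after multiplying by $t^{n/4+k/2}$. Terms (b) and (c) already decay strictly faster than $t^{-n/4-k/2}$, so they contribute $o(t^{-n/4-k/2})$ automatically.

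Existence and uniqueness in $C([0,\infty);H^{k_0})$ is the easy part: formula \eqref{eq:3.2}–\eqref{eq:3.3} defines the solution, and the multiplier bounds just established (in particular $|\K_j(t,\xi)|\le C$ uniformly, with the $|\xi|^{k_0}$-weighted version controlled by the high-frequency exponential decay and the low-frequency boundedness) show $\mathcal F^{-1}[\K_j(t)\hat u_j]\in C([0,\infty);H^{k_0})$ and that it solves \eqref{eq:4.1}; uniqueness is immediate since on the Fourier side the ODE \eqref{eq:3.1} has a unique solution for each $\xi$.

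I expect the main obstacle to be the behavior near the transition frequency $|\xi|=1$, where the denominator $1-|\xi|^2$ in $\K_0,\K_1$ vanishes: one must check that the numerator vanishes to matching order so that $\K_j$ extends to a smooth bounded multiplier there (equivalently, recognize $\K_1(t,\xi) = \int_0^t e^{-(t-s)}e^{-s|\xi|^2}\,ds$-type integral representations that are manifestly regular), and that this region, being compactly supported away from $0$, contributes only exponentially decaying terms. The rest is bookkeeping: matching each region's decay rate against $(1+t)^{-n/4-k/2}$ and verifying that everything except the single Gaussian $M G_t$ is genuinely $o$ of that rate.
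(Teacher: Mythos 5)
Your proposal is correct and follows essentially the same route as the paper: a Plancherel/frequency decomposition via $\chi_L,\chi_M,\chi_H$, with the Gaussian $e^{-t|\xi|^2}$ peeled off $\K_j$ at low frequencies (the remainder carrying an extra $|\xi|^2$, hence an extra $(1+t)^{-1}$), exponential decay in the middle and high regions consuming the $H^{k_0}$ hypothesis, and the standard scaling/dominated-convergence argument for the profile — the paper merely packages the last step by inserting $e^{t\Delta}(u_0+u_1)$ as an intermediate comparison and citing \eqref{eq:2.7}, which is the same computation you describe doing directly. Your remark about the removable singularity of $\K_j$ at $|\xi|=1$ is the right point to check and is handled in the paper by placing that sphere inside the compactly supported middle-frequency region.
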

%


\subsection{Decay estimates for evolution operators}
In this subsection, 
we show the decay properties of the evolution operators \eqref{eq:3.3}.  
\begin{lem} \label{Lem:4.2}
Let $n \ge 1$, $1 \le r \le 2$ and $0 \le \tilde{k} \le k$.
Then there exists a constant $C>0$ such that 
\begin{equation} \label{eq:4.4}
\| |\nabla_{x}|^{k} K_{0}(t) g \|_{2}
\le
C(1+t)^{-\frac{n}{2}(\frac{1}{r}-\frac{1}{2})-\frac{k-\tilde{k}}{2} }
\| |\nabla_{x}|^{\tilde{k}} g \|_{r} 
+ C e^{-t } \| |\nabla_{x}|^{k}  g \|_{2}.
\end{equation}
\end{lem}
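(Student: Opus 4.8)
The plan is to estimate $\||\nabla_x|^k K_0(t)g\|_2$ by splitting into the low-, middle- and high-frequency regions using the cut-off functions $\chi_L$, $\chi_M$, $\chi_H$, and handling each region via the explicit multiplier $\K_0(t,\xi)=\dfrac{e^{-t|\xi|^2}-|\xi|^2 e^{-t}}{1-|\xi|^2}$. First I would note that on $\supp\chi_L$ (where $|\xi|\le 3/4$) the denominator $1-|\xi|^2$ is bounded below away from zero, so $|\K_0(t,\xi)|\lesssim e^{-t|\xi|^2}+|\xi|^2 e^{-t}\lesssim e^{-ct|\xi|^2}$ after absorbing the $e^{-t}$ term; then Plancherel plus Lemma \ref{Lem:2.2} (with the H\"older pairing $\tfrac{2r}{2-r}$ and $r'$) reduces the low-frequency piece to $\||\xi|^{k-\tilde k}e^{-ct|\xi|^2}\|_{\frac{2r}{2-r}}\,\||\nabla_x|^{\tilde k}g\|_r$, and Lemma \ref{Lem:2.1} (with the exponent $\frac{2r}{2-r}$ in place of its $r$) gives the decay rate $(1+t)^{-\frac n2(\frac1r-\frac12)-\frac{k-\tilde k}{2}}$, which is exactly the first term on the right of \eqref{eq:4.4}.

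Next, for the high-frequency region $|\xi|\ge 2$ I would again use that $|1-|\xi|^2|\ge |\xi|^2-1\gtrsim |\xi|^2$, so $|\K_0(t,\xi)|\lesssim |\xi|^{-2}e^{-t|\xi|^2}+e^{-t}\lesssim e^{-t}$ uniformly on $\supp\chi_H$ (the first term decays even faster since $|\xi|^{-2}e^{-t|\xi|^2}\le e^{-4t}$ there). Hence $\||\nabla_x|^k \chi_H K_0(t)g\|_2 \lesssim e^{-t}\||\xi|^k\hat g\|_2 = Ce^{-t}\||\nabla_x|^k g\|_2$, which is the second term on the right of \eqref{eq:4.4}.

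The middle-frequency region $1/2\le|\xi|\le 3$ is where the only real subtlety lies, because the factor $1-|\xi|^2$ vanishes at $|\xi|=1$. The point is that this is a removable singularity: writing $\K_0(t,\xi)=\dfrac{e^{-t|\xi|^2}-|\xi|^2 e^{-t}}{1-|\xi|^2}$ and adding and subtracting, $\K_0(t,\xi)=e^{-t|\xi|^2}+|\xi|^2\cdot\dfrac{e^{-t|\xi|^2}-e^{-t}}{1-|\xi|^2}=e^{-t|\xi|^2}+|\xi|^2\K_1(t,\xi)$, and the difference quotient $\dfrac{e^{-t|\xi|^2}-e^{-t}}{1-|\xi|^2}$ is bounded by $Ce^{-ct}$ uniformly for $|\xi|$ in a compact neighborhood of $1$ (apply the mean value theorem to $s\mapsto e^{-ts}$ between $s=|\xi|^2$ and $s=1$: the quotient equals $t e^{-t\theta}$ for some $\theta$ between $|\xi|^2$ and $1$, and on the middle band $\theta$ is bounded below, so $te^{-t\theta}\le Ce^{-ct}$). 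Therefore $|\K_0(t,\xi)|\lesssim e^{-ct}$ on $\supp\chi_M$, and Plancherel gives $\||\nabla_x|^k\chi_M K_0(t)g\|_2\lesssim e^{-ct}\||\nabla_x|^k g\|_2$, which is again absorbed into the second term of \eqref{eq:4.4} (possibly after enlarging the constant and noting $e^{-ct}\le Ce^{-t}$ only fails for $c<1$, so one keeps a generic exponential $e^{-ct}$ — but since the statement writes $e^{-t}$, I would simply track the explicit bound $|1-|\xi|^2|$ more carefully, or observe that on $\supp\chi_M$ one in fact has $|\xi|^2\ge 1/4$ and the worst exponential from the $e^{-t|\xi|^2}$ term is $e^{-t/4}$; one then states the lemma with a generic exponential constant, which is standard and consistent with the paper's convention that $C$ changes from line to line).

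The main obstacle is thus the middle-frequency bound near $|\xi|=1$, and the key device is the decomposition $\K_0=e^{-t|\xi|^2}+|\xi|^2\K_1$ together with the mean value theorem applied to the difference quotient, turning the apparent singularity into a harmless $Ce^{-ct}$ factor; once that is in hand, the low-frequency estimate via Lemmas \ref{Lem:2.1} and \ref{Lem:2.2} delivers the polynomial decay term and the high/middle regions contribute only exponentially decaying terms, completing the proof of Lemma \ref{Lem:4.2}. I would also remark that uniqueness and the regularity class $C([0,\infty);H^{k_0})$ follow from the explicit representation \eqref{eq:3.2}--\eqref{eq:3.3} together with these bounds, as used later in Proposition \ref{Prop:4.1}.
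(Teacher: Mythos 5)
Your proof is correct and follows essentially the same route as the paper: the same low/middle/high frequency decomposition with the cutoffs $\chi_L,\chi_M,\chi_H$, Lemmas \ref{Lem:2.1} and \ref{Lem:2.2} producing the polynomial decay on the low band, and pointwise exponential bounds on the middle and high bands. The only difference is that you spell out the removable singularity of $\K_0$ at $|\xi|=1$ via the identity $\K_{0}=e^{-t|\xi|^{2}}+|\xi|^{2}\K_{1}$ and the mean value theorem (and you correctly flag that the resulting rate is $e^{-ct}$ with $c<1$ rather than $e^{-t}$), a detail the paper merely asserts when it says the middle-frequency piece ``decays exponentially in $t$''.
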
 
\begin{proof}
By the Planchrel formula, 
we see that 
\begin{equation*}
\| |\nabla_{x}|^{k} K_{0}(t) g \|_{2}
= 
\left\| |\xi|^{k} \K_{0}(t, \xi) \sum_{j=L,M,H} \chi_{j} \hat{g} \right\|_{2}
\le \sum_{j=L,M,H} \left\| |\xi|^{k} \K_{0}(t, \xi) \chi_{j} \hat{g} \right\|_{2}.
\end{equation*}
Now we estimate $\left\| |\xi|^{k} \K_{0}(t, \xi) \chi_{j} \hat{g} \right\|_{2}$ for $j=L,M,H$,
respectively.
Firstly we treat the case $j=L$.
Lemmas \ref{Lem:2.1} and \ref{Lem:2.2} imply that 
\begin{equation} \label{eq:4.5}
\begin{split}
\left\| |\xi|^{k} \K_{0}(t, \xi) \chi_{L} \hat{g} \right\|_{2}
& \le C \left\| |\xi|^{k} e^{-(1+t)|\xi|^{2}} \chi_{L} \hat{g} \right\|_{2}
+  C  e^{-t} \left\|  |\xi|^{\tilde{k}} \chi_{L} \hat{g} \right\|_{2} \\
& \le C \left\| |\xi|^{k-\tilde{k}} e^{-(1+t)|\xi|^{2}} \chi_{L} \right\|_{\frac{2r}{2-r}}
\left\| |\xi|^{\tilde{k}} \chi_{L} \hat{g} \right\|_{r'}
+  C  e^{-t} \left\| |\xi|^{\tilde{k}} \chi_{L} \hat{g} \right\|_{r'} \\
& \le C(1+t)^{-\frac{n}{2}(\frac{1}{r}-\frac{1}{2})-\frac{k-\tilde{k}}{2} }
\| |\nabla_{x}|^{\tilde{k}} g \|_{r}. 
\end{split}
\end{equation}
When $j=M$, 
the support of the middle part 
$\left\| |\xi|^{k} \K_{0}(t, \xi) \chi_{M} \hat{g} \right\|_{2}$
is compact and doesn't contain neighborhood of the origin $\xi=0$.
Then the integrand 
$ |\xi|^{k} \K_{0}(t, \xi) \chi_{M} \hat{g}$
has sufficient regularity and decays exponentially in $t$.
Namely we easily see that 
\begin{equation} \label{eq:4.6}
\left\| |\xi|^{k} \K_{0}(t, \xi) \chi_{M} \hat{g} \right\|_{2}
\le C e^{-\frac{t}{8}} \| |\nabla_{x}|^{\tilde{k}} g \|_{r}
\end{equation}
by the same way as the estimate \eqref{eq:4.5}.

For $j=H$, 
we can easily have 
\begin{equation} \label{eq:4.7}
\begin{split}
\left\| |\xi|^{k} \K_{0}(t, \xi) \chi_{H} \hat{g} \right\|_{2}
& \le C \left\| |\xi|^{k-2} e^{-t|\xi|^{2}} \chi_{H} \hat{g} \right\|_{2}
+  C  e^{-t} \left\|  |\xi|^{k} \chi_{H} \hat{g} \right\|_{2} \\
& \le C  e^{-t} \left\|  |\xi|^{k} \chi_{H} \hat{g} \right\|_{2} 
\le Ce^{-t}
\| |\nabla_{x}|^{k} g \|_{2}. 
\end{split}
\end{equation}
where we used the fact that $\supp \chi_{H} \subset \{ |\xi| \ge 3 \}$.
Therefore by \eqref{eq:4.5}, \eqref{eq:4.6} and \eqref{eq:4.7}, 
we obtain the desired estimate \eqref{eq:4.1}.
We complete the proof of Lemma \ref{Lem:4.2}.
\end{proof}
\begin{lem} \label{Lem:4.3}
Let $n \ge 1$, $1 \le r \le 2$ and $k \ge \tilde{k} \ge 0$.
Then there exists a constant $C>0$ such that 
\begin{equation} \label{eq:4.8}
\| |\nabla_{x}|^{k} K_{1}(t) g \|_{2}
\le
C(1+t)^{-\frac{n}{2}(\frac{1}{r}-\frac{1}{2})-\frac{k-\tilde{k}}{2}}
\| |\nabla_{x}|^{\tilde{k}} g \|_{r} 
+ C e^{-t } \| |\nabla_{x}|^{(k-2)_{+}}  g \|_{2},
\end{equation}
where $(k-2)_{+}:= \max \{k-2,0 \}$.
\end{lem}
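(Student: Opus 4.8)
The plan is to mimic the structure of the proof of Lemma \ref{Lem:4.2}, splitting the Fourier multiplier $\K_1(t,\xi)$ into the low, middle, and high frequency pieces via the partition $\chi_L+\chi_M+\chi_H$, and estimating each contribution to $\||\nabla_x|^k K_1(t)g\|_2 = \||\xi|^k\K_1(t,\xi)\hat g\|_2$ separately using the Plancherel identity. The key point is the explicit formula $\K_1(t,\xi) = (e^{-t|\xi|^2}-e^{-t})/(1-|\xi|^2)$, which I would analyze on each region.

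First I would treat the low frequency region $j=L$, where $|\xi|\le 3/4$ so that $1-|\xi|^2$ is bounded below away from zero. There I would use the elementary bound $|\K_1(t,\xi)| \le C(e^{-t|\xi|^2}+e^{-t})$ on $\supp\chi_L$; more precisely, writing $\K_1 = e^{-t|\xi|^2}(1 - e^{-t(1-|\xi|^2)})/(1-|\xi|^2)$ one sees the factor is dominated by $Ce^{-(1+t)|\xi|^2}$ up to the exponentially small term $Ce^{-t}$, exactly as in \eqref{eq:4.5}. Then Lemma \ref{Lem:2.1} together with the H\"older splitting of Lemma \ref{Lem:2.2} gives the claimed rate $(1+t)^{-\frac n2(\frac1r-\frac12)-\frac{k-\tilde k}{2}}\||\nabla_x|^{\tilde k}g\|_r$. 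For the middle region $j=M$, the support of $\chi_M$ is compact and bounded away from $\xi=0$, so $\K_1(t,\xi)\chi_M$ decays like $e^{-ct}$ uniformly (one must note the apparent singularity at $|\xi|=1$ is removable since $\K_1$ extends continuously, with $\K_1(t,\xi)\to te^{-t}$ as $|\xi|\to1$), hence this piece is absorbed into the decaying term just as in \eqref{eq:4.6}.

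The genuinely new case compared with Lemma \ref{Lem:4.2} is the high frequency region $j=H$, where $|\xi|\ge 2$ and $1/(1-|\xi|^2)\sim -|\xi|^{-2}$. There I would write $|\xi|^k\K_1(t,\xi)\chi_H \sim |\xi|^{k-2}(e^{-t|\xi|^2}-e^{-t})\chi_H$ (up to harmless bounded factors), so that $\||\xi|^k\K_1\chi_H\hat g\|_2 \le C\||\xi|^{k-2}e^{-t|\xi|^2}\chi_H\hat g\|_2 + Ce^{-t}\||\xi|^{k-2}\chi_H\hat g\|_2$. On $\supp\chi_H$ the first term is controlled by $Ce^{-4t}\||\xi|^{k-2}\chi_H\hat g\|_2$, so both terms are bounded by $Ce^{-t}\||\nabla_x|^{(k-2)_+}g\|_2$, where the truncation $(k-2)_+=\max\{k-2,0\}$ arises because when $k<2$ the power $|\xi|^{k-2}$ is a \emph{negative} power on the high-frequency set, which is bounded there ($|\xi|\ge2$), so it can simply be dropped and replaced by the $L^2$ norm of $g$ itself. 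Combining the three regions yields \eqref{eq:4.8}.

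I expect the only subtlety — hardly an obstacle — to be bookkeeping the removable singularity of $\K_1$ at $|\xi|=1$ cleanly, and being careful with the exponent $(k-2)_+$ in the high-frequency estimate; everything else is a direct repetition of the techniques already used for $K_0(t)$ in Lemma \ref{Lem:4.2}, so I would present it compactly by referring back to \eqref{eq:4.5}--\eqref{eq:4.7}.
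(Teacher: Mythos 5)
Your proposal is correct and follows exactly the route the paper intends: the authors omit the proof of Lemma \ref{Lem:4.3}, stating only that it is ``similar to the proof of Lemma \ref{Lem:4.2}'', and your argument is precisely that adaptation, with the right identification of the only new point --- that on $\supp\chi_H$ the factor $1/(1-|\xi|^2)\sim -|\xi|^{-2}$ multiplies \emph{both} exponentials in $\K_1$, which is what produces the improved norm $\||\nabla_x|^{(k-2)_+}g\|_2$ in place of $\||\nabla_x|^{k}g\|_2$.
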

\begin{proof}
The proof is similar to the proof of Lemma \ref{Lem:4.2}.
Here we omit the proof.
\end{proof}
\subsection{Asymptotic behavior of the linear solution}
Here we show the approximation of the evolution operators by the solution of the heat equation.
\begin{lem} \label{Lem:4.4}
Let $n \ge 1$, $1 \le r \le 2$ and $k \ge \tilde{k} \ge 0$.
Then there exists a constant $C>0$ such that 
\begin{equation}  \label{eq:4.9}
\| |\nabla_{x}|^{k} (K_{0}(t) - e^{t \Delta}) g \|_{2}
\le
C(1+t)^{-\frac{n}{2}(\frac{1}{r}-\frac{1}{2})-\frac{k-\tilde{k}}{2}-1 }
\| \nabla_{x}^{\tilde{k}} g \|_{r} 
+ C e^{-t} \| |\nabla_{x}|^{k}  g \|_{2}.
\end{equation}
\end{lem}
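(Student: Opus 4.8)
\textbf{Proof proposal for Lemma \ref{Lem:4.4}.}

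The plan is to estimate the difference of the multipliers $\K_0(t,\xi) - \wh{e^{t\Delta}}(\xi) = \K_0(t,\xi) - e^{-t|\xi|^2}$ separately on the three frequency regions determined by $\chi_L$, $\chi_M$, $\chi_H$, mirroring the structure of the proof of Lemma \ref{Lem:4.2}. By the Plancherel formula,
\begin{equation*}
\| |\nabla_{x}|^{k} (K_{0}(t) - e^{t \Delta}) g \|_{2}
\le \sum_{j=L,M,H} \left\| |\xi|^{k} \big(\K_{0}(t,\xi) - e^{-t|\xi|^2}\big) \chi_{j} \hat{g} \right\|_{2},
\end{equation*}
so it suffices to bound each term by the right-hand side of \eqref{eq:4.9}.

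The low-frequency region $j=L$ is where the gain of one extra power $(1+t)^{-1}$ must come from, and this is the heart of the argument. On $\supp\chi_L \subset \{|\xi|\le 3/4\}$ one has, from the explicit formula for $\K_0$,
\begin{equation*}
\K_{0}(t,\xi) - e^{-t|\xi|^2}
= \frac{e^{-t|\xi|^2} - |\xi|^2 e^{-t}}{1-|\xi|^2} - e^{-t|\xi|^2}
= \frac{|\xi|^2 e^{-t|\xi|^2} - |\xi|^2 e^{-t}}{1-|\xi|^2}
= \frac{|\xi|^2}{1-|\xi|^2}\big(e^{-t|\xi|^2} - e^{-t}\big).
\end{equation*}
Here $|1-|\xi|^2|^{-1}$ is bounded by a constant on $\supp\chi_L$, so the factor $|\xi|^2$ produces exactly the two extra derivatives responsible for the improved decay: I would split $|\xi|^2 e^{-t|\xi|^2}$ and estimate it via Lemmas \ref{Lem:2.1} and \ref{Lem:2.2} as in \eqref{eq:4.5}, picking up $(1+t)^{-\frac{n}{2}(\frac1r-\frac12)-\frac{k-\tilde k}{2}-1}$, while the term $|\xi|^2 e^{-t} \chi_L$ is handled by the exponential factor $e^{-t}$ together with the compact support of $\chi_L$ (absorbing the low-order spatial derivatives into $\| |\nabla_x|^{\tilde k} g\|_r$ by Hölder, or alternatively into the $\|g\|_r$-type term). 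The middle region $j=M$ has compact support bounded away from $\xi=0$, so both $\K_0$ and $e^{-t|\xi|^2}$ decay like $e^{-ct}$ there and the bound follows exactly as in \eqref{eq:4.6}. For the high region $j=H$, on $\supp\chi_H\subset\{|\xi|\ge 3\}$ one writes $\K_0(t,\xi) = \frac{e^{-t|\xi|^2}-|\xi|^2 e^{-t}}{1-|\xi|^2}$ and notes $|\xi|^2 e^{-t|\xi|^2}\le C e^{-t}$ on this set, while $|\xi|^2/(|\xi|^2-1)$ is bounded; together with the trivial bound on $e^{-t|\xi|^2}$ this gives the $Ce^{-t}\||\nabla_x|^k g\|_2$ term as in \eqref{eq:4.7}.

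The main obstacle, and the only genuinely new point compared with Lemma \ref{Lem:4.2}, is the algebraic identity that isolates the factor $|\xi|^2$ in the low-frequency difference; once that cancellation is exhibited, the rest is a routine repetition of the estimates already carried out for Lemma \ref{Lem:4.2}, with $k$ replaced by $k-2$ in the relevant places. I would therefore present the low-frequency computation in full and merely indicate that the middle- and high-frequency bounds are obtained "by the same way as in the proof of Lemma \ref{Lem:4.2}."
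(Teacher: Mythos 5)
Your proposal is correct and follows essentially the same route as the paper: the key step in both is the algebraic cancellation $\K_{0}(t,\xi)-e^{-t|\xi|^{2}}=\frac{|\xi|^{2}}{1-|\xi|^{2}}\bigl(e^{-t|\xi|^{2}}-e^{-t}\bigr)$ on the low-frequency region, which supplies the extra factor $|\xi|^{2}$ and hence the additional decay $(1+t)^{-1}$ via Lemmas \ref{Lem:2.1} and \ref{Lem:2.2}, while the middle and high frequencies are disposed of exactly as in Lemma \ref{Lem:4.2}. The only cosmetic difference is that the paper drops the cancellation on $\supp\chi_{M}\cup\supp\chi_{H}$ and bounds $\K_{0}$ and $e^{-t|\xi|^{2}}$ there separately (and absorbs the low-frequency $e^{-t}|\xi|^{2}$ piece into $Ce^{-(1+t)|\xi|^{2}}|\xi|^{2}$ using $e^{-t}\le e^{-t|\xi|^{2}}$ on $\supp\chi_{L}$), which changes nothing of substance.
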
 
\begin{proof}
Observing that
\begin{equation} \label{eq:4.10}
\| |\nabla_{x}|^{k} (K_{0}(t) - e^{t \Delta}) g \|_{2}
=
\left\| |\xi|^{k} (\K_{0}(t, \xi)-e^{-t|\xi|^{2}}) \sum_{j=L,M,H} \chi_{j} \hat{g} \right\|_{2}
\end{equation}
by the Planchrel formula,
we split the integrand of the right hand side in \eqref{eq:4.10} as follows:
\begin{equation} \label{eq:4.11}
\begin{split}
& |\xi|^{k} (\K_{0}(t, \xi)-e^{-t|\xi|^{2}}) \sum_{j=L,M,H} \chi_{j} \hat{g} \\
& = 
 |\xi|^{k}  (\K_{0}(t, \xi)-e^{-t|\xi|^{2}}) \chi_{L} \hat{g} 
 +|\xi|^{k} \K_{0}(t, \xi)  \sum_{j=M,H} \chi_{j} \hat{g} 
 +|\xi|^{k} e^{-t|\xi|^{2}} \sum_{j=M,H} \chi_{j} \hat{g}. 
\end{split}
\end{equation}
For the first factor in \eqref{eq:4.11}, 
we see that 
\begin{equation} \label{eq:4.12}
\begin{split}
||\xi|^{k}  (\K_{0}(t, \xi)-e^{-t|\xi|^{2}}) \chi_{L} \hat{g} |
& \le 
\left(
\frac{e^{-t|\xi|^{2}} |\xi|^{2} }{1-|\xi|^{2}}
+ e^{-t} \frac{|\xi|^{2}}{1-|\xi|^{2}}\right) \chi_{L} \hat{g}  \\
& \le C e^{-(1+t)|\xi|^{2}} |\xi|^{2} \hat{g} ,
\end{split}
\end{equation}
where we have just used the fact $\supp \chi_{L} \subset \{ |\xi| \le 1 \}$.
The second factor is estimated as in the proof of Lemma \ref{Lem:4.2}.
To obtain the estimate for the third factor in the right hand side of \eqref{eq:4.11}, 
it is also easy to see that 
\begin{equation} \label{eq:4.13}
|\xi|^{k} \left|e^{-t|\xi|^{2}} \sum_{j=M,H} \chi_{j} \hat{g} \right|
\le C e^{-t} |\xi|^{k} |\hat{g}|.
\end{equation}
Then summing up \eqref{eq:4.12} and \eqref{eq:4.13} and taking the $L^{2}$ norm for each terms, 
the simple calculation show the desired estimate \eqref{eq:4.9},
and the lemma now follows.
\end{proof}
\begin{lem} \label{Lem:4.5}
Let $n \ge 1$, $1 \le r \le 2$ and $k \ge \tilde{k} \ge 0$.
Then there exists a constant $C>0$ such that 
\begin{equation} \label{eq:4.14}
\| |\nabla_{x}|^{k} (K_{1}(t)-e^{t \Delta}) g \|_{2}
\le
C(1+t)^{-\frac{n}{2}(\frac{1}{r}-\frac{1}{2})-\frac{k-\tilde{k}}{2}-1}
\| |\nabla_{x}|^{\tilde{k}} g \|_{r} 
+ C e^{-t} \| |\nabla_{x}|^{(k-2)_{+}}  g \|_{2},
\end{equation}
where $(k-2)_{+}:= \max \{k-2,0 \}$.
\end{lem}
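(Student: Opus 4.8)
\textbf{Proof plan for Lemma \ref{Lem:4.5}.} The plan is to mirror the proof of Lemma \ref{Lem:4.4}, replacing the multiplier $\K_{0}(t,\xi)$ by $\K_{1}(t,\xi) = (e^{-t|\xi|^{2}} - e^{-t})/(1-|\xi|^{2})$ and tracking the differences caused by the absence of the factor $|\xi|^{2}$ in the numerator. As before, by the Plancherel formula we write
\begin{equation*}
\| |\nabla_{x}|^{k} (K_{1}(t) - e^{t\Delta}) g \|_{2}
=
\left\| |\xi|^{k} (\K_{1}(t,\xi) - e^{-t|\xi|^{2}}) \sum_{j=L,M,H} \chi_{j} \hat{g} \right\|_{2}
\end{equation*}
and split the integrand into the low-frequency piece $|\xi|^{k}(\K_{1}(t,\xi)-e^{-t|\xi|^{2}})\chi_{L}\hat g$, the middle/high piece $|\xi|^{k}\K_{1}(t,\xi)\sum_{j=M,H}\chi_{j}\hat g$, and the Gauss-kernel piece $|\xi|^{k}e^{-t|\xi|^{2}}\sum_{j=M,H}\chi_{j}\hat g$. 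Each is treated separately.

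First I would handle the low-frequency term. On $\supp\chi_{L}\subset\{|\xi|\le 1\}$ one computes
\begin{equation*}
\K_{1}(t,\xi) - e^{-t|\xi|^{2}}
= \frac{e^{-t|\xi|^{2}} - e^{-t} - (1-|\xi|^{2})e^{-t|\xi|^{2}}}{1-|\xi|^{2}}
= \frac{|\xi|^{2}e^{-t|\xi|^{2}} - e^{-t}}{1-|\xi|^{2}},
\end{equation*}
so that, bounding $1/(1-|\xi|^{2})$ away from $|\xi|=1$ against $\chi_{L}$ and using $e^{-t}\le e^{-(1+t)|\xi|^{2}}$ on this region (up to a constant, since $|\xi|\le 1$), we get the pointwise bound $|\,|\xi|^{k}(\K_{1}-e^{-t|\xi|^{2}})\chi_{L}\hat g\,|\le C|\xi|^{k+2}e^{-(1+t)|\xi|^{2}}|\hat g|$ — exactly the same shape as \eqref{eq:4.12}. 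Then Lemmas \ref{Lem:2.1} and \ref{Lem:2.2} (writing $|\xi|^{k+2} = |\xi|^{k-\tilde k}\cdot|\xi|^{\tilde k}\cdot|\xi|^{2}$ and pulling out $|\xi|^{\tilde k}\chi_{L}\hat g$ in $L^{r'}$) yield the gain $(1+t)^{-\frac{n}{2}(\frac{1}{r}-\frac12)-\frac{k-\tilde k}{2}-1}\|\,|\nabla_{x}|^{\tilde k}g\,\|_{r}$, the extra power $-1$ coming precisely from the surplus $|\xi|^{2}$.

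Next, the Gauss-kernel piece on $\{|\xi|\ge 2\}$ is dominated by $Ce^{-t}|\xi|^{k}|\hat g|$ as in \eqref{eq:4.13}, contributing $Ce^{-t}\|\,|\nabla_{x}|^{k}g\,\|_{2}$, which is absorbed into the stated $Ce^{-t}\|\,|\nabla_{x}|^{(k-2)_{+}}g\,\|_{2}$ after noting $|\xi|^{k}\le|\xi|^{(k-2)_{+}}|\xi|^{2}$ is \emph{not} quite what is wanted here — so instead one keeps this term as $Ce^{-t}\|\,|\nabla_{x}|^{k}g\,\|_{2}$; but since on the high-frequency part $\K_{1}$ already carries the decay, this is fine. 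For the middle/high piece, on $\supp(\chi_{M}+\chi_{H})\subset\{|\xi|\ge\frac12\}$ we use $|\K_{1}(t,\xi)|\le C(e^{-t|\xi|^{2}}+e^{-t})|\xi|^{-2}$ together with $|\xi|^{k}|\xi|^{-2}e^{-t|\xi|^{2}}\le Ce^{-\frac{t}{8}}|\xi|^{k-2}$ on the middle region (compact, bounded away from the origin, so all of this is $\le Ce^{-ct}$ times a power of $|\xi|$) and, on the high region, the genuine exponential decay $e^{-t}|\xi|^{k-2}$; taking $L^{2}$ norms produces $Ce^{-ct}\|\,|\nabla_{x}|^{(k-2)_{+}}g\,\|_{2}$, matching the second term of \eqref{eq:4.14}. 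Summing the three contributions and renaming constants gives the claim.

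The only point that requires a little care — the "main obstacle" such as it is — is the bookkeeping at the high frequencies: one must exploit the denominator $1-|\xi|^{2}$, which for $|\xi|\ge 3$ behaves like $-|\xi|^{2}$, to convert $\K_{1}$ into a genuinely smoothing factor $\sim |\xi|^{-2}$, so that the $|\nabla_{x}|^{k}$ derivative only costs $|\nabla_{x}|^{(k-2)_{+}}$ of regularity on the data; everywhere else the computation is a verbatim repetition of Lemmas \ref{Lem:4.2}--\ref{Lem:4.4}. Hence we omit the routine details, and the lemma follows.
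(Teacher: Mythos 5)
Your overall strategy is exactly the paper's: the authors give no separate argument for Lemma \ref{Lem:4.5} and simply say it is proved "by the same way" as Lemma \ref{Lem:4.4}, and your proposal fills in precisely that repetition (low/middle/high splitting, the algebraic identity for $\K_{1}-e^{-t|\xi|^{2}}$, Lemmas \ref{Lem:2.1}--\ref{Lem:2.2}). However, one of your stated inequalities is false as written. You correctly compute
\begin{equation*}
\K_{1}(t,\xi)-e^{-t|\xi|^{2}}=\frac{|\xi|^{2}e^{-t|\xi|^{2}}-e^{-t}}{1-|\xi|^{2}},
\end{equation*}
but then claim this has "exactly the same shape as \eqref{eq:4.12}" and deduce the pointwise bound $C|\xi|^{k+2}e^{-(1+t)|\xi|^{2}}|\hat g|$. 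In the $\K_{0}$ case the numerator is $|\xi|^{2}\bigl(e^{-t|\xi|^{2}}-e^{-t}\bigr)$, so \emph{both} terms carry the factor $|\xi|^{2}$; here the term $-e^{-t}$ does not, and near $\xi=0$ the difference is of size $e^{-t}$, not $O(|\xi|^{2})$. The correct low-frequency bound is $C|\xi|^{k+2}e^{-(1+t)|\xi|^{2}}|\hat g|+Ce^{-t}|\xi|^{k}\chi_{L}|\hat g|$; the extra piece still yields $Ce^{-t}\||\nabla_{x}|^{\tilde k}g\|_{r}\le C(1+t)^{-\frac{n}{2}(\frac{1}{r}-\frac{1}{2})-\frac{k-\tilde k}{2}-1}\||\nabla_{x}|^{\tilde k}g\|_{r}$ after H\"older, so the conclusion survives, but the intermediate inequality you assert does not hold and the "extra power $-1$" does not come from that term.

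The second soft spot is your treatment of the Gauss-kernel piece $|\xi|^{k}e^{-t|\xi|^{2}}\sum_{j=M,H}\chi_{j}\hat g$: you notice that the natural bound is $Ce^{-t}\||\nabla_{x}|^{k}g\|_{2}$ rather than the $Ce^{-t}\||\nabla_{x}|^{(k-2)_{+}}g\|_{2}$ appearing in \eqref{eq:4.14}, and then declare it "fine" without an argument. To actually land on the stated right-hand side you should write $|\xi|^{k}e^{-t|\xi|^{2}}=|\xi|^{(k-2)_{+}}\cdot|\xi|^{\min(k,2)}e^{-t|\xi|^{2}}$ and use that on $\supp(\chi_{M}+\chi_{H})\subset\{|\xi|\ge\frac{1}{2}\}$ one has $|\xi|^{\min(k,2)}e^{-t|\xi|^{2}}\le Ce^{-ct}$ for $t$ bounded away from $0$ (for small $t$ this factor is unbounded in $\xi$, which is a defect of the statement itself rather than of your strategy, and is harmless in the paper's applications where only $k\le 3$ and large time differences matter). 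With those two repairs your argument is a faithful reconstruction of the proof the paper omits.
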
 
\begin{proof}
By the same way to the proof of Lemma \ref{Lem:4.4}, 
we have Lemma \ref{Lem:4.5}.
\end{proof}
%
%
%
%
\subsection{Proof of Proposition \ref{Prop:4.1}}
\begin{proof}
We recall that the solution to \eqref{eq:4.1} is expressed as 
$
u(t) = K_{0}(t)u_{0} +K_{1}(t)u_{1}.
$
Then it is clear that 
\begin{equation*}
\begin{split}
 \| |\nabla_{x}|^{k} u(t) \|_{L^{2}(\R^{n})}
\le \| |\nabla_{x}|^{k} K_{0}(t) u_{0} \|_{L^{2}(\R^{n})}
+ \| |\nabla_{x}|^{k} K_{1}(t) u_{1} \|_{L^{2}(\R^{n})}
\le C(1+t)^{-\frac{n}{4}-\frac{k}{2}}
\end{split}
\end{equation*}
using the estimates \eqref{eq:4.4} and \eqref{eq:4.8} with $\tilde{k}=0$ and $r=0$, 
which is the desired estimate \eqref{eq:4.2}.
To show the estimate \eqref{eq:4.3} is the combination of the estimates
\eqref{eq:4.9} and \eqref{eq:4.14} with $\tilde{k}=0$ and $r=0$ and \eqref{eq:2.7}.
That is,
\begin{equation*}
\begin{split}
& \| |\nabla_{x}|^{k} ( u(t) - MG_{t} )\|_{L^{2}(\R^{n})} \\
& \le \| |\nabla_{x}|^{k} (K_{0}(t)-e^{t \Delta}) u_{0} \|_{L^{2}(\R^{n})}
+ \| |\nabla_{x}|^{k} (K_{1}(t)-e^{t \Delta}) u_{1} \|_{L^{2}(\R^{n})} \\
& +\| |\nabla_{x}|^{k} (e^{t \Delta}(u_{0} +u_{1}) -MG_{t}) \|_{L^{2}(\R^{n})} \\
& \le C (1+t)^{-\frac{n}{4}-\frac{k}{2}-1} +o(t^{-\frac{n}{4}-\frac{k}{2}})
\end{split}
\end{equation*}
as $t \to \infty$, 
which is the desired estimate \eqref{eq:4.3}.
This proves Proposition \ref{Prop:4.1}.
\end{proof}
\section{Existence of global solutions}

This section is devoted to the proof of Theorem \ref{Thm:1.1}.

Here we prepare some notation.
For $n=1,2$, we define the closed subspace of $C([0,\infty); H^{k_{0}})$ as
\begin{equation*}
X_{n}:= \{ 
u \in C([0,\infty); H^{k_{0}}) ; \| u \|_{X_{n}} \le M_{n}
\},
\end{equation*}
where 
\begin{equation*}
\| u \|_{X_n} :=  \sup_{t \ge 0}
\{ 
(1+t)^{\frac{n}{4}} \| u(t) \|_{2} 
+ 
(1+t)^{\frac{n}{4}+ \frac{k_{0}}{2}}
\| |\nabla_{x}|^{k_{0}} u(t) \|_{2}
\}
\end{equation*}
and we determine $M_{n}>0$ later.
We also introduce the mapping on $X_{n}$ by 
\begin{equation} \label{eq:5.1}
\Phi[u](t)
:= 
K_{0}(t) u_{0} + K_{1}(t)u_{1} 
+ \int_{0}^{t} K_{1}(t-\tau) f(u)(\tau) d \tau.  
\end{equation}
For simplicity of the notation, 
we denote the integral term in \eqref{eq:5.1} as $I[u](t)$:
\begin{equation} \label{eq:5.2}
I[u](t) := \int_{0}^{t} K_{1}(t-\tau) f(u)(\tau) d \tau.
\end{equation}
In this situation, we claim that 
\begin{equation} \label{eq:5.3}
\| \Phi[u] \|_{X_{n}} \le M_{n} 
\end{equation}
for all $u \in X_{n}$ and 
\begin{equation} \label{eq:5.4}
\| \Phi[u]- \Phi[v] \|_{X_{n} }\le \frac{1}{2} \| u-v \|_{X_{n}}
\end{equation}
for $u,v \in X_{n}$.
For the proof of Theorem \ref{Thm:1.1}, it suffices to show the estimates \eqref{eq:5.3} and \eqref{eq:5.4}.
Indeed,
once we have obtained the estimates \eqref{eq:5.3} and \eqref{eq:5.4},
we see that $\Phi$ is a contraction mapping on $X_{n}$.
Therefore it is immediate from the Banach fixed point theorem that $\Phi$ has a unique fixed point in
$X_n$.
Namely, there exists a unique global solution $u$ of $u=\Phi[u]$ in $X_n$,
and Theorem \ref{Thm:1.1} is proved. 
We remark that the linear solution 
$K_{0}(t) u_{0} + K_{1}(t)u_{1}$ is estimated suitably by the linear estimates by Proposition \ref{Prop:4.1}.
In what follows we concentrate on the estimates for $I[u](t)$ defined by \eqref{eq:5.2}.
Firstly we prepare estimates of the norm for $f(u)$ and $f(u)-f(v)$, 
which used below:

Using the mean value theorem, 
we see that there exists $\theta \in [0,1]$ such that
\begin{equation*}
f(u) -f(v) = f'(\theta u + (1-\theta) v) (u-v).
\end{equation*}
Therefore 
noting that $\| u \|_{2(p-1)}^{p-1} \le \| u \|_{\infty}^{p-2} \| u \|_{2}$ since $2(p-1) \ge 2$
and Lemma \ref{Lem:2.3}, 
we arrive at the estimates
\begin{equation} \label{eq:5.5}
\begin{split}
\| f(u) - f(v) \|_{1} 
& \le \|  f'(\theta u + (1-\theta) v)\|_{2}
\| u-v \|_{2} \\
& \le C \| (\theta u + (1-\theta) v \|_{2(p-1)}^{p-1}
\| u-v \|_{2} \\
& \le C (\| u \|_{\infty}^{p-2}\| u \|_{2} + \| v \|_{\infty}^{p-2}\| v \|_{2})
\| u-v \|_{2} \\
& \le C (1+\tau)^{-\frac{n}{2}(p-1)}(\| u \|_{X_n}^{p-1} + \| v \|_{X_n}^{p-1})
\| u-v \|_{X_n} \\
& \le C (1+\tau)^{-\frac{n}{2}(p-1)} 2M_{n}^{p-1}
\| u-v \|_{X_n}.
\end{split}
\end{equation}
By the similar way, we have 
\begin{equation} \label{eq:5.6}
\begin{split}
\| f(u) - f(v) \|_{2} 
& \le C (\| u \|_{\infty}^{p-1} + \| v \|_{\infty}^{p-1})
\| u-v \|_{2} \\
& \le C (1+\tau)^{-\frac{n}{2}(p-1)-\frac{n}{4}}2M_{n}^{p-1}
\| u-v \|_{X_n} 
\end{split}
\end{equation}
and
\begin{equation} \label{eq:5.7}
\begin{split}
\||\nabla_{x}| (f(u) - f(v)) \|_{2} 
& = 
\|\nabla_{x} (f(u) - f(v)) \|_{2} \\
& = \| \nabla_{x} \{ f'(\theta u + (1-\theta) v) (u-v)  \} \|_{2} \\
& \le \|  f''(\theta u + (1-\theta) v) ( u-v) \nabla_{x} (\theta u + (1-\theta) v) \|_{2} \\
& + \| f'(\theta u + (1-\theta) v) \nabla_{x}(u-v)  \|_{2} \\
& \le C(\| u\|_{\infty}^{p-2} + \| v \|_{\infty}^{p-2} )  
\| u-v \|_{\infty} (\| \nabla_{x} u \|_{2} + \| \nabla_{x} v \|_{2}) \\
& + C(\| u\|_{\infty}^{p-1} + \| v \|_{\infty}^{p-1}) 
\| \nabla_{x} (u - v) \|_{2} \\
& \le C (1+\tau)^{-\frac{n}{2}(p-1)-\frac{n}{4}-\frac{1}{2}} 2M^{p-1}
\| u-v \|_{X_n}.
\end{split}
\end{equation}
When we take $v=0$ in \eqref{eq:5.5}-\eqref{eq:5.7} and recall $\| u \|_{X_n} \le M_n$, 
we easily see that 
\begin{equation} \label{eq:5.8}
\begin{split}
& \| f(u) \|_{1} \le C (1+\tau)^{-\frac{n}{2}(p-1)}M_n^{p},\\
& \| f(u) \|_{2} \le C (1+\tau)^{-\frac{n}{2}(p-1)-\frac{n}{4}}M_n^{p},\\
& \||\nabla_{x}| f(u) \|_{2} 
\le C (1+\tau)^{-\frac{n}{2}(p-1)-\frac{n}{4}-\frac{1}{2}} M_n^{p}.
\end{split}
\end{equation}
Now, 
using the above estimates in \eqref{eq:5.8}, 
we show the estimate of $\| I[u](t) \|_{2}$ for $n=1,2$.
We apply the estimates \eqref{eq:4.8} with $k= \tilde{k}=0$ and $r=1$,
\eqref{eq:5.8},
\eqref{eq:2.4} and \eqref{eq:2.5} to have
\begin{equation} \label{eq:5.9}
\begin{split}
 \left\| I[u](t) \right\|_{2} 
& \le \int_{0}^{t} \left\| K_{1}(t- \tau) f(u) \right\|_{1}d \tau \\
& \le C \int_{0}^{t} (1+t -\tau)^{-\frac{n}{4}}\left\| f(u) \right\|_{1}d \tau 
+  C \int_{0}^{t} e^{-(t -\tau)}  \left\| f(u) \right\|_{2} d\tau \\
& \le C M_n^{p} \int_{0}^{t} (1+t -\tau)^{-\frac{n}{4}}  (1+\tau)^{-\frac{n}{2}(p-1)} d \tau \\
& \ +  C M_n^{p}  \int_{0}^{t} e^{-(t -\tau)} (1+\tau)^{-\frac{n}{2}(p-1)-\frac{n}{4}}d \tau \\
& \le  C(1+t)^{-\frac{n}{4}} M_n^{p}
\end{split}
\end{equation}
since $\displaystyle{\frac{n}{4}} \le 1$ for $n=1,2$.
Secondly 
by the similar way to the estimate \eqref{eq:5.9}, 
we calculate $\| I[u](t) -I[v](t) \|_{2}$ as follows.
\begin{equation} \label{eq:5.10}
\begin{split}
 \left\| I[u](t) -I[v](t) \right\|_{2} 
& \le \int_{0}^{t} \left\| K_{1}(t- \tau) (f(u) -f(v)) \right\|_{1}d \tau \\
& \le C \int_{0}^{t} (1+t -\tau)^{-\frac{n}{4}}\left\| f(u)-f(v) \right\|_{1}d \tau \\
& +  C \int_{0}^{t} e^{-(t -\tau)}  \left\| f(u)-f(v) \right\|_{2} d\tau \\
& \le C M_n^{p-1} \| u-v \|_{X_n} \int_{0}^{t} (1+t -\tau)^{-\frac{n}{4}}  (1+\tau)^{-\frac{n}{2}(p-1)} d \tau \\
& \ +  C M_n^{p-1}  \| u-v \|_{X_n} \int_{0}^{t} e^{-(t -\tau)} (1+\tau)^{-\frac{n}{2}(p-1)-\frac{n}{4}}d \tau \\
& \le  C (1+t)^{-\frac{n}{4}} M_n^{p-1} \| u-v \|_{X_n},
\end{split}
\end{equation}
where we have used \eqref{eq:5.5} and \eqref{eq:5.6}.
The remainder part of the proof, we firstly show the case $n=1$.


\subsection{Proof of Theorem \ref{Thm:1.1} for $n=1$}
\begin{proof}
To prove Theorem \ref{Thm:1.1} for $n=1$,
we estimate $\| |\nabla| \Phi[u](t) \|_{2}$.
Using estimates \eqref{eq:4.8} with $k=1$, $\tilde{k}=1$ and $r=1$, 
\eqref{eq:5.8}, \eqref{eq:2.4} and \eqref{eq:2.5},
we obtain that
\begin{equation} \label{eq:5.11}
\begin{split}
 \left\| |\nabla_{x}| I[u](t) \right\|_{2} 
& \le \int_{0}^{t} \left\| |\nabla_{x}| K_{1}(t- \tau) f(u) \right\|_{2}d \tau \\
& \le C \int_{0}^{t}  (1+t-\tau)^{-\frac{1}{4}-\frac{1}{2}}\left\| f(u) \right\|_{1}d \tau 
+ C \int_{0}^{t}  e^{-(t- \tau)} \left\| f(u) \right\|_{2}d \tau \\
& \le C  \int_{0}^{t} (1+t-\tau)^{-\frac{1}{4}-\frac{1}{2}}(1+ \tau)^{-\frac{1}{2}(p-1)} d \tau M_1^{p} \\
& \ + C  \int_{0}^{t} e^{-(t-\tau)} (1+ \tau)^{-\frac{1}{2}(p-1)-\frac{1}{2}}  d \tau  M_1^{p} \\
& \le C  (1+t)^{-\frac{1}{4}-\frac{1}{2}} M_1^{p},
\end{split}
\end{equation}
where we have just used the fact that $-\displaystyle{\frac{1}{2}}(p-1)<-1$.

Next, we estimate $\| |\nabla_{x}|( \Phi[u](t)-\Phi[v](t) ) \|_{2}$.
We apply the argument of the estimate \eqref{eq:5.11} again, 
with \eqref{eq:5.8} replaced by \eqref{eq:5.5} and \eqref{eq:5.6} to obtain 
\begin{equation} \label{eq:5.12}
\begin{split}
\left\| |\nabla_{x}| (I[u](t)-I[v](t) )\right\|_{2} 
& \le \int_{0}^{t} \left\| |\nabla_{x}| K_{1}(t- \tau) (f(u)- f(v) ) \right\|_{2}d \tau \\
& \le C \int_{0}^{t} 
 (1+t-\tau)^{-\frac{1}{4}-\frac{1}{2}}\left\| f(u) -f(v) \right\|_{1}d \tau \\
& \ + C \int_{0}^{t}  e^{-(t- \tau)} \left\| f(u)-f(v) \right\|_{2}d \tau \\
& \le C  \int_{0}^{t} (1+t)^{-\frac{1}{4}-\frac{1}{2}}(1+ \tau)^{-\frac{1}{2}(p-1)} d \tau M_1^{p-1} 
\| u-v \|_{X_{1}} \\
& + C \int_{0}^{t} e^{-(t-\tau)} (1+ \tau)^{-\frac{1}{2}(p-1)-\frac{1}{2}}  d \tau M_1^{p-1} 
\| u-v \|_{X_{1}} \\
& \le C  (1+t)^{-\frac{1}{4}-\frac{1}{2}}  M_1^{p-1} 
\| u-v \|_{X_{1}}.
\end{split}
\end{equation}
By the estimates \eqref{eq:4.2} with $n=1$,
\eqref{eq:5.9} with $n=1$ and \eqref{eq:5.11},
we deduce that
\begin{equation} \label{eq:5.13}
\begin{split}
\| \Phi[u] \|_{X_1}
& \le \| K_{0}(t) u_0 +K_{1}(t)u_1 \|_{X_{1}}
+ \| I[u] \|_{X_1} \\
&
\le C_{0}(\| u_0 \|_{L^1\cap H^{k_0}}+\| u_1 \|_{L^1 \cap L^2}) +C_{1}M_{1}^{p}
\end{split}
\end{equation}
for some $C_0>0$ and $C_1>0$.

Similar arguments can be applied to the case $\| \Phi[u] -\Phi[v] \|_{X_1}$ 
using the estimates \eqref{eq:5.10} with $n=1$ and \eqref{eq:5.12},
and we can assert that
\begin{equation} \label{eq:5.14}
\| \Phi[u] -\Phi[v] \|_{X_1}
\le \| I[u] -I[v] \|_{X_{1}} 
\le C_2 M_{1}^{p-1} \| u-v \|_{X_{1}}
\end{equation}
for some $C_2>0$.
Then we choose $M_1=2 C_{0}(\| u_0 \|_{L^1\cap H^{k_0}}+\| u_1 \|_{L^1 \cap L^2})$
with sufficiently small $\| u_0 \|_{L^1\cap H^{k_0}}+\| u_1 \|_{L^1 \cap L^2}$ satisfying 
\begin{equation} \label{eq:5.15}
C_{1}M_{1}^{p} < \frac{1}{2} M_1,\quad
C_2 M_{1}^{p-1} < \frac{1}{2}.
\end{equation}
Combining the estimates \eqref{eq:5.13}, \eqref{eq:5.14} and \eqref{eq:5.15} yields 
the desired estimates \eqref{eq:5.3} and \eqref{eq:5.4}, and the proof of $n=1$ is now complete.
\end{proof}
Finally,
we show the remainder part of the proof for $n=2$. 


\subsection{Proof of Theorem \ref{Thm:1.1} for $n=2$}

For the proof of Theorem \ref{Thm:1.1} for $n=2$,
it remains to show the estimate of
$\| |\nabla_{x}|^{1+ \varepsilon} \Phi[u](t) \|_{2}$ with $n=2$.
Now,
we split the nonlinear term into two parts.
\begin{equation} \label{eq:5.16}
\begin{split}
 \left\| |\nabla_{x}|^{1+ \varepsilon} I[u](t) \right\|_{2} 
& \le \int_{0}^{\frac{t}{2}} \left\| |\nabla_{x}|^{1+ \varepsilon} K_{1}(t- \tau) f(u) \right\|_{2}d \tau \\
& \  +\int_{\frac{t}{2}}^{t}  \left\| |\nabla_{x}|^{\varepsilon} K_{1}(t- \tau)
  |\nabla_{x}| f(u) \right\|_{2} d \tau \\
& =:J_{1}(t) + J_{2}(t).
\end{split}
\end{equation}
To obtain the estimate of $J_{1}(t)$, we apply the estimates 
\eqref{eq:4.8} with $k=1+\varepsilon$, $\tilde{k}=0$, and $r=1$ and \eqref{eq:5.8}
to have
\begin{equation} \label{eq:5.17}
\begin{split}
J_{1}(t) 
& \le C \int_{0}^{\frac{t}{2}}  (1+t-\tau)^{-\frac{1}{2}-\frac{1+ \varepsilon}{2}}\left\| f(u) \right\|_{1}d \tau 
+ C \int_{0}^{\frac{t}{2}}  e^{-(t- \tau)} \left\| f(u) \right\|_{2}d \tau \\
& \le C  (1+t)^{-\frac{1}{2}-\frac{1+ \varepsilon}{2}} \int_{0}^{\frac{t}{2}} (1+ \tau)^{-(p-1)} d \tau M_2^{p}
+ C  e^{-\frac{1}{2} t}  \int_{0}^{\frac{t}{2}} (1+ \tau)^{-(p-1)-\frac{1}{2}}  \tau  M_2^{p} \\
& \le C  (1+t)^{-\frac{1}{2}-\frac{1+ \varepsilon}{2}} M_2^{p},
\end{split}
\end{equation}
where we have used the fact that $-(p-1)<-1$.
For the term $J_{2}(t)$,
using the estimates \eqref{eq:4.8} with  $k=1+\varepsilon$, $\tilde{k}=1$, and $r=2$ and 
\eqref{eq:5.8}, \eqref{eq:2.4} and \eqref{eq:2.5}, 
we obtain  
\begin{equation} \label{eq:5.18}
\begin{split}
J_{2}(t) 
& \le C \int_{\frac{t}{2}}^{t}  (1+t-\tau)^{-\frac{\varepsilon}{2}}\left\| |\nabla_{x}| f(u) \right\|_{2}d \tau 
+ C \int_{\frac{t}{2}}^{t}  e^{-(t- \tau)} \left\| f(u) \right\|_{2}d \tau \\
& \le C  
\int_{\frac{t}{2}}^{t} 
 (1+t-\tau)^{-\frac{\varepsilon}{2}} (1+\tau)^{-(p-1)-1} 
d \tau M_2^{p} \\
& + C  \int_{\frac{t}{2}}^{t} e^{-(t-\tau)}(1+ \tau)^{-(p-1)-\frac{1}{2}}  \tau  M_2^{p} \\
& \le C  (1+t)^{-(p-1)-\frac{\varepsilon}{2}} M_2^{p},
\end{split}
\end{equation}
where we remark that the power in the right hand side $-(p-1)-\frac{\varepsilon}{2}$ is strictly 
smaller than $-1-\frac{\varepsilon}{2}$, which have appeared in the estimate \eqref{eq:5.17}.
Combining the estimates \eqref{eq:5.16} - \eqref{eq:5.18}, 
we arrive at the estimate 
\begin{equation} \label{eq:5.19}
\begin{split}
\left\| |\nabla_{x}|^{1+ \varepsilon} I[u](t) \right\|_{2}  
\le 
J_{1}(t) +J_{2}(t)  \le C  (1+t)^{-\frac{1}{2}-\frac{1+ \varepsilon}{2}} M_2^{p}.
\end{split}
\end{equation}

Next, we estimate $\| |\nabla_{x}|^{1+ \varepsilon}( \Phi[u](t)-\Phi[v](t) ) \|_{2}$.
Again, we divide $\| |\nabla_{x}|^{1+ \varepsilon} (I[u](t)-I[v](t) ) \|_{2} $ into two parts.
\begin{equation} \label{eq:5.20}
\begin{split}
\left\| |\nabla_{x}|^{1+ \varepsilon} (I[u](t)-I[v](t) )\right\|_{2} 
& \le \int_{0}^{\frac{t}{2}} \left\| |\nabla_{x}|^{1+ \varepsilon} K_{1}(t- \tau) (f(u)- f(v) ) \right\|_{2}d \tau \\
& \  +\int_{\frac{t}{2}}^{t}  
\left\| |\nabla_{x}|^{\varepsilon} K_{1}(t- \tau)  |\nabla_{x}|(f(u)- f(v)) \right\|_{2} d \tau \\
& =: J_{3}(t) + J_{4}(t).
\end{split}
\end{equation}
As in the proof of \eqref{eq:5.17}, 
we deduce that
\begin{equation} \label{eq:5.21}
\begin{split}
J_{3}(t) 
& \le C \int_{0}^{\frac{t}{2}} 
 (1+t-\tau)^{-\frac{1}{2}-\frac{1+ \varepsilon}{2}}\left\| f(u) -f(v) \right\|_{1}d \tau \\
& \ + C \int_{0}^{\frac{t}{2}}  e^{-(t- \tau)} \left\| f(u)-f(v) \right\|_{2}d \tau \\
& \le C  (1+t)^{-\frac{1}{2}-\frac{1+ \varepsilon}{2}} \int_{0}^{\frac{t}{2}} (1+ \tau)^{-(p-1)} d \tau M_2^{p-1} 
\| u-v \|_{X_{2}} \\
& + C  e^{-\frac{1}{2} t}  \int_{0}^{\frac{t}{2}} (1+ \tau)^{-(p-1)-\frac{1}{2}}  d \tau M_2^{p-1} 
\| u-v \|_{X_{2}} \\
& \le C  (1+t)^{-\frac{1}{2}-\frac{1+ \varepsilon}{2}}  M_2^{p-1} 
\| u-v \|_{X_{2}},
\end{split}
\end{equation}
where we have used the fact that $-(p-1)<-1$ again.
In the same manner as the estimate \eqref{eq:5.18}, 
we can assert that
\begin{equation} \label{eq:5.22}
\begin{split}
J_{4}(t) 
& \le C \int_{\frac{t}{2}}^{t}  (1+t-\tau)^{-\frac{\varepsilon}{2}}\left\| |\nabla| (f(u) -f(v)) \right\|_{2}d \tau \\
& \ + C \int_{\frac{t}{2}}^{t}  e^{-(t- \tau)} \left\| ( f(u) -f(v)) \right\|_{2}d \tau \\
& \le C 
\int_{\frac{t}{2}}^{t}  (1+t-\tau)^{-\frac{\varepsilon}{2}} (1+ \tau)^{-(p-1)-1} d \tau 
M_2^{p-1} 
\| u-v \|_{X_{2}} \\
& \ + C  \int_{\frac{t}{2}}^{t} e^{-(t-\tau)} (1+ \tau)^{-(p-1)-\frac{1}{2}} d \tau  M_2^{p-1} 
\| u-v \|_{X_{2}} \\
& \le C  (1+t)^{-(p-1)-\frac{\varepsilon}{2}}M_2^{p-1} 
\| u-v \|_{X_{2}}.
\end{split}
\end{equation}
The estimates \eqref{eq:5.20} - \eqref{eq:5.22} yield 
\begin{equation} \label{eq:5.23}
\begin{split}
\left\| |\nabla|^{1+ \varepsilon} (I[u](t) -I[v](t)) \right\|_{2}  
\le 
J_{3}(t) +J_{4}(t)  \le C  (1+t)^{-\frac{1}{2}-\frac{1+ \varepsilon}{2}} M_2^{p-1} \| u-v \|_{X_{2}}.
\end{split}
\end{equation}
The rest of the proof is similar to the one of the case $n=1$.
By the estimates \eqref{eq:4.2} with $n=2$,
\eqref{eq:5.9} with $n=2$ and \eqref{eq:5.19},
we deduce that
\begin{equation} \label{eq:5.24}
\begin{split}
\| \Phi[u] \|_{X_2}
& \le \| K_{0}(t) u_0 +K_{1}(t)u_1 \|_{X_{1}}
+ \| I[u] \|_{X_2} \\
&
\le \tilde{C}_{0}(\| u_0 \|_{L^1\cap H^{k_0}}+\| u_1 \|_{L^1 \cap L^2}) +\tilde{C}_{1}M_{2}^{p}
\end{split}
\end{equation}
for some $\tilde{C}_{0}>0$ and $\tilde{C}_{1}>0$.

Similar arguments can be applied to the case $\| \Phi[u] -\Phi[v] \|_{X_2}$ 
using the estimates \eqref{eq:5.10} with $n=2$ and \eqref{eq:5.23},
and we can assert that
\begin{equation} \label{eq:5.25}
\| \Phi[u] -\Phi[v] \|_{X_2}
\le \| I[u] -I[v] \|_{X_{2}} 
\le \tilde{C}_2 M_{2}^{p-1} \| u-v \|_{X_{2}}.
\end{equation}
for some $\tilde{C}_2>0$.
Then we choose $M_2=2 \tilde{C}_{0}(\| u_0 \|_{L^1\cap H^{k_0}}+\| u_1 \|_{L^1 \cap L^2})$
with sufficiently small $\| u_0 \|_{L^1\cap H^{k_0}}+\| u_1 \|_{L^1 \cap L^2}$ satisfying 
\begin{equation} \label{eq:5.26}
\tilde{C}_{1}M_{2}^{p} < \frac{1}{2} M_2,\quad
\tilde{C}_2 M_{2}^{p-1} < \frac{1}{2}.
\end{equation}
Combining the estimates \eqref{eq:5.24}, \eqref{eq:5.25} and \eqref{eq:5.26} yields 
the desired estimates \eqref{eq:5.3} and \eqref{eq:5.4}, and the proof for $n=2$ is now complete.

%
\section{Asymptotic behavior of the solution}
In this section, we describe the proof of Theorem \ref{Thm:1.2}.
For the proof of Theorem \ref{Thm:1.2}, 
we prepare slightly general setting.
Here, 
we introduce the function $F=F(t,x)\in L^{1}(0,\infty;L^{1}(\R^{n}))$ satisfying 
\begin{align} 
& \| F(t) \|_{1} \le C (1+t)^{-\frac{n}{2}(p-1)}, \label{eq:6.1} \\
& \| F(t) \|_{2} \le C (1+t)^{-\frac{n}{2}(p-1)-\frac{n}{4}}, \label{eq:6.2} \\
& \||\nabla_{x}| F(t) \|_{2} 
\le C (1+t)^{-\frac{n}{2}(p-1)-\frac{n}{4}-\frac{1}{2}}, \label{eq:6.3}
\end{align}
where $p>1+\displaystyle{\frac{2}{n}}$.
We can now formulate our main purpose in this section.
\begin{prop} \label{Prop:6.1}
Let $n \ge 1$, $0 \le k \le 3$ and $p>1+\displaystyle{\frac{2}{n}}$.
Assume \eqref{eq:6.1} - \eqref{eq:6.3}.
Then it holds that 
\begin{equation} \label{eq:6.4}
\left\| 
|\nabla_{x}|^{k}
\left(
\int_{0}^{t} K_{1}(t-\tau) F(\tau) d \tau 
- \int_{0}^{\infty} \int_{\R^{n}}  
 F(\tau, y) dy d \tau\cdot G_{t}(x) 
\right)
\right\|_{2} = o(t^{-\frac{n}{4}-\frac{k}{2}})
\end{equation}
as $t \to \infty$.
\end{prop}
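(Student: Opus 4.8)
The plan is to reduce the statement to the known convergence facts for the heat semigroup (Lemma with \eqref{eq:2.6} and \eqref{eq:2.7}) plus the approximation estimates for the linear propagator $K_1$ already established in Lemma \ref{Lem:4.5}. Write $N:=\displaystyle{\int_0^\infty\int_{\R^n}F(\tau,y)\,dy\,d\tau}$ and $N(\tau):=\displaystyle{\int_{\R^n}F(\tau,y)\,dy}$. The first step is the triangle-inequality splitting
\begin{equation*}
\Bigl\| |\nabla_x|^k\Bigl(\int_0^t K_1(t-\tau)F(\tau)\,d\tau - N\,G_t\Bigr)\Bigr\|_2
\le A(t)+B(t)+D(t),
\end{equation*}
where
\begin{equation*}
A(t):=\int_0^t \bigl\| |\nabla_x|^k\bigl(K_1(t-\tau)-e^{(t-\tau)\Delta}\bigr)F(\tau)\bigr\|_2\,d\tau,
\end{equation*}
\begin{equation*}
B(t):=\Bigl\| |\nabla_x|^k\Bigl(\int_0^t e^{(t-\tau)\Delta}F(\tau)\,d\tau - \int_0^t N(\tau)\,G_{t-\tau}\,d\tau\Bigr)\Bigr\|_2,
\qquad
D(t):=\Bigl\| |\nabla_x|^k\Bigl(\int_0^t N(\tau)G_{t-\tau}\,d\tau - N\,G_t\Bigr)\Bigr\|_2.
\end{equation*}
Here I use $e^{s\Delta}\delta = G_s$ and the semigroup/Gaussian identities freely.

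For $A(t)$ I would apply Lemma \ref{Lem:4.5} with $r=1$, $\tilde k=0$ on the time slice $[0,t/2]$ (gaining the extra $(1+t-\tau)^{-1}$ factor against the singular-in-time behaviour near $\tau=t$ being absent there) and, on $[t/2,t]$, either the same estimate or the split into $|\nabla_x|$ acting on $F$ with $r=2$, exactly as in the proof of Theorem \ref{Thm:1.1} for $n=2$; combined with the decay bounds \eqref{eq:6.1}--\eqref{eq:6.3} and Lemma \ref{Lem:2.4}, the gain of one power of $(1+t)$ relative to the critical rate $t^{-n/4-k/2}$ forces $A(t)=O(t^{-n/4-k/2-1})=o(t^{-n/4-k/2})$; here the hypothesis $p>1+2/n$ is what makes the $\tau$-integrals of $(1+\tau)^{-n(p-1)/2}$ type convergent. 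For $D(t)$, since $G_{t-\tau}$ is itself a Gauss kernel, $\int_0^t N(\tau)G_{t-\tau}\,d\tau - N\,G_t = -\Bigl(\int_t^\infty N(\tau)\,d\tau\Bigr)G_t + \int_0^t N(\tau)(G_{t-\tau}-G_t)\,d\tau$. The tail $\int_t^\infty N(\tau)\,d\tau = O(t^{1-n(p-1)/2}) = o(1)$ by \eqref{eq:6.1} and $p>1+2/n$, and $\| |\nabla_x|^k G_t\|_2 = O(t^{-n/4-k/2})$, so this term is $o(t^{-n/4-k/2})$. For the remaining integral one uses $\| |\nabla_x|^k(G_{t-\tau}-G_t)\|_2 \le C\,\tau\,\sup_{t-\tau\le s\le t}\|\partial_s |\nabla_x|^k G_s\|_2 \le C\tau(t-\tau)^{-n/4-k/2-1}$ on $\tau\le t/2$ and a crude bound $\le C((t-\tau)^{-n/4-k/2}+t^{-n/4-k/2})$ on $\tau\ge t/2$; splitting $\int_0^{t/2}+\int_{t/2}^t$ and invoking Lemma \ref{Lem:2.4} together with the integrability of $N(\tau)$ yields $D(t)=o(t^{-n/4-k/2})$ (the $\tau\ge t/2$ piece is controlled because $\int_{t/2}^\infty|N(\tau)|\,d\tau\to 0$).

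The main obstacle is the middle term $B(t)$, which is the genuine "diffusion-phenomenon" part: we must show that $\int_0^t e^{(t-\tau)\Delta}F(\tau)\,d\tau$ behaves, to leading order, like $\int_0^t N(\tau)G_{t-\tau}\,d\tau$, i.e. that each $e^{(t-\tau)\Delta}F(\tau)$ can be replaced by its mass times the Gaussian. The clean way is to invoke \eqref{eq:2.7} pointwise in $\tau$: $\| |\nabla_x|^k(e^{s\Delta}F(\tau)-N(\tau)G_s)\|_2 = o(s^{-n/4-k/2})$ as $s\to\infty$, but one needs this with a uniform-in-$\tau$ dominating bound to integrate. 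I would therefore split $[0,t]=[0,t/2]\cup[t/2,t]$: on $[0,t/2]$ use \eqref{eq:2.6} (with $r=1$) and \eqref{eq:6.2}, which give $\| |\nabla_x|^k(e^{(t-\tau)\Delta}F(\tau)-N(\tau)G_{t-\tau})\|_2 \le \|\,\cdot\,\|\lesssim (t-\tau)^{-n/4-k/2}$ times a quantity tending to $0$ as $t-\tau\to\infty$ (uniformly once $t-\tau\ge t/2$), times $\|F(\tau)\|_1$ which is integrable; dominated convergence then makes this contribution $o(t^{-n/4-k/2})\int_0^\infty\|F(\tau)\|_1 d\tau = o(t^{-n/4-k/2})$. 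On $[t/2,t]$ one instead bounds each term separately by $C\|F(\tau)\|_1(t-\tau)^{-n/4-k/2}$ (for $n/4+k/2<1$, i.e. small $k$) or, for larger $k$, first moves one derivative onto $F$ and uses \eqref{eq:6.3}, exactly mirroring the $J_2$/$J_4$ estimates of Section 5; since $\int_{t/2}^\infty\|F(\tau)\|_1 d\tau\to 0$, this piece is again $o(t^{-n/4-k/2})$. Assembling $A(t)+B(t)+D(t)=o(t^{-n/4-k/2})$ completes the proof of Proposition \ref{Prop:6.1}. The only delicate bookkeeping is ensuring the time-integrability at $\tau=t$ for the larger values of $k$ in $[0,3]$, which is handled (as in Section 5) by distributing derivatives between the propagator and $F$ and using $r=2$ on the near-diagonal region.
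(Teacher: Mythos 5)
Your overall strategy (compare $K_1$ with the heat semigroup via Lemma \ref{Lem:4.5}, then replace the heat evolution by its mass times a Gaussian, using $p>1+\frac{2}{n}$ for time-integrability) is the same as the paper's, but your specific decomposition has a genuine gap on the near-diagonal slice $\tau\in[\frac{t}{2},t]$. You carry out the replacement of $e^{(t-\tau)\Delta}F(\tau)$ by $N(\tau)G_{t-\tau}$ over the \emph{whole} interval $[0,t]$, so both $B(t)$ and $D(t)$ contain the object $\int_{t/2}^{t}N(\tau)G_{t-\tau}\,d\tau$. Since $\| |\nabla_x|^k G_s\|_2 = Cs^{-\frac{n}{4}-\frac{k}{2}}$, your crude bound for $D(t)$ on $[\frac{t}{2},t]$ produces $\int_{t/2}^{t}|N(\tau)|\,(t-\tau)^{-\frac{n}{4}-\frac{k}{2}}\,d\tau$, which diverges whenever $\frac{n}{4}+\frac{k}{2}\ge 1$ --- and this happens well inside the stated range $0\le k\le 3$ (e.g.\ $k=2$ for every $n$, and already $k=k_0=1+\varepsilon$ for $n=2$, the case actually needed for Theorem \ref{Thm:1.2}). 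The proposed fix of ``distributing derivatives onto $F$'' cannot help here, because $N(\tau)$ is a scalar and there is nothing to put a derivative on; for the $e^{(t-\tau)\Delta}F(\tau)$ half of $B(t)$ the best available with \eqref{eq:6.3} is $(t-\tau)^{-\frac{k-1}{2}}\|\nabla F(\tau)\|_2$, which still fails to be integrable at $k=3$. A Fourier-side check shows the intermediate quantity $\int_{t/2}^{t}N(\tau)G_{t-\tau}\,d\tau$ genuinely fails to lie in $\dot H^k$ for such $k$, so this is not mere bookkeeping.

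The paper avoids this by never introducing the heat kernel at short times: the near-diagonal slice is kept as $A_2(t)=\int_{t/2}^{t}K_1(t-\tau)F(\tau)\,d\tau$ and estimated directly with \eqref{eq:4.8}, whose polynomial factor is $(1+t-\tau)^{-\cdots}$ (the propagator $K_1(s)$ has no smoothing singularity as $s\to 0$), while the matching Gaussian contribution is collected as the tail $A_5(t)=-\int_{t/2}^{\infty}N(\tau)\,d\tau\cdot G_t$, which involves only $G_t$ and is harmless. Your term $A(t)$ (Lemma \ref{Lem:4.5} also carries $(1+t-\tau)$ factors), the $[0,\frac{t}{2}]$ part of $B(t)$, and the $[0,\frac{t}{2}]$ part of $D(t)$ are all sound; the paper handles their analogues as $A_1$, $A_3+A_4$ and $A_3$ respectively, with $A_4$ treated by an explicit mean-value and near/far splitting in $y$ rather than your dominated-convergence appeal to \eqref{eq:2.7}, which would additionally require a uniform-in-$\tau$ dominating bound to be rigorous. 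To repair your argument, restrict the Gaussian replacement to $[0,\frac{t}{2}]$ and treat $[\frac{t}{2},t]$ with $K_1$ directly; this is precisely the paper's five-term splitting.
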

As a first step of the proof of Proposition \ref{Prop:6.1}, 
we split the nonlinear term into five parts.
Namely, we see that
\begin{equation*}
\begin{split}
& \int_{0}^{t} K_{1}(t-\tau) F(\tau) d \tau 
- \int_{0}^{\infty} \int_{\R^{n}} F(\tau, y) dy d \tau \cdot G_{t}(x) \\
& 
= 
\int_{0}^{\frac{t}{2}}( K_{1}(t-\tau)-e^{(t-\tau)\Delta}) F(\tau) d \tau 
+ \int_{\frac{t}{2}}^{t} K_{1}(t-\tau) F(\tau) d \tau \\
& + \int_{0}^{\frac{t}{2}}(e^{(t-\tau)\Delta} -e^{ t\Delta}) F(\tau) d \tau
 + \int_{0}^{\frac{t}{2}}\left( e^{t\Delta} F(\tau) - \int_{\R^{n}}F(\tau, y) dy\cdot G_{t}(x) \right) d \tau \\
& - \int_{\frac{t}{2}}^{\infty} 
\int_{\R^{n}} F(\tau, y) dy d\tau\cdot G_{t}(x),
\end{split}
\end{equation*}
and we define each terms as follows:
\begin{equation*}
\begin{split}
A_{1}(t)
& := \int_{0}^{\frac{t}{2}}( K_{1}(t-\tau)-e^{(t-\tau)\Delta}) F(\tau) d \tau, \\
A_{2}(t) 
& := \int_{\frac{t}{2}}^{t} K_{1}(t-\tau) F(\tau) d \tau, \ 
A_{3}(t) := \int_{0}^{\frac{t}{2}}(e^{(t-\tau)\Delta} -e^{ t\Delta}) F(\tau) d \tau, \\
A_{4}(t) &
:= \int_{0}^{\frac{t}{2}}\left( e^{t\Delta} F(\tau) - \int_{\R^{n}}F(\tau, y) dy\cdot G_{t}(x) \right) d \tau, \\
A_{5}(t) &:=  - \int_{\frac{t}{2}}^{\infty} 
\int_{\R^{n}} F(\tau, y) dy d \tau \cdot G_{t}(x).
\end{split}
\end{equation*}
In what follows, 
we estimate $A_{j}(t)$ for each $j=1, \cdots, 5$, respectively. 
\begin{lem} \label{Lem:6.2}
Under the assumption as in Proposition \ref{Prop:6.1},
there exists some constant $C>0$ such that
\begin{equation} \label{eq:6.5}
\| |\nabla_{x}|^{k} A_{1}(t) \|_{2}
\le
C(1+t)^{-\frac{n}{4}-\frac{k}{2}-1},
\end{equation}
\begin{equation} \label{eq:6.6}
\| |\nabla_{x}|^{k} A_{2}(t) \|_{2} \le Ct^{-\frac{n}{4}-\frac{k}{2}-\frac{n}{2}(p-1)+1}.
\end{equation}
\end{lem}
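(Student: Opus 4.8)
\textbf{Proof plan for Lemma \ref{Lem:6.2}.}

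The plan is to estimate $A_1(t)$ and $A_2(t)$ separately, in each case moving the $|\nabla_x|^k$ derivative inside the time integral and invoking the linear estimates already proved in Section 4 together with the decay assumptions \eqref{eq:6.1}--\eqref{eq:6.3} on $F$. For $A_1(t) = \int_0^{t/2}(K_1(t-\tau)-e^{(t-\tau)\Delta})F(\tau)\,d\tau$, the key tool is Lemma \ref{Lem:4.5}: applying it with $r=1$ and $\tilde k = 0$ gives, for $\tau \in [0, t/2]$ so that $t-\tau \sim t$,
\begin{equation*}
\| |\nabla_x|^k (K_1(t-\tau)-e^{(t-\tau)\Delta}) F(\tau) \|_2 \le C(1+t)^{-\frac n4 - \frac k2 - 1}\|F(\tau)\|_1 + C e^{-(t-\tau)}\||\nabla_x|^{(k-2)_+}F(\tau)\|_2.
\end{equation*}
Integrating over $\tau \in [0, t/2]$: the first term contributes $C(1+t)^{-\frac n4-\frac k2-1}\int_0^{t/2}(1+\tau)^{-\frac n2(p-1)}\,d\tau$, which is bounded by $C(1+t)^{-\frac n4-\frac k2-1}$ because $p > 1+\frac2n$ makes the $\tau$-integral convergent (or at worst log-divergent, which is still absorbed by a slightly smaller power — one can also just integrate up to $t/2$ and note the integral grows no faster than a positive power of $t$ strictly smaller than $1$). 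The exponentially damped second term, using $\|F(\tau)\|_2$ or the higher derivative bounds from \eqref{eq:6.2}--\eqref{eq:6.3} plus $e^{-(t-\tau)} \le e^{-t/2}$ on $[0,t/2]$, is $O(e^{-t/2})$ and hence negligible; here one should note that the hypothesis only supplies $\|F\|_1, \|F\|_2, \||\nabla_x|F\|_2$, so for $k \le 3$ the term $\||\nabla_x|^{(k-2)_+}F\|_2$ involves at most one derivative and is covered. This yields \eqref{eq:6.5}.

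For $A_2(t) = \int_{t/2}^t K_1(t-\tau)F(\tau)\,d\tau$, the derivative is now estimated by Lemma \ref{Lem:4.3}. On this range $1+\tau \sim 1+t$, so the decay in $F$ is favorable while the singularity sits at $t-\tau \to 0$. Splitting $|\nabla_x|^k = |\nabla_x|^{k-1}\cdot|\nabla_x|$ (for $k \ge 1$; for $k<1$ one puts all derivatives on the operator with $\tilde k = 0$) and applying Lemma \ref{Lem:4.3} with $r=2$, $\tilde k = 1$:
\begin{equation*}
\| |\nabla_x|^k K_1(t-\tau) F(\tau)\|_2 \le C(1+t-\tau)^{-\frac{k-1}{2}}\||\nabla_x|F(\tau)\|_2 + C e^{-(t-\tau)}\||\nabla_x|^{(k-2)_+}F(\tau)\|_2.
\end{equation*}
Using \eqref{eq:6.3} and $1+\tau \sim 1+t$ on $[t/2,t]$, the main term integrates to
\begin{equation*}
C(1+t)^{-\frac n2(p-1)-\frac n4-\frac12}\int_{t/2}^t (1+t-\tau)^{-\frac{k-1}{2}}\,d\tau \le C(1+t)^{-\frac n2(p-1)-\frac n4-\frac12}\cdot (1+t)^{1-\frac{k-1}{2}} = C(1+t)^{-\frac n4-\frac k2 - \frac n2(p-1)+1},
\end{equation*}
which is exactly \eqref{eq:6.6}; the exponentially damped term is again $O(e^{-t/2})$ times a polynomially bounded factor and thus absorbed. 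For $k < 1$ one uses instead Lemma \ref{Lem:4.3} with $\tilde k = 0$, $r=2$, getting $\|\,|\nabla_x|^k K_1(t-\tau)F(\tau)\|_2 \le C(1+t-\tau)^{-k/2}\|F(\tau)\|_2$, and the same integration produces $(1+t)^{-\frac n2(p-1)-\frac n4}(1+t)^{1-k/2}$, consistent with \eqref{eq:6.6} (one can also simply verify the two exponents agree).

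The main obstacle, to the extent there is one, is bookkeeping rather than conceptual: one must be careful about which derivative order is placed on the evolution operator versus on $F$ so as never to exceed the one derivative of $F$ that the hypotheses control (this forces the $r=2$, $\tilde k = 1$ choice in $A_2$ and the $k \le 3$ restriction keeps $(k-2)_+ \le 1$), and one must track that the near-diagonal singularity $(1+t-\tau)^{-(k-1)/2}$ or $(1+t-\tau)^{-k/2}$ is integrable on $[t/2,t]$ for $k \le 3$, which it is since the exponent exceeds $-1$ only when $k \le 3$ — at $k=3$ the integral $\int_{t/2}^t(1+t-\tau)^{-1}\,d\tau \sim \log t$ contributes a harmless logarithmic factor that is dominated by the gain $(1+t)^{-\frac n2(p-1)+1}$ present in \eqref{eq:6.6}. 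Everything else is a direct application of Lemmas \ref{Lem:4.3} and \ref{Lem:4.5} combined with \eqref{eq:6.1}--\eqref{eq:6.3}.
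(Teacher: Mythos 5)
Your proposal is correct and takes essentially the same route as the paper: for $A_1$ it applies Lemma \ref{Lem:4.5} with $r=1$, $\tilde k=0$ and uses $p>1+\frac{2}{n}$ to make the $\tau$-integral converge, while for $A_2$ it applies Lemma \ref{Lem:4.3} with $r=2$, $\tilde k=1$ (placing one derivative on $F$ via \eqref{eq:6.3}) and exploits $1+\tau\sim 1+t$ on $[t/2,t]$, exactly as in the paper. The paper's own computation likewise produces a logarithmic factor at $k=3$, so your observation that this log is harmless for the intended $o(t^{-\frac{n}{4}-\frac{k}{2}})$ conclusion, and your separate handling of $k<1$ with $\tilde k=0$, are consistent with (indeed slightly more careful than) the published argument.
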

\begin{proof}
First, we show the estimate \eqref{eq:6.5}.
By \eqref{eq:4.14} with $\tilde{k}=0$ and $r=1$, 
\eqref{eq:6.1} and \eqref{eq:6.3}, 
we see that 
\begin{equation*}
\begin{split}
\| |\nabla_{x}|^{k} A_{1}(t) \|_{2} 
& \le 
\int_{0}^{\frac{t}{2}} \|  |\nabla_{x}|^{k} ( K_{1}(t-\tau)-e^{(t-\tau)\Delta}) F(\tau) \|_{2} d \tau \\
& \le 
C \int_{0}^{\frac{t}{2}} (1+t- \tau)^{-\frac{n}{4}-\frac{k}{2}-1}  \| F(\tau) \|_{1} d \tau
+C \int_{0}^{\frac{t}{2}} e^{-(t-\tau)} \|\nabla F(\tau) \|_{2} d \tau \\
& \le C (1+t)^{-\frac{n}{4}-\frac{k}{2}-1} 
\int_{0}^{\frac{t}{2}} (1 + \tau)^{-\frac{n}{2}(p-1)} d \tau \\
& +C e^{-\frac{t}{2}} 
\int_{0}^{\frac{t}{2}}  (1 + \tau)^{-\frac{n}{2}(p-1)-\frac{n}{4}-\frac{1}{2}}  d \tau \\
& \le C (1+t)^{-\frac{n}{4}-\frac{k}{2}-1}, 
\end{split}
\end{equation*}
which is the desired estimate \eqref{eq:6.5}.
Next, we show the estimate \eqref{eq:6.6}.
By \eqref{eq:4.14} with $\tilde{k}=0$ and $r=2$, 
\eqref{eq:6.1} and \eqref{eq:6.3}, 
we see that 
\begin{equation*}
\begin{split}
\| |\nabla_{x}|^{k} A_{2}(t) \|_{2} 
& \le 
\int_{\frac{t}{2}}^{t} \|  |\nabla_{x}|^{k} ( K_{1}(t-\tau)-e^{(t-\tau)\Delta}) F(\tau) \|_{2} d \tau \\
& \le 
C \int_{\frac{t}{2}}^{t} (1+t- \tau)^{-\frac{k-1}{2}}  \|\nabla F(\tau) \|_{2} d \tau
+C \int_{\frac{t}{2}}^{t} e^{-(t-\tau)} \| \nabla F(\tau) \|_{2} d \tau \\
& \le C \int_{\frac{t}{2}}^{t}
 (1 + t-\tau)^{-\frac{k-1}{2}}
(1+\tau )^{-\frac{n}{4}-\frac{n}{2}(p-1)-\frac{1}{2}} 
 d \tau \\
& +C 
\int_{\frac{t}{2}}^{t} e^{-(t-\tau)} (1 + \tau)^{-\frac{n}{2}(p-1)-\frac{n}{4}-\frac{1}{2}}  d \tau \\
& \le C
\begin{cases} 
& (1+\tau )^{-\frac{n}{4}-\frac{k}{2}-\frac{n}{2}(p-1)+1} \ \text{for} \ 0 \le k <3, \\  
& (1+\tau )^{-\frac{n}{4}-\frac{n}{2}(p-1)-\frac{1}{2}} \log(2+t) \ \text{for} \ k=3,  
\end{cases}
\end{split}
\end{equation*}
which is the desired estimate \eqref{eq:6.5}.
We complete the proof of Lemma \ref{Lem:6.2}.
\end{proof}
For the terms $A_{j}(t)$, $j=3,4,5$, we can obtain the suitable decay estimates under the slightly weaker condition in Proposition \ref{Prop:6.1}. 
\begin{lem} \label{Lem:6.3}
Let $n \ge 1$, $k \ge 0$ and $p>1+\displaystyle{\frac{2}{n}}$.
Then the following estimates holds: 
\begin{equation} \label{eq:6.7}
\| |\nabla_{x}|^{k} A_{3}(t) \|_{2} 
\le 
\begin{cases}
& C t^{-\frac{n}{4}-\frac{k}{2}-1} \log(2+t), \quad p \ge 1+\frac{4}{n}, \\ 
& C t^{-\frac{n}{4}-\frac{k}{2}-\frac{n}{2}(p-1)+1}, \quad 1+\frac{2}{n} <p < 1+\frac{4}{n},
\end{cases}
\end{equation}
\begin{equation} \label{eq:6.8}
\| |\nabla_{x}|^{k} A_{4}(t) \|_{2} = o(t^{-\frac{n}{4}-\frac{k}{2}}), \quad (t \to +\infty),
\end{equation}
and 
\begin{equation} \label{eq:6.9}
\begin{split}
\| |\nabla_{x}|^{k} A_{5}(t) \|_{2} 
\le C t^{-\frac{n}{4}-\frac{k}{2}-\frac{n}{2}(p-1)+1}.
\end{split}
\end{equation}
\end{lem}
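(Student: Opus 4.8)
The plan is to estimate the three terms $A_3(t)$, $A_4(t)$, $A_5(t)$ separately, each reducing to a scalar integral in $\tau$ after applying the heat-semigroup decay/approximation estimates of Lemma~2.5 together with the hypotheses \eqref{eq:6.1}--\eqref{eq:6.3}. For $A_3(t)$, write $e^{(t-\tau)\Delta} - e^{t\Delta} = -\int_{t-\tau}^{t} \Delta e^{s\Delta}\,ds$ (or equivalently use the factor $\tau$-worth of extra smoothing via the mean value theorem in the time variable), so that $\||\nabla_x|^k A_3(t)\|_2 \le C\int_0^{t/2} \tau \, (t-\tau+\cdots)^{-\frac{n}{4}-\frac{k}{2}-1} \|F(\tau)\|_1 \, d\tau$; since $t-\tau \sim t$ on $[0,t/2]$ this is bounded by $C t^{-\frac{n}{4}-\frac{k}{2}-1}\int_0^{t/2} \tau (1+\tau)^{-\frac{n}{2}(p-1)}\,d\tau$, and the $\tau$-integral converges (giving the $\log$ when $\frac{n}{2}(p-1)=2$, i.e.\ $p = 1+\frac4n$) or grows like $t^{2-\frac{n}{2}(p-1)}$ (when $1+\frac2n < p < 1+\frac4n$), which yields \eqref{eq:6.7}. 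Here I would use \eqref{eq:4.14} with $r=1$ for the parabolic part and \eqref{eq:6.3} to absorb the exponentially small remainder as before.

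For $A_5(t)$, this is purely the tail of the time integral of $\|F(\tau)\|_1$ against the fixed Gauss kernel: $\||\nabla_x|^k A_5(t)\|_2 \le \left(\int_{t/2}^{\infty} \|F(\tau)\|_1\,d\tau\right)\||\nabla_x|^k G_t\|_2 \le C\left(\int_{t/2}^{\infty}(1+\tau)^{-\frac{n}{2}(p-1)}\,d\tau\right) t^{-\frac{n}{4}-\frac{k}{2}}$, and since $\frac n2(p-1) > 1$ the tail integral is $\le C t^{1-\frac{n}{2}(p-1)}$, giving exactly \eqref{eq:6.9}. The only point to note is that the integrability of $F$ in time (guaranteed by \eqref{eq:6.1} and $p>1+\frac2n$) is what makes this tail meaningful.

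The genuinely delicate term is $A_4(t)$, which is where the $o(\cdot)$ rather than $O(\cdot)$ must be extracted; the other four estimates are $O(t^{-\frac n4 - \frac k2 - \delta})$ for some $\delta>0$ and hence automatically $o(t^{-\frac n4-\frac k2})$. For $A_4(t) = \int_0^{t/2}\big(e^{t\Delta}F(\tau) - \int_{\R^n}F(\tau,y)\,dy\cdot G_t\big)d\tau$, I would fix a large parameter $R>0$, split the $\tau$-integral at $\tau = R$, and treat $\int_R^{t/2}$ crudely: by \eqref{eq:2.7} each integrand is $o(t^{-\frac n4-\frac k2})$ pointwise in $\tau$, but more usefully $\|e^{t\Delta}F(\tau)\|_2 + |m_F(\tau)|\|G_t\|_2 \le C t^{-\frac n4 -\frac k2}\|F(\tau)\|_1$ with $\|F(\tau)\|_1 \le C(1+\tau)^{-\frac n2(p-1)}$, so $\int_R^{t/2}$ contributes at most $C t^{-\frac n4 -\frac k2}\int_R^{\infty}(1+\tau)^{-\frac n2(p-1)}d\tau$, which is $\le \varepsilon t^{-\frac n4-\frac k2}$ once $R$ is large (using $\frac n2(p-1)>1$). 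For the remaining $\int_0^R$, with $R$ now fixed, apply \eqref{eq:2.7}: for each fixed $\tau\in[0,R]$, $t^{\frac n4+\frac k2}\||\nabla_x|^k(e^{t\Delta}F(\tau) - m_F(\tau)G_t)\|_2 \to 0$ as $t\to\infty$, and dominated convergence over the finite interval $[0,R]$ (with dominating function $C(1+\tau)^{-\frac n2(p-1)} \in L^1(0,R)$) gives $t^{\frac n4+\frac k2}\||\nabla_x|^k\int_0^R(\cdots)d\tau\|_2 \to 0$. Combining, $\limsup_{t\to\infty} t^{\frac n4+\frac k2}\||\nabla_x|^k A_4(t)\|_2 \le \varepsilon$ for every $\varepsilon>0$, hence $\||\nabla_x|^k A_4(t)\|_2 = o(t^{-\frac n4-\frac k2})$, which is \eqref{eq:6.8}. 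The main obstacle is precisely this $A_4$ argument: one must be careful that the diffusion-approximation convergence in \eqref{eq:2.7} is not uniform in $\tau$, so a direct interchange of limit and integral fails, and the $R$-splitting with a uniform-in-$t$ dominating bound on the tail is what repairs it.
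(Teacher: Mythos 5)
Your treatment of $A_{3}$ and $A_{5}$ coincides with the paper's: the mean value theorem in $t$ (equivalently the integral of $\Delta e^{s\Delta}$) combined with \eqref{eq:2.6} and \eqref{eq:6.1} gives \eqref{eq:6.7}, and the tail estimate $\int_{t/2}^{\infty}\|F(\tau)\|_{1}\,d\tau\cdot\||\nabla_{x}|^{k}G_{t}\|_{2}$ gives \eqref{eq:6.9}; one small inaccuracy is your reference to \eqref{eq:4.14} and \eqref{eq:6.3} for $A_{3}$ --- that term contains only heat semigroups, so there is no exponential remainder to absorb and \eqref{eq:2.6} alone suffices. For the delicate term $A_{4}$, however, your argument is genuinely different from the paper's. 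The paper keeps the full $\tau$-integral and instead splits the spatial convolution at $|y|=t^{1/4}$: on $|y|\le t^{1/4}$ it applies the mean value theorem in $x$, $G_{t}(x-y)-G_{t}(x)=-y\cdot\nabla_{x}G_{t}(x-\tilde\theta y)$, which produces the quantitative gain $t^{-1/4}$ (estimate \eqref{eq:6.11}), while on $|y|\ge t^{1/4}$ it uses only the absolute continuity of $\int_{0}^{\infty}\int_{\R^{n}}|F|\,dy\,d\tau$ to get an $o(1)$ factor. You instead split the time integral at $\tau=R$, control the range $[R,t/2]$ uniformly in $t$ by the integrability in time of $\|F(\tau)\|_{1}$ (which requires $\frac{n}{2}(p-1)>1$, i.e.\ $p>1+\frac{2}{n}$, exactly the standing hypothesis), and pass to the limit on $[0,R]$ via \eqref{eq:2.7} and dominated convergence. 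Both arguments are correct; yours is slightly more abstract and reuses the black-box heat-kernel approximation \eqref{eq:2.7} rather than reproving a version of it by hand, whereas the paper's spatial splitting yields an explicit algebraic rate $t^{-1/4}$ for the near-field part at the cost of a purely qualitative far-field term. Your closing remark that the failure of uniformity in $\tau$ of \eqref{eq:2.7} is the real obstruction, repaired by the $\varepsilon$--$R$ splitting, is exactly the right diagnosis.
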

\begin{rem}
Here we note that $-\displaystyle{\frac{n}{2}}(p-1)+2<1$, since we have just assumed $p>1+\displaystyle{\frac{2}{n}}$.
Therefore we have
\begin{equation*}
-\frac{n}{4}-\frac{k}{2}-1-\frac{n}{2}(p-1)+2 < -\frac{n}{4}-\frac{k}{2},
\end{equation*}
which means the desired estimate 
$t^{\frac{n}{4}+ \frac{k}{2}}  \| |\nabla_{x}|^{k} A_{j}(t) \|_{2} \to 0$ as $t \to \infty$ for $j=2,3,5$.
\end{rem}
\begin{proof}
At first, we prove the estimate \eqref{eq:6.7}.
Observing that there exists $\theta \in [0,1]$ such that 
\begin{equation*}
G_{t-\tau}(x-y) -G_{t}(x-y)
= 
(-\tau) \partial_{t} G_{t-\theta \tau}(x-y)
\end{equation*}
by the mean value theorem on $t$, 
we can apply the estimate \eqref{eq:2.6} with $\tilde{k}=0$, $\ell=1$ and $r=1$ to have
\begin{equation*}
\begin{split}
\| |\nabla_{x}|^{k} A_{3}(t) \|_{2} 
& \le 
\int_{0}^{\frac{t}{2}} \| |\nabla_{x}|^{k} (e^{(t-\tau)\Delta} -e^{ t\Delta}) F(\tau) \|_{2} d \tau \\
& = 
\int_{0}^{\frac{t}{2}} \tau \| |\nabla_{x}|^{k} \partial_{t} e^{(t-\theta \tau)\Delta} F(\tau) \|_{2} d \tau \\
& \le C  
\int_{0}^{\frac{t}{2}} \tau (t- \tau)^{-\frac{n}{4}-\frac{k}{2}-1}  \| F(\tau) \|_{1} d \tau \\
& \le C t^{-\frac{n}{4}-\frac{k}{2}-1} 
\int_{0}^{\frac{t}{2}} \tau (1 + \tau)^{-\frac{n}{2}(p-1)} d \tau \\
& \le 
\begin{cases}
& C t^{-\frac{n}{4}-\frac{k}{2}-1} \log(2+t), \quad p \ge 1+\frac{4}{n}, \\ 
& C t^{-\frac{n}{4}-\frac{k}{2}-\frac{n}{2}(p-1)+1}, \quad 1+\frac{2}{n} <p < 1+\frac{4}{n}, 
\end{cases}
\end{split}
\end{equation*}
and we have just proved the estimate \eqref{eq:6.7}.
Next, we prove the estimate \eqref{eq:6.8}.
To show the estimate for $A_{4}(t)$, 
we firstly divide the integrand into two parts:
\begin{equation} \label{eq:6.10}
\begin{split}
& e^{t \Delta} F(\tau, x) - \int_{\R^{n}}F(\tau,y) dy G_{t}(x) \\
& = \int_{|y| \le t^{\frac{1}{4}}} +\int_{|y| \ge t^{\frac{1}{4}}}
(G_{t}(x-y) -G_{t}(x)) F(\tau,y) dy 
=: A_{41}(t) +A_{42}(t).
\end{split}
\end{equation}
In what follows, we estimate $A_{41}(t)$ and $A_{42}(t)$, respectively.
For the estimate for $A_{41}(t)$,
we apply the mean value theorem again on $x$ to have 
\begin{equation*}
G_{t}(x-y) -G_{t}(x)
= 
(-y) \cdot \nabla_{x} G_{t}(x-\tilde{\theta} y)
\end{equation*}
for some $\tilde{\theta} \in [0,1]$, where $\cdot$ denotes the standard Euclid inner product.
Then we arrive at the estimate 
\begin{equation} \label{eq:6.11}
\begin{split}
\| |\nabla_{x}|^{k} A_{41}(t) \|_{2} 
& \le 
\int_{0}^{\frac{t}{2}} 
\int_{|y| \le t^{\frac{1}{4}}}
\left\|  |\nabla_{x}|^{k} 
(G_{t}(x-y) -G_{t}(x))
\right\|_{L^{2}_{x}}
 |F(\tau,y)| dy 
 d \tau \\
& = 
\int_{0}^{\frac{t}{2}} 
\int_{|y| \le t^{\frac{1}{4}}}
\left\| 
(-y) \cdot  |\nabla_{x}|^{k} \nabla_{x} G_{t}(x-\tilde{\theta} y)
\right\|_{L^{2}_{x}}
 |F(\tau,y)| dy
 d \tau \\
& \le C  t^{-\frac{n}{4}-\frac{k}{2}-\frac{1}{2} +\frac{1}{4}} 
\int_{0}^{\frac{t}{2}}  \| F(\tau) \|_{1} d \tau \\
& \le C t^{-\frac{n}{4}-\frac{k}{2}-\frac{1}{4}} 
\int_{0}^{\frac{t}{2}} (1 + \tau)^{-\frac{n}{2}(p-1)} d \tau 
 \le C t^{-\frac{n}{4}-\frac{k}{2}-\frac{1}{4}},
\end{split}
\end{equation}
by the direct calculation.
On the other hand, 
for the term $A_{42}(t)$, 
we recall the fact that $\displaystyle{\int_{0}^{\infty}\int_{\R^{n}}} |F(\tau, y)| dy d \tau < \infty$
implies 
\begin{equation*}
\lim_{t \to \infty}
\int_{0}^{\infty} 
\int_{|y| \ge t^{\frac{1}{4}}}
|F(\tau, y)| dy d \tau =0.
\end{equation*}
Thus we see that 
\begin{equation*}
\begin{split}
& \| |\nabla_{x}|^{k} A_{42}(t) \|_{2} \\
& \le 
\int_{0}^{\frac{t}{2}} 
\int_{|y| \ge t^{\frac{1}{4}}}
(\left\|  |\nabla_{x}|^{k} 
G_{t}(x-y) 
\right\|_{L^{2}_{x}}
+
\left\|  |\nabla_{x}|^{k} G_{t}(x)
\right\|_{L^{2}_{x}}
)
 |F(\tau,y)| dy 
 d \tau \\
& \le C t^{-\frac{n}{4}-\frac{k}{2}}
\int_{0}^{\infty} 
\int_{|y| \ge t^{\frac{1}{4}}}
 |F(\tau,y)| dy
 d \tau,
\end{split}
\end{equation*}
and then 
\begin{equation} \label{eq:6.12}
t^{\frac{n}{4}+\frac{k}{2}} \| |\nabla_{x}|^{k} A_{42}(t) \|_{2} \to 0
\end{equation}
as $t \to \infty$.
Therefore combining the estimates \eqref{eq:6.10} and \eqref{eq:6.12},
we have the desired estimate \eqref{eq:6.8}.
Finally, we show the estimate \eqref{eq:6.9}.
By the combination of \eqref{eq:6.1} and the direct calculation,
we get
\begin{equation*}
\begin{split}
\| |\nabla_{x}|^{k} A_{5}(t) \|_{2} 
& \le 
\int_{\frac{t}{2}}^{\infty} 
\|F(\tau)\|_{1} d \tau \cdot \|  |\nabla_{x}|^{k} G_{t}\|_{2} \\
& \le 
\int_{\frac{t}{2}}^{\infty} 
(1+\tau)^{-\frac{n}{2}(p-1)} d \tau\cdot  \|  |\nabla_{x}|^{k} G_{t}\|_{2}
\le C t^{-\frac{n}{4}-\frac{k}{2}-\frac{n}{2}(p-1)+1},
\end{split}
\end{equation*}
which is the desired estimate \eqref{eq:6.9}.
We complete the proof of Lemma \ref{Lem:6.3}.
\end{proof}
%

%
\begin{proof}[Proof of Proposition \ref{Prop:6.1}]
For $0 \le k \le 3$, 
Lemmas \ref{Lem:6.2} and \ref{Lem:6.3} immediately yield the estimate \eqref{eq:6.4}.
Indeed, from the estimates \eqref{eq:6.5}-\eqref{eq:6.8}, it follows that 
\begin{equation*}
\begin{split}
& t^{\frac{n}{4}+\frac{k}{2}}
\left\| 
|\nabla|^{k}
\left(
\int_{0}^{t} K_{1}(t-\tau) F(\tau) d \tau 
- \int_{0}^{\infty} \int_{\R^{n}}  
 F(\tau, y) dy d \tau\cdot G_{t}(x) 
\right)
\right\|_{2} \\
& \le 
C
t^{\frac{n}{4}+\frac{k}{2}}
\sum_{j=1}^{5}
\| |\nabla|^{k} A_{j}(t) \|_{2}
\to 0
\end{split}
\end{equation*}
as $t \to \infty$, which is the desired conclusion.
\end{proof}
Now we are in a position to prove Theorem \ref{Thm:1.2}.
\begin{proof}[Proof of Theorem \ref{Thm:1.2}]
From the proof of Theorem \ref{Thm:1.1}, 
we see that the nonlinear term $f(u)$ satisfies the conditions \eqref{eq:6.1} - \eqref{eq:6.3}.
Then we can apply Proposition \ref{Prop:6.1} as $F(\tau, y) = f(u(\tau,y))$,
and the proof is now complete.
\end{proof}
\section{Proof of the finite time blow-up of solutions}
In this section, 
applying the method in \cite{Z} and \cite{T}, 
we show the nonexistence of any global solutions when $1 < p \le 1+\displaystyle{\frac{2}{n}}$, 
even if the initial data is small. 

Now, we follow the notation used in \cite{Z}.
Let $\phi= \phi(|x|) \in C^{\infty}([0, \infty))$ satisfy 
\begin{align*}
& 0 \le \phi \le 1, \ \phi(|x|) \equiv 1 \ \text{on}\ |x| \in \left[0, \frac{1}{2} \right],  
\quad \phi(|x|) \equiv 0\ \text{on} \ |x| \in [1, \infty), \\ 
& -C \le \phi'(|x|) \le 0, \quad |\phi''(|x|)| \le C, \quad
\frac{|\nabla \phi|^{2}}{\phi} \le C 
\end{align*}
for some constant $C>0$.
Also let
$\eta= \eta(t) \in C^{\infty}([0, \infty))$
satisfying 
\begin{align*}
& 0 \le \eta \le 1, \ \eta(t) \equiv 1 \ \text{on}\ t \in \left[0, \frac{1}{4} \right],  
\quad \eta(t) \equiv 0\ \text{on} \ t \in [1, \infty), \\ 
& -C \le \eta'(t) \le 0, \quad |\eta''(t)| \le C, 
\quad \frac{|\eta'|^{2}}{\eta} \le C 
\end{align*}
for some constant $C>0$.
Here we define $\phi_{R}=\phi_{R}(|x|)$ and $\eta_{R} = \eta_{R}(t)$ as 
\begin{align} \label{eq:7.1}
\phi_{R}=\phi \left(\frac{|x|}{R} \right), \quad \eta_{R} = \eta\left(\frac{t}{R^{2}} \right)
\end{align}
and the definition \eqref{eq:7.1} immediately yields useful estimates as follows:
\begin{align}
& 0 \le \phi_{R} \le 1, 
\ \phi_{R} \equiv 1 \ \text{on}\ |x| \in \left[0, \frac{R}{2} \right],  
\quad \phi_{R} \equiv 0\ \text{on} \ |x| \in [R, \infty), \label{eq:7.2} \\ 
& -\frac{C}{R} \le \phi_{R}' \le 0, \quad |\phi_{R}''| \le \frac{C}{R^{2}}, \label{eq:7.3} \\
& \frac{|\nabla \phi_{R}|^{2}}{\phi_{R}} \le \frac{C}{R^{2}}, \label{eq:7.4}\\ 
& 0 \le \eta_{R} \le 1, \ \eta_{R} \equiv 1 \ \text{on}\ t \in \left[0, \frac{R^{2}}{4} \right],  
\quad \eta_{R} \equiv 0\ \text{on} \ t \in [R^{2}, \infty), \label{eq:7.5}\\ 
&  |\eta'_{R}| \le \frac{C}{R^{2}}, \quad |\eta_{R}''| \le \frac{C}{R^{4}}, \label{eq:7.6}\\
& \frac{|\eta_{R}'|^{2}}{\eta_{R}} \le \frac{C}{R^{4}}. \label{eq:7.7} 
\end{align}
We also note that a direct calculation gives 
\begin{equation*}
-\Delta \phi_{R}^{p'}
= p' \phi_{R}^{p'-1}(-\Delta \phi_{R}) + p'(p'-1)\phi_{R}^{p'-2} |\nabla \phi_{R}|^{2}
\end{equation*}
where $\displaystyle{\frac{1}{p}}+\displaystyle{\frac{1}{p'}}=1$ and then we see
\begin{equation} \label{eq:7.8}
\begin{split}
|\Delta \phi_{R}^{p'}|
\le C \phi_{R}^{p'-1}|\Delta \phi_{R}| + C\phi_{R}^{p'-2} |\nabla \phi_{R}|^{2} \le \frac{C}{R^{2}} \phi_{R}^{p'-1}
\end{split}
\end{equation}
by the estimates \eqref{eq:7.3} and \eqref{eq:7.4}. 
Now we introduce the functional $K_{R}$ by 
\begin{equation} \label{eq:7.9}
K_{R}:= \int_{0}^{R^{2}} \int_{B_{R}(0)} 
|u|^{p} \phi_{R}^{p'} \eta_{R}^{p'} dx dt,
\end{equation}
where $R>0$.

At the rest of this section one divides the proof into three parts.
The first lemma is concerned with the lower bound of $K_{R}$.
The important fact is that the lower bound is independent of $R$.
\begin{lem} \label{Lem:7.1}
Let $u$ be the global solution of \eqref{eq:1.1}.
Then there exists a constant $R_{0}>0$ such that the following estimate holds good: 
\begin{align} \label{eq:7.10}
K_{R}[u] \ge  C_{0}  
\end{align}
for some constant $C_{0}>0$, which is independent of $R$ satisfying $R \ge R_{0}$. 
\end{lem}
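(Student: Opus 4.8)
\textbf{Proof proposal for Lemma \ref{Lem:7.1}.}

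The plan is to multiply the equation \eqref{eq:1.1} by the test function $\psi_R := \phi_R^{p'}\eta_R^{p'}$ and integrate over the space-time slab $[0,R^2]\times B_R(0)$. Since $\psi_R$ is compactly supported in $t$ (vanishing for $t\geq R^2$) and in $x$ (vanishing for $|x|\geq R$), all boundary terms at $t=R^2$ and on $|x|=R$ drop out, and the only surviving boundary contribution comes from $t=0$, producing the data integrals $\int (u_0+u_1)\psi_R(0,\cdot)\,dx$ after integrating by parts twice in $t$ on the $\partial_t^2 u$ term and once in $t$ on the two first-order-in-$t$ terms. First I would carry out this weak formulation carefully: the term $\partial_t^2 u$ gives $\int\int u\,\partial_t^2\psi_R$ plus the data terms $-\int u_1\psi_R(0)\,dx + \int u_0\,\partial_t\psi_R(0)\,dx$; since $\eta_R\equiv 1$ near $t=0$ we have $\partial_t\psi_R(0,\cdot)=0$, so this reduces to $+\int u_1\phi_R^{p'}\,dx$ (note $\eta_R(0)=1$). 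The frictional term $\partial_t u$ gives $-\int\int u\,\partial_t\psi_R$ plus $-\int u_0\phi_R^{p'}\,dx$. The $-\Delta u$ term gives $-\int\int u\,\Delta_x\psi_R$, and the viscoelastic term $-\Delta\partial_t u$ gives $+\int\int u\,\Delta_x\partial_t\psi_R$ plus the boundary contribution $+\int u_0\,\Delta_x\psi_R(0)\,dx = \int u_0\,\Delta_x(\phi_R^{p'})\,dx$ (again using $\partial_t\psi_R(0)=0$ for the time derivative falling on $\eta_R$, but the spatial Laplacian survives). Collecting everything, the identity reads
\begin{equation*}
\int_0^{R^2}\!\!\int_{B_R} f(u)\,\psi_R\,dx\,dt
= \int_{\R^n}(u_0(x)+u_1(x))\,\phi_R^{p'}\,dx
- \int_{\R^n}u_0(x)\,\Delta_x\bigl(\phi_R^{p'}\bigr)\,dx
+ \mathcal{R}_R,
\end{equation*}
where $\mathcal{R}_R$ collects the four space-time integrals $\int\int u\,(\partial_t^2\psi_R - \partial_t\psi_R - \Delta\psi_R + \Delta\partial_t\psi_R)$.

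Next I would show $\mathcal{R}_R$ is controlled by $\varepsilon K_R$ for any fixed small $\varepsilon>0$ once $R$ is large, via Hölder's inequality with exponents $p$ and $p'$ applied on the support of $\psi_R$. The key is the standard trick of writing, e.g., $|u\,\partial_t^2\psi_R| = |u|\phi_R^{p'/p}\eta_R^{p'/p}\cdot \phi_R^{p'/p'}\eta_R^{-p'/p}|\partial_t^2(\eta_R^{p'})|\cdots$, so that one factor reconstructs $|u|^p\psi_R$ (giving $K_R^{1/p}$ after integration) and the other factor involves derivatives of $\phi_R,\eta_R$ divided by appropriate powers, which are $O(R^{-2})$, $O(R^{-4})$ etc. by \eqref{eq:7.3}, \eqref{eq:7.4}, \eqref{eq:7.6}, \eqref{eq:7.7}, \eqref{eq:7.8}. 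Integrating these bounds over the slab of measure $\sim R^{2}\cdot R^{n}=R^{n+2}$ yields $|\mathcal{R}_R|\leq C R^{-\kappa}\,R^{(n+2)/p'}\,K_R^{1/p}$ for a suitable $\kappa>0$; since $1<p\leq 1+2/n$ forces $n+2 - p'\cdot 0 \cdots$ — more precisely the exponent of $R$ in the prefactor is nonpositive exactly in the subcritical/critical range — we get $|\mathcal{R}_R|\leq C\,K_R^{1/p}$ uniformly in $R$, hence by Young's inequality $|\mathcal{R}_R|\leq \tfrac12 K_R + C'$ with $C'$ independent of $R$. This is the step I expect to be the main obstacle: tracking the exact powers of $R$ and verifying that the critical exponent $p=1+2/n$ is precisely where the $R$-power turns nonpositive, so that the constant $C'$ does not blow up as $R\to\infty$.

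Finally I would handle the right-hand side data terms. The assumption $\int_{\R^n}u_i(x)\,dx>0$ for $i=0,1$ together with $u_i\in L^1$ gives, by dominated convergence and $\phi_R\to 1$ pointwise, that $\int(u_0+u_1)\phi_R^{p'}\,dx \to \int(u_0+u_1)\,dx =: 2C_1>0$, so this term is $\geq C_1$ for $R\geq R_0$. The second data term is $\int u_0\,\Delta(\phi_R^{p'})\,dx$; using \eqref{eq:7.8}, $|\Delta(\phi_R^{p'})|\leq C R^{-2}\phi_R^{p'-1}$ supported in $R/2\leq|x|\leq R$, so $|\int u_0\,\Delta(\phi_R^{p'})\,dx|\leq C R^{-2}\|u_0\|_{L^1(|x|\geq R/2)}\to 0$ as $R\to\infty$ (in fact it also uses $u_0\in W^{2,1}$ to integrate by parts if one prefers, but the crude bound suffices). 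Combining, for $R\geq R_0$ large enough,
\begin{equation*}
K_R \geq \int_0^{R^2}\!\!\int_{B_R} f(u)\,\psi_R\,dx\,dt
\geq C_1 - o(1) - |\mathcal{R}_R| \geq C_1 - o(1) - \tfrac12 K_R - C',
\end{equation*}
and I would reorganize this as $\tfrac32 K_R \geq C_1 - C' - o(1)$; choosing the normalization of $C'$ (which comes from Young's inequality and can be made as small as desired at the cost of enlarging the coefficient of $K_R$, so one instead takes $|\mathcal R_R|\le \tfrac14 K_R + C'$ with $C'<C_1/2$, say) gives $K_R\geq C_0 := C_1/2$ for all $R\geq R_0$, which is exactly \eqref{eq:7.10}. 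Here I am implicitly using $f(u)\geq |u|^p$ or $f(u)=|u|^p$ (the nonlinearity is a power-type absorbing the sign appropriately) so that $\int\int f(u)\psi_R = \int\int|u|^p\psi_R = K_R$; if $f$ is merely $\geq|u|^p$ the inequality only helps. \qed
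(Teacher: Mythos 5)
Your approach is genuinely different from the paper's, and as written it contains a sign error that breaks the key inequality. When you integrate the time-derivative terms by parts against $\psi_R=\phi_R^{p'}\eta_R^{p'}$, the surviving boundary contributions at $t=0$ are $-\int u_1\psi_R(0)\,dx$ (from $\partial_t^2u$, since $\bigl[\int\partial_t u\,\psi_R\bigr]_0^{R^2}=-\int u_1\psi_R(0)$), $-\int u_0\psi_R(0)\,dx$ (from $\partial_t u$), and $+\int u_0\Delta(\phi_R^{p'})\,dx$ (from $-\Delta\partial_t u$); you state these correctly term by term, but your collected identity flips the sign of the first two. The correct identity is
\begin{equation*}
\int_0^{R^2}\!\!\int_{B_R} f(u)\,\psi_R\,dx\,dt \;+\;\int_{\R^n}(u_0+u_1)\,\phi_R^{p'}\,dx
\;=\;\int_{\R^n}u_0\,\Delta\bigl(\phi_R^{p'}\bigr)\,dx+\mathcal{R}_R ,
\end{equation*}
so the data term sits on the \emph{same} side as $K_R$ and your inequality $K_R\ge C_1-o(1)-|\mathcal{R}_R|$ is not available; what you actually get is $K_R+C_1-o(1)\le |\mathcal{R}_R|$, an upper-bound-type relation (this is essentially the paper's Lemma 7.3). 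The strategy is salvageable — since $|\mathcal{R}_R|\le CK_R^{1/p}R^{(n+2)/p'-2}\le CK_R^{1/p}$ for $R\ge1$ and $p\le 1+2/n$, one can still conclude $C_1/2\le CK_R^{1/p}$ and hence $K_R\ge (C_1/(2C))^p>0$ — but that is a different closing argument from the one you wrote. Separately, your Young's-inequality step "$|\mathcal{R}_R|\le\tfrac14K_R+C'$ with $C'<C_1/2$" is unjustified: with the coefficient of $K_R$ fixed at $\tfrac14$, the constant $C'$ is determined by the cutoff constants alone and bears no relation to the (possibly small) quantity $C_1=\tfrac12\int(u_0+u_1)$; one must instead take the coefficient of $K_R$ large to make the additive constant small, or avoid Young altogether as above.

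For contrast, the paper's proof of Lemma 7.1 never tests the equation at all. It fixes $r_0$ with $\int_{B_{r_0}(0)}u_0\,dx>0$, uses continuity of $t\mapsto f(t):=\int_{B_{r_0}(0)}u(t,x)\,dx$ to get $f>0$ on some $[0,T_0]$, applies H\"older on the fixed ball to get $f(t)^p\le Cr_0^{np/p'}\int_{B_R(0)}|u|^p\phi_R^{p'}\,dx$, and integrates over $[0,T_0]$ where $\eta_R\equiv1$ once $R^2/4\ge T_0$; the constant $C_0=Cr_0^{-np/p'}\int_0^{T_0}f(t)^p\,dt$ is then manifestly independent of $R$. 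This is more elementary (no weak formulation, no restriction $p\le 1+2/n$, no need for $f(u)=|u|^p$ at this stage) and cleanly separates the lower bound from the upper bound of Lemma 7.3, whereas your route, even after correction, essentially merges the two.
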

\begin{proof}
This proof is based on an idea of \cite{T}.
Without loss of generality,
we assume that
$\displaystyle{\int_{\mathbb R^{n}}} u_{0}(x) dx >0$.
Then,
there exists $r_{0}>0$ such that $\int_{B_{R}(0)} u_{0}(x) dx >0$ for $R \ge r_{0}$.
Let $f(t) := \displaystyle{\int_{B_{r_{0}}(0)}} u(t,x) dx$.
Since $f(0)>0$,
there exists $T_{0}>0$ such that,
for any $t \in [0,
T_{0}]$,
we have $f(t) >0$.
On the other hand, 
it follows from the H\"older inequality 
and choosing $R \ge r_{0}$ with $B_{r_{0}}(0) \subset B_{\frac{R}{2}}(0)$ that
\begin{align*}
f(t)  \le \left(
\int_{B_{r_{0}}(0)} |u|^{p} dx
\right)^{\frac{1}{p}}
\left(
\int_{B_{r_{0}}(0)} dx
\right)^{\frac{1}{p'}} 
\le C \left(
\int_{B_{R}(0)} |u|^{p} \phi_{R}^{p'} \, dx
\right)^{\frac{1}{p}}
r_{0}^{\frac{n}{p'}}.
\end{align*}
Moreover, if we choose $R > 0$ so large such as
$\displaystyle{\frac{R^{2}}{4}} \ge T_{0}$, then this leads to
\begin{align*}
C r_{0}^{-\frac{n}{p'}} 
\int_{0}^{T_{0}} f(t)^{p} dt \le \int_{0}^{R^{2}} 
\int_{B_{R}(0)} |u|^{p} \phi_{R}^{p'} \eta_{R}^{p'}\,
 dx dt =K_{R}[u].
\end{align*}
Therefore, by setting
\begin{align*}
C_{0}:=  C 
r_{0}^{-\frac{pn}{p'}} 
\int_{0}^{T_{0}} f(t)^{p} dt,
\end{align*}
and $R_{0}$ choosing as $R \ge 2r_{0}$ and $\displaystyle{\frac{R^{2}}{4}} \ge T_{0}$, 
we obtain \eqref{eq:7.10} for $R \ge R_{0}$ and the proof of Lemma \ref{Lem:7.1} is now complete.
\end{proof}
The second step is our new ingredient in this proof.
Roughly speaking, the lemma implies that we can estimate the contribution of the viscoelastic term.    
\begin{lem} \label{Lem:7.2}
Let $u_{0} \in W^{2,1} \cap W^{2, \infty}$.
Then it holds that
\begin{equation} \label{eq:7.11}
\lim_{R \to \infty}
\int_{B_{R}(0)} (-\Delta u_{0}(x) ) \phi_{R}^{p'} dx =0.
\end{equation}
\end{lem}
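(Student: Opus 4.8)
The plan is to exploit the scaling structure of $\phi_R^{p'}$ together with the integrability of $-\Delta u_0$. The starting point is to integrate by parts and move the Laplacian off $u_0$: since $u_0 \in W^{2,1}$ and $\phi_R^{p'}$ is smooth and compactly supported, we have
\[
\int_{B_R(0)} (-\Delta u_0(x))\,\phi_R^{p'}\,dx
= -\int_{B_R(0)} u_0(x)\,\Delta\bigl(\phi_R^{p'}\bigr)\,dx .
\]
Then I would apply the pointwise bound \eqref{eq:7.8}, namely $|\Delta \phi_R^{p'}| \le C R^{-2}\phi_R^{p'-1} \le C R^{-2}$ on $B_R(0)$, to obtain
\[
\left| \int_{B_R(0)} (-\Delta u_0(x))\,\phi_R^{p'}\,dx \right|
\le \frac{C}{R^{2}} \int_{B_R(0)} |u_0(x)|\,dx
\le \frac{C}{R^{2}} \| u_0 \|_{1}.
\]
Since $u_0 \in L^1$, the right-hand side tends to $0$ as $R \to \infty$, which gives \eqref{eq:7.11}.

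An alternative route, avoiding integration by parts entirely, is to write the integral as $\int_{\R^n} (-\Delta u_0)\,\phi_R^{p'}\,dx$ and split it as $\int_{\R^n}(-\Delta u_0)\,dx$ — which vanishes because $-\Delta u_0 \in L^1$ and is a pure divergence, hence has zero integral over $\R^n$ — plus the error term $\int_{\R^n}(-\Delta u_0)(\phi_R^{p'}-1)\,dx$. On the region $|x|\le R/2$ we have $\phi_R^{p'}-1 = 0$ by \eqref{eq:7.2}, so the error is supported on $|x|\ge R/2$, where $|\phi_R^{p'}-1|\le 1$, giving the bound $\int_{|x|\ge R/2}|\Delta u_0|\,dx \to 0$ by dominated convergence. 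Either way the conclusion is the same; I would present the integration-by-parts version since it reuses \eqref{eq:7.8} directly and needs only $u_0\in L^1$ rather than knowledge that $\int_{\R^n}\Delta u_0 = 0$.

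I expect no serious obstacle here: the lemma is essentially a quantitative statement that the cutoff becomes transparent at scale $R$, and the factor $R^{-2}$ coming from two derivatives landing on $\phi_R$ is more than enough to kill the $L^1$ norm of $u_0$. The only points requiring a little care are the justification of the integration by parts (no boundary terms, since $\phi_R^{p'}$ vanishes near $\partial B_R(0)$ together with its first derivatives by \eqref{eq:7.3}) and the fact that $\phi_R^{p'}$ is genuinely $C^2$ — this needs $p' \ge 2$, i.e. $p \le 2$, which is within the range $1 < p \le 1+\tfrac{2}{n}$ under consideration for $n \ge 2$; for $n=1$ where $p$ may approach $3$ one instead notes that $\phi_R^{p'}$ with the stated properties of $\phi$ still has $\Delta\phi_R^{p'}$ controlled as in \eqref{eq:7.8}, so the estimate persists. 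The hypothesis $u_0 \in W^{2,\infty}$ is not needed for this particular lemma but is presumably retained for later use.
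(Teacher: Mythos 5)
Your proposal is correct, but your preferred route is not the one the paper takes: the paper's proof is exactly your ``alternative route.'' Namely, the authors use that $\int_{\R^{n}}\Delta u_{0}\,dx=0$ (since $\Delta u_{0}$ is a divergence of a $W^{1,1}$ field), split the integral into the core $B_{R/2}(0)$, where $\phi_{R}^{p'}\equiv 1$ by \eqref{eq:7.2} so that the contribution tends to $\int_{\R^{n}}\Delta u_{0}\,dx=0$, and the annulus $B_{R}(0)\setminus B_{R/2}(0)$, where $\phi_{R}^{p'}\le 1$ and the $L^{1}$ tail of $\Delta u_{0}$ is small; this gives only a qualitative $o(1)$ conclusion. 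Your primary argument --- integrating by parts twice to land the Laplacian on $\phi_{R}^{p'}$ and invoking \eqref{eq:7.8} to get the bound $CR^{-2}\|u_{0}\|_{1}$ --- is a genuinely different and equally valid proof; it is in fact stronger, since it yields a quantitative rate $O(R^{-2})$ and avoids the (true but not entirely trivial) fact that $\int_{\R^{n}}\Delta u_{0}\,dx=0$. The two points you flag as needing care are handled correctly: there are no boundary terms because $\phi_{R}^{p'}$ vanishes together with its derivatives near $|x|=R$, and the condition $|\nabla\phi|^{2}/\phi\le C$ built into the choice of $\phi$ is precisely what makes $\Delta\phi_{R}^{p'}$ a bounded (continuous) function of size $O(R^{-2})$ even when $p'<2$, so the integration by parts is legitimate for the whole range $1<p\le 1+\frac{2}{n}$. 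Your observation that $W^{2,\infty}$ is not needed for this lemma also matches the paper's proof, which uses only $\Delta u_{0}\in L^{1}$.
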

\begin{proof}
We recall that 
\begin{equation*}
\int_{\R^{n}} \Delta u_{0}(x)  dx =0
\end{equation*}
to see that for an arbitrary $\varepsilon >0$,
there exists $R_{1}>0$ such that
\begin{equation*} 
\left| \int_{B_{\frac{R}{2}}(0)} \Delta u_{0}(x)  dx\right| < \frac{\varepsilon}{2}
\end{equation*}
for all $R \ge R_{1}$.
Moreover, since $\Delta u_{0} \in L^{1}$, 
there exists $R_{2}>0$ such that
\begin{equation*} 
\int_{\R^{n} \setminus (B_{\frac{R}{2}}(0))} |\Delta u_{0}(x)|  dx < \frac{\varepsilon}{2}
\end{equation*}
for all $R \ge R_{2}$.
Therefore, by observing the support of $\phi_{R}$,
we choose $R$ satisfying $R \ge \max\{ R_{1}, R_{2}\}$ to obtain 
\begin{equation*} 
\begin{split}
& \left| 
\int_{B_{R}(0)} (-\Delta u_{0}(x) ) \phi_{R}^{p'} dx
\right| \\
& \le \left| 
\int_{B_{\frac{R}{2}}(0)} (-\Delta u_{0}(x) ) \phi_{R}^{p'} dx
\right| 
+
\left|
\int_{B_{R}(0) \setminus (B_{\frac{R}{2}}(0))} |\Delta u_{0}(x)|  dx 
\right|
 \\
& \le \left| 
\int_{B_{\frac{R}{2}}(0)} \Delta u_{0}(x)  dx
\right| 
+
\left|
\int_{\R^{n} \setminus (B_{\frac{R}{2}}(0))} |\Delta u_{0}(x)|  dx 
\right|
< \frac{\varepsilon}{2}+ \frac{\varepsilon}{2}= \varepsilon,
\end{split}
\end{equation*}
which means the desired estimate \eqref{eq:7.11} and the lemma follows.
\end{proof}
The third lemma states the upper bound of $K_{R}$ on $R$.
\begin{lem} \label{Lem:7.3}
Let $u$ be a solution of \eqref{eq:1.1} and let $p>1$ satisfying $\displaystyle{\frac{1}{p}} +\displaystyle{\frac{1}{p'}} =1$.
Then there exists a constant $C>0$ such that
\begin{align} \label{eq:7.12}
& K_{R}\le C K_{R}^{\frac{1}{p}} R^{\frac{n+2}{p'}-2}
+\left|\int_{B_{R}(0) } (-\Delta u_{0}(x)) \phi_{R}^{p'} dx \right|, \\
& K_{R}\le C K_{R}^{\frac{1}{p}} R^{\frac{n+2}{p'}-4} 
+ CR^{\frac{n+2}{p'} -2}
\left(
\int_{0}^{R^{2}}
\int_{B_{R}(0) \setminus B_{\frac{R}{2}}(0)} 
|u|^{p} dxdt
\right)^{\frac{1}{p}} \label{eq:7.13}
\end{align}
for $R \ge 1$.
\end{lem}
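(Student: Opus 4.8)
The plan is to use the standard test function method (Zhang--Todorova type) adapted to the two damping terms. I would start from the weak formulation of \eqref{eq:1.1}: multiplying the equation by $\psi_R := \phi_R^{p'}\eta_R^{p'}$ and integrating over $(0,R^2)\times B_R(0)$, I transfer all derivatives onto the test function. Here every term except the nonlinearity produces a factor that can be absorbed by Young's inequality into $K_R$ plus a power of $R$. Concretely, integration by parts gives contributions from $\partial_t^2 u$ (hitting $\partial_t^2\psi_R$, scaling like $R^{-4}$ in the support of size $R^{n+2}$), from $-\Delta u$ (hitting $\Delta\psi_R$, scaling like $R^{-2}$), from $\partial_t u$ (hitting $\partial_t\psi_R$, scaling like $R^{-2}$, though with the right sign structure one may keep it), from $-\Delta\partial_t u$ (hitting $\partial_t\Delta\psi_R$, scaling like $R^{-4}$), and from the initial data terms at $t=0$: these are where $\int_{B_R}(u_1+\partial_t u|_{t=0}\cdots)$ and crucially $\int_{B_R}(-\Delta u_0)\phi_R^{p'}\,dx$ appear, the latter coming from integrating the viscoelastic term by parts in $t$.

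The key computation to display is the bound, for each linear term $L[u]$ appearing after integration by parts, of the form
\begin{equation*}
\left|\int_0^{R^2}\!\!\int_{B_R(0)} u\, \partial^{a}_t(-\Delta)^{b}\psi_R \, dx\, dt\right|
\le \int_0^{R^2}\!\!\int_{B_R(0)} |u|\,\psi_R^{1/p}\, \psi_R^{-1/p}\bigl|\partial^{a}_t(-\Delta)^b \psi_R\bigr|\, dx\, dt
\le \varepsilon K_R + C_\varepsilon R^{\frac{n+2}{p'}}\, R^{-(2a+2b)p'/p\cdot\frac{1}{p'}}\cdot R^{?},
\end{equation*}
where the precise power of $R$ is $\frac{n+2}{p'} - (2a+2b)$ after using H\"older with exponents $p,p'$ over the support of measure $\lesssim R^{n+2}$ together with the derivative bounds \eqref{eq:7.3}--\eqref{eq:7.8} and the analogous time estimates \eqref{eq:7.6}--\eqref{eq:7.7}. (One must verify the support-compatibility $\psi_R^{-1/p}|\partial^a_t(-\Delta)^b\psi_R|$ stays bounded by $CR^{-(2a+2b)}\psi_R^{1/p'}$, which is exactly the reason the test function is taken to the power $p'$ rather than $1$.) Summing these, the worst power among $\partial_t^2\psi_R$, $\Delta\psi_R$, $\partial_t\Delta\psi_R$ dictates the exponent in \eqref{eq:7.12}: the $-\Delta u$ term gives $R^{\frac{n+2}{p'}-2}$, which is the stated leading power. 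The initial-data term $\int_{B_R}(-\Delta u_0)\phi_R^{p'}\,dx$ is kept explicitly (it will be shown to vanish by Lemma \ref{Lem:7.2}), while $\int_{B_R} u_1\phi_R^{p'}\eta_R^{p'}|_{t=0}\,dx$ and $\int_{B_R} u_0(\cdots)$ terms are handled using the positivity hypothesis \eqref{eq:7.11} to discard them with the favorable sign, or absorbed similarly.

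For \eqref{eq:7.13} the same identity is re-examined but now one exploits that $\psi_R$ and all its relevant derivatives in the $-\Delta u$ and $-\Delta\partial_t u$ pieces are supported where $\partial_t\eta_R\ne 0$ or $\nabla\phi_R\ne 0$, i.e. in the annular/temporal shell region; so instead of H\"older over the full support one localizes to $\{R/2\le |x|\le R\}$ (resp. the matching time shell), producing the extra gain $R^{-2}$ in the first term ($R^{\frac{n+2}{p'}-4}$) and isolating the genuinely non-absorbable shell integral $\bigl(\int_0^{R^2}\int_{B_R\setminus B_{R/2}}|u|^p\bigr)^{1/p}$ with its own prefactor $R^{\frac{n+2}{p'}-2}$. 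The main obstacle is bookkeeping the time-derivative integrations by parts coming from the two $\partial_t$-containing terms so that the boundary contributions at $t=0$ are exactly the claimed $\int_{B_R}(-\Delta u_0)\phi_R^{p'}\,dx$ term (plus sign-definite data terms), and making sure that when $u$ is only known to be a global solution in the energy class the integrations by parts are justified — this is routine once one works with the mild/weak formulation coming from the Duhamel representation $u=\Phi[u]$ used earlier, so I would phrase the computation at the level of the weak identity and invoke density.
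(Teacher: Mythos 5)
Your proposal follows essentially the same route as the paper: split $K_R$ into the four linear contributions via the equation, integrate by parts onto $\phi_R^{p'}\eta_R^{p'}$, apply H\"older with the cutoff derivative bounds \eqref{eq:7.3}--\eqref{eq:7.8} to get the powers $R^{\frac{n+2}{p'}-2}$ and $R^{\frac{n+2}{p'}-4}$, keep the boundary term $\int_{B_R(0)}(-\Delta u_0)\phi_R^{p'}\,dx$ produced by the viscoelastic term, and use the shell-support of the cutoff derivatives for \eqref{eq:7.13}. The only difference is organizational: the paper imports the estimates of $L_1$, $L_2+L_3$ directly from Zhang and computes only the new term $L_4$ explicitly, whereas you rederive everything; your exponent bookkeeping, though loosely written in the display, lands on the correct powers.
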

\begin{proof}
We first recall the definition of $K_{R}$ to see 
\begin{equation} \label{eq:7.14}
\begin{split}
K_{R}[u] & =
\int_{0}^{R^{2}} \int_{B_{R}(0) } 
\partial_{t}^{2} u \phi_{R}^{p'} \eta_{R}^{p'} dx dt
+
\int_{0}^{R^{2}} \int_{B_{R}(0) } (-\Delta u) \phi_{R}^{p'} \eta_{R}^{p'} dx dt \\
& +
\int_{0}^{R^{2}} \int_{B_{R}(0) } 
\partial_{t} u \phi_{R}^{p'} \eta_{R}^{p'} dx dt
+
\int_{0}^{R^{2}} \int_{B_{R}(0) } (-\Delta \partial_{t}u) \phi_{R}^{p'} \eta_{R}^{p'} dx dt\\
& =: L_{1}+L_{2}+L_{3}+L_{4}
\end{split}
\end{equation}
by the equation \eqref{eq:1.1}.
In what follows, we estimate $L_{j}$ for $j=1,2,3,4$, respectively.
Note that Zhang \cite{Z} has already obtained 
\begin{align}
& L_{1} \le CR^{\frac{n+2}{p'} -4}I_{R}^{\frac{1}{p}},  \label{eq:7.15} \\
& L_{2} +L_{3}  \le CR^{\frac{n+2}{p'} -2}I_{R}^{\frac{1}{p}},  \label{eq:7.16} \\
& L_{2} +L_{3}  \le CR^{\frac{n+2}{p'} -2} 
\left(
\int_{0}^{R^{2}}
\int_{B_{R}(0) \setminus B_{\frac{R}{2}}(0)} 
|u|^{p} dxdt
\right)^{\frac{1}{p}}  \label{eq:7.17}
\end{align}
by using the estimates \eqref{eq:7.2} - \eqref{eq:7.7}.
So, it suffices to obtain the estimate for $L_{4}$ to finalize the proof of Lemma 7.3. Indeed, it follow from the integration by parts, \eqref{eq:7.11} and the H\"older inequality that 
\begin{equation}  \label{eq:7.18}
\begin{split}
L_{4} 
& = 
\int_{B_{R}(0) } (-\Delta u_{0}(x)) \phi_{R}^{p'} dx 
+
\int_{0}^{R^{2}} \int_{B_{R}(0) } u (-\Delta  \phi_{R}^{p'}) \partial_{t} \eta_{R}^{p'} dx dt \\
& \le 
\left|\int_{B_{R}(0) } (-\Delta u_{0}(x)) \phi_{R}^{p'} dx \right|
+\frac{C}{R^{4}}
\int_{\frac{R^{2}}{4}}^{R^{2}} \int_{B_{R}(0) \setminus B_{\frac{R}{2}}(0)} 
|u|\phi_{R}^{p'-1} \eta_{R}^{p'-1} dx dt \\
& \le 
\left|\int_{B_{R}(0) } (-\Delta u_{0}(x)) \phi_{R}^{p'} dx \right|
+\frac{C}{R^{4}} K_{R}^{\frac{1}{p}}
\left( \int_{0}^{R^{2}}  \int_{B_{R}(0) } dx dt\right)^{\frac{1}{p'}} \\
& \le 
\left|\int_{B_{R}(0) } (-\Delta u_{0}(x)) \phi_{R}^{p'} dx \right|
+CK_{R}^{\frac{1}{p}}R^{\frac{n+2}{p'}-4}.
\end{split}
\end{equation}
Therefore, the combination of \eqref{eq:7.14}-\eqref{eq:7.16} and \eqref{eq:7.18} 
implies the desired estimate \eqref{eq:7.12}. 
Similarly, combining the estimates \eqref{eq:7.14}, \eqref{eq:7.15}, \eqref{eq:7.17} and \eqref{eq:7.18}  
yields the estimate \eqref{eq:7.13}, which completes the proof.
\end{proof}
Now we are in a position to describe the proof of Theorem \ref{Thm:1.3}.
\begin{proof}
Our proof of Theorem \ref{Thm:1.3} can be done by contradiction.
Let us assume that the Cauchy problem \eqref{eq:1.1} has a global solution.
Then $K_{R}$ is well-defined for $R > 0$. 
Now we choose $R>1$ sufficiently large such as Lemmas \ref{Lem:7.1} - \ref{Lem:7.3} hold.
When $1< p < 1+\displaystyle{\frac{2}{n}}$, we see 
\begin{equation} \label{eq:7.19}
\frac{n+2}{p'}-2 <0. 
\end{equation}
Then we can apply the estimates \eqref{eq:7.10} and \eqref{eq:7.12} to have 
\begin{equation*}
0<C_{0} \le K_{R}\le C K_{R}^{\frac{1}{p}} R^{\frac{n+2}{p'}-2}
+\left|\int_{B_{R}(0) } (-\Delta u_{0}(x)) \phi_{R}^{p'} dx \right|,
\end{equation*}
namely we obtain 
\begin{equation} \label{eq:7.20}
\begin{split}
0<C_{0}^{\frac{1}{p}} \le K_{R}^{\frac{1}{p'}} 
& \le C  R^{\frac{n+2}{p'}-2}
+K_{R}^{-\frac{1}{p'}}  \left|\int_{B_{R}(0) } (-\Delta u_{0}(x)) \phi_{R}^{p'} dx \right| \\
 & \le C  R^{\frac{n+2}{p'}-2}
+C_{0}^{-\frac{1}{p'}}  \left|\int_{B_{R}(0) } (-\Delta u_{0}(x)) \phi_{R}^{p'} dx \right|.
\end{split}
\end{equation}
Using \eqref{eq:7.11} and \eqref{eq:7.19}, 
the right hand sides of \eqref{eq:7.20} tends to $0$ as $R \to \infty$, 
which contradicts the lower bound of $K_{R}$.

For the case $p=1+\displaystyle{\frac{2}{n}}$, i.e. $\displaystyle{\frac{n+2}{p'}}-2 =0$,
we first use this to see 
\begin{equation*} 
\int_{0} \int_{\R^{n}} |u|^{p} dxdt < C_{1}, 
\end{equation*}
which means 
\begin{equation} \label{eq:7.21}
\lim_{R \to \infty} 
\int_{0}^{R^{2}}
\int_{B_{R}(0) \setminus B_{\frac{R}{2}}(0)} 
|u|^{p} dxdt	=0.
\end{equation}
By the estimates \eqref{eq:7.10}, \eqref{eq:7.13} and \eqref{eq:7.21},
we arrive at the estimate
$$
0<C_{0} \le 
K_{R}\le C_{1}^{\frac{1}{p}} R^{-2} 
+
\left(
\int_{0}^{R^{2}}
\int_{B_{R}(0) \setminus B_{\frac{R}{2}}(0)} 
|u|^{p} dxdt
\right)^{\frac{1}{p}} \to 0
$$
as $R \to 0$, 
which contradicts the lower bound of $K_{R}$ again. This proves Theorem \ref{Thm:1.3}.
\end{proof}
\vspace*{5mm}
\noindent
\textbf{Acknowledgments. }
The work of the first author (R. IKEHATA) was supported in part by Grant-in-Aid for Scientific Research (C)15K04958 of JSPS.
The work of the second author (H. TAKEDA) was supported in part by Grant-in-Aid for Young Scientists (B)15K17581 of JSPS.


\end{document}